\numberwithin{equation}{section}
\theoremstyle{plain}  % default setting: bold heading and italic body
\newtheorem{thm}{Theorem}[section]
\newtheorem{conj}[thm]{Conjecture}
\newtheorem{cor}[thm]{Corollary}
\newtheorem{prop}[thm]{Proposition}
\theoremstyle{definition} % bold heading and normal body
\newtheorem{defn}{Definition}[section]
\newtheorem{examp}[defn]{Example}
\theoremstyle{remark} % italic heading and normal body
\newtheorem{rem}{Remark}[section]
\begin{document}

\title{The Jones polynomial and related properties of some twisted links}

\author{David Emmes}
\address{Unaffiliated}
\email{davidemmes@gmail.com}

\keywords{Twisted links, Lorenz links, Jones polynomial, T-links, skein relation, braids and braid groups}
\subjclass[2000]{57M25, 57M27}

\begin{abstract}
Twisted links are obtained from a base link by starting with a
$n$-braid representation, choosing several ($m$) adjacent strands, and
applying one or more twists to the set. Various restrictions may be 
applied, e.g. the twists may be required to be positive or full twists, or the base
braid may be required to have a certain form.

The Jones polynomial of full $m$-twisted links have some interesting properties.
It is known that when sufficiently many full $m$-twists are added that the 
coefficients break up into disjoint blocks which are independent of the 
number of full twists. These blocks are separated by 
constants which alternate in sign. Other features are known.
This paper presents the value of these constants when two strands 
of a three-braid are twisted. 
It also discloses when this pattern emerges for either two or three strand twisting
of a three-braid, along with other properties.

Lorenz links and the equivalent T-links are positively twisted links of
a special form. 
This paper presents the Jones polynomial for such links which have
braid index three.
Some families of braid representations whose closures are identical
links are given.
\end{abstract}

\maketitle

%% main text
\section{Introduction}
The primary purpose of this paper is to display the Jones polynomial for various twisted 
three-braid links and to describe properties of the Jones polynomial for positive three-braids
(Section~\ref{SectionJones3br}). 
These include the family of Lorenz links of
braid index three, $\mathcal{L}_3$. 
In Section~\ref{SectionTlinks} we show exactly when two prototypical braid word representatives 
for $\mathcal{L}_3$ generate the same link
in terms of the braid words themselves. 
A simple unique three-braid representation for links in $\mathcal{L}_3$ is given.
This extends the knot classification result by R.~Bedient, \cite{4}.
For these Lorenz links of braid index (two or) three, the Jones polynomial has the interesting property that
two links are equal exactly when their Jones polynomials are equal.

A second objective of this paper is to present some braid properties of twisted n-braids and show
how these lead to certain symmetries. 
Any pair of n-braid words exhibiting one of these symmetries generates the same link.
These are described in Section~\ref{sectionBraidPropsTlinks} along with their
application to Lorenz links.

Lorenz links were originally investigated in \cite{14}. More recent work by J.~Birman and I.~Kofman, \cite{12},
shows there is a \mbox{one-to-one correspondance} with \mbox{T-links}, which have
a simple braid word description (Section~\ref{ssNotation}).
Section~\ref{SectionTlinks} describes each family of \mbox{T-links} of braid index three.
Thm.~\ref{thmTlinkbr3} shows that the family of three-braid links,
$ \widehat{\sigma_{1}^{x} \sigma_{2}^{y} [1,3]^z} $ with 
$x,y \geq 0$, $z \geq 3$ and $z \equiv 0 \mbox{ mod } 3$
coincides with $\mathcal{L}_3$.
The Jones polynomial for links in $\mathcal{L}_3$ is displayed in 
Cor.~\ref{CorJones3brfulltwists}.

A.~Champanerkar and I.~Kofman have studied the effect on the Jones polynomial
of twisting $m$ strands in a $n$-braid representation of a link, \cite{22}.
The coefficient vector for the Jones polynomial
starts with the coefficient of the term with minimal degree.
Theorem~3.1, \cite{22}, relates that when a set of $m$ strands are given sufficiently many full twists,
the coefficient vector for the Jones polynomial has 
$\lfloor m/2 \rfloor +1$ blocks of fixed length; for a given link the blocks may have different lengths.
The values of the coefficients in each such block is independent of the number of full twists,
and depends only on the base link. 
When $m$ is odd, these blocks are separated by zeros.
When $m$ is even, these blocks are either separated by zeros or by 
a pattern of alternating positive and negative numbers all with the same absolute value.
Furthermore, the length of the inter-block sections increases at a rate proportional to the number of full twists.

These results of A.~Champanerkar and I.~Kofman are given an explicit form for
twisted three-braids and in particular for $\mathcal{L}_3$ (Section~\ref{SectionJones3br}).

\subsection{Background, Notation and Conventions}
\label{ssNotation}

The braid group on $n$ strands is designated $B_n$ and has
$n-1$ standard generators, $\sigma_i$.
The notation $\beta \cong \gamma$, for braid words, $\beta, \gamma$, means that 
$\beta$ and $\gamma$ are conjugate. 
The exponent sum, or writhe, of a braid word, $\beta$, is denoted $w(\beta)$.
The link associated with the 
standard closure of a braid, $\beta$, is denoted $\widehat{\beta}$.
The braid index of a link, $L$ is denoted $b(L)$ and the mirror image of $L$ is denoted $ \overline{L} $.

Braid reflection refers to the replacement of $\sigma_i$ by $\sigma_{n-i}$ in each
instance of a $n$-braid word, $\beta$, for all i, $1 \leq i <n$, yielding a new 
$n$-braid word, $\beta^*$. A very useful result,
\cite{7}, \cite{75}, \cite{10} is that a braid word and its reflection are conjugate, $\beta \cong \beta^*$.

The convenient notation ${[a,b]}= \sigma_a \cdots \sigma_{b-1}$ for $a<b$
and ${[a,b]}= \sigma_{a-1} \cdots \sigma_{b}$ for $a > b$ is borrowed from \cite{12}.
We adopt the convention that ${[a,a]}= 1$.
A typical representation for a \mbox{T-link} is $T((r_1,s_1),\ldots,(r_k,s_k))$, 
with $2 \leq r_1$ and $r_i \leq r_{i+1}$ and $0<s_i$ for all $i$, \cite{12}.
As noted in \cite{12}, we may also assume $s_k \geq 2$.
In this paper, when $r_i < r_{i+1}$ for all $i$,
the parameter $k$ is said to refer to the number of \textit{tiers}. 
A braid representation for $T((r_1,s_1),\ldots,(r_k,s_k))$ is 
${[1,r_1]}^{s_1} \ldots {[1,r_k]}^{s_k}$.

The term \textit{duality} is used to refer to Cor.~4 \cite{12} (general case) or Ex.~3 \cite{12} for
the two-tier case, wherein two specific \mbox{T-links} are shown to represent the same link. These are
$T((r_1,s_1),\ldots,(r_k,s_k))$ and 
$T((\overline{r_1},\overline{s_1}),\ldots,(\overline{r_k},\overline{s_k}) )$, with 
$\overline{r_i} = \sum_{k+1-i}^{k} s_j$, and
$\overline{s_i} = r_{k+1-i} - r_{k-i}$ with the convention  $r_0=0$.
In this context, torus links are just \mbox{T-links} with a single tier.
The elementary torus links are those with two strands, i.e. $r_1=2$, and are
designated $T_{s}=T(2,s)$.

The notation, ${[n]}_{m}$, for positive integers $n, m$ refers to the remainder when $n$ is divided by $m$;
hence $n = m \lfloor n/m \rfloor + {[n]}_{m}$. 
\textit{Parity} refers to ${[n]}_{2}$.
Some convenient abbreviations for this work are to use $G=1+t+t^2$ and $\epsilon_{x} = (-1)^x$.
Based on some prior usage, \cite{105}, when $f \in \mathbb{Z}[t]$,
let $[f]_{a}$ be the coefficient of $t^a$, with $[f]_{\max}$ 
the leading coefficient; $[f]_{\max}=0$ when $f=0$.
An AC (alternating coefficient) polynomial is non-zero and satisfies
$[f]_{j}[f]_{j+1}<0$ for $\min \deg f \leq j < \deg f$.
The Kronecker delta, $\delta_{i,j}$, is one when $i=j$ is true,
and zero otherwise.

The skein relation for the \mbox{Homflypt} polynomial is defined by (\ref{Hskeinrel}), where 
the diagrams, $D_+$, $D_0$, and $D_-$ below refer to the usual diagrams for the
link with positive crossing, null (smoothed) crossing, and negative crossing, respectively.
The standard name for the \mbox{Homflypt} polynomial of an oriented link, $L$, is $P_L$ or $P_L(v,z)$;
for the Jones polynomial $V_L$ or $V_L(t) = P_{L}(t, (t-1)/\sqrt{t})$; 
for the Alexander polynomial, $\Delta_L$ or $\Delta_L(t)= P_{L}(1, (t-1)/\sqrt{t})$; 
for the Conway polynomial, $\nabla_L$ or $\nabla_L(z)= P_{L}(1, z)$.

\noindent
\begin{equation}
P_{D_+}(v,z) =  vz P_{D_0}(v,z) + v^{2} P_{D_-}(v,z).
\label{Hskeinrel}
\end{equation}

We may interpret $A_w=A_w(t)=(t^w+ \epsilon_{w-1})/(t+1)$ as another version of the
Alexander polynomial for $T_w$, and when $w$ is positive, $A_w= \sum_{j=0}^{w -1} \epsilon_j t^{w -1-j}$.
For negative $w$, we have $A_w = \epsilon_{w-1} t^w A_{|w|}$.
Finally, $A_w(1/t)=\epsilon_{w-1} t^{1-w}A_w(t)$.
Many of the results involve manipulations of this simple AC polynomial.
In particular we have for any integers $w,z$ and $x \geq y \geq 0$:

\begin{eqnarray}
A_{w} &=& t^{z} A_{w-z} + \epsilon_{w-z} A_{z}, \label{ARecurwz} \\
%
%A_w &=& t^{w-1}  - A_{w-1}, \label{ARecurwzis1} \\
%
A_{w+z-1} &=& A_{w} A_{z} + t A_{w-1} A_{z-1} \label{AIndexSumAsSumofProd} \\
A_x A_y  
&=& \sum_{j=0}^{y -2} \epsilon_j (j+1) t^{x+y -2-j}
+ (y) \sum_{j=y-1}^{x-1} \epsilon_j  t^{x+y -2-j} \nonumber  \\
&&+ \sum_{j=x}^{x+y -2} \epsilon_j (x+y -1-j) t^{x+y -2-j}. \label{AProductasSum}
\end{eqnarray}

The usual conventions that $\sum_{j=a}^{b} f_j = 
\left( \begin{array} 
{cc} b \\ a
\end{array} \right) = 0$ when $a>b$ are followed.
Also $\prod_{j=a}^{b} f_j = 1$ when $a > b$.
For convenience, the following functions are defined:

\begin{eqnarray*}
L(b) &=& \sum_{j=0}^{b} \epsilon_j (1+b-j) t^{b-j} = \epsilon_b \sum_{j=0}^{b} \epsilon_j (1+j) t^{j}, \\
R(b) &=& \sum_{j=0}^{b} \epsilon_j (j+1) t^{b-j} = \epsilon_b \sum_{j=0}^{b} \epsilon_j (1+b-j) t^{j}. 
\end{eqnarray*}

These may also be expressed as
\begin{eqnarray*}
L(b) &=& \sum_{j=0}^{b} t^{j} A_{b+1-j} = \sum_{j=0}^{b} t^{b-j} A_{j+1}, \\
R(b) &=& \sum_{j=0}^{b} \epsilon_j A_{b+1-j} = \sum_{j=0}^{b} \epsilon_{b-j} A_{j+1}. 
\end{eqnarray*}

These expressions imply 
$L(b) = A_{b+1} + t L(b-1)$ and
$R(b) = A_{b+1} - R(b-1)$.

Hence for $x \geq y \geq 0$ we have the expression (\ref{AProductUsingL}) for $A_x A_y$ 
which partitions the coefficients and terms into three disjoint sets. 
The absolute value of the  coefficients are monotonically increasing for the first $y-1$ terms, 
constant and maximal for the next $x-y+1$ terms, 
and monotonically decreasing for the final $y-1$ terms.
\begin{eqnarray}
A_x A_y  
&=& \epsilon_x  L( y-2 )  - (y) \epsilon_y t^{y-1} A_{x-y+1} +  t^x R(y-2). \label{AProductUsingL}
\end{eqnarray}

The properties just described and reflected in (\ref{AProductUsingL}) are hereditary, as described 
in the next proposition and its corollary. These results are central to the remainder of the paper.

\begin{prop}
Suppose $f, g $ are AC polynomials of degree $d$ with coefficients, $a_j, b_j$ for $t^j$.
Suppose $\Lambda = f + \lambda  t^{d+1} A_{r} + t^{d+r+1} g$ is an AC polynomial with $r>0$, 
and $|a_j| < |a_{j+1}|< |\lambda|$, and $|\lambda| > |b_j| > |b_{j+1}|$ for all $j$.

It follows that $ \Lambda A_y$ is an AC polynomial with the same properties as $\Lambda$ when $1 \leq y \leq r$. 
In particular,
$\Lambda A_y = f_{y}+ \lambda_{y} t^{d+y} A_{r-y+1} +  t^{d+r+1} g_{y}$. 
Here $f_{y}, g_{y}$ are AC polynomials of degree $d+y-1$ 
and $\lambda_{y} = -\epsilon_y (y)\lambda  $.

The absolute values of the coefficients for $f_{y}, g_{y}$ are respectively monotonically increasing, decreasing; and
less than $|\lambda_{y}|$.
Furthermore, the coefficients for $f_{y}$ are dependent on the parity of $r$, but not on the actual value for $r$;
and the coefficients for $g_{y}$ are independent of $r$.
\label{propAlexander}
\end{prop}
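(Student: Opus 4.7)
The plan is to apply identity (\ref{AProductUsingL}) directly. Since $r\geq y\geq 1$, writing
\[ A_r A_y = \epsilon_r L(y-2) - y\epsilon_y t^{y-1} A_{r-y+1} + t^r R(y-2) \]
and substituting into $\Lambda A_y = fA_y + \lambda t^{d+1}A_r A_y + t^{d+r+1}gA_y$, the middle piece becomes $-y\epsilon_y\lambda\, t^{d+y}A_{r-y+1}$, already of the required form with $\lambda_y = -\epsilon_y y\lambda$. The remaining terms group as
\[ f_y := f A_y + \lambda\epsilon_r t^{d+1} L(y-2), \qquad g_y := \lambda R(y-2) + g A_y, \]
so that $\Lambda A_y = f_y + \lambda_y t^{d+y}A_{r-y+1} + t^{d+r+1} g_y$. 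Both $f_y$ and $g_y$ have degree $d+y-1$, since $fA_y$, $t^{d+1}L(y-2)$, and $gA_y$ all attain exactly that degree. The $r$-dependence assertions are then immediate from these formulas: $f_y$ involves $r$ only through the scalar $\epsilon_r$, and $g_y$ is free of $r$.

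For the monotonicity and AC structure of $f_y$, I would expand $[fA_y]_n = \sum_i \epsilon_{y-1-i}a_{n-i}$. The AC hypothesis on $f$ gives $\mathrm{sign}(a_j) = \mathrm{sign}(a_d)\epsilon_{d-j}$, so every summand carries the common sign $\mathrm{sign}(a_d)\epsilon_{y+d-n-1}$ independently of $i$, whence $|[fA_y]_n|$ is simply the sum of $|a_j|$ over the active window of $y$ consecutive indices. For $n\leq d$ the window shifts rightward one step at a time, so monotonicity follows from the strict increase of the $|a_j|$; $L(y-2)$ contributes nothing in this range. For $d+1\leq n\leq d+y-1$ the window shrinks from the right, but the correction $\lambda\epsilon_r [L(y-2)]_{n-d-1}$ kicks in. Crucially, the AC hypothesis on $\Lambda$ at the junction $t^d\to t^{d+1}$ forces $\mathrm{sign}(a_d) = \mathrm{sign}(\lambda\epsilon_r)$, so this correction adds constructively to $[fA_y]_n$, contributing $(n-d)|\lambda|$ to the magnitude, which overwhelms the dropped term of size $|a_{n-y+1}|<|\lambda|$ and keeps $|[f_y]_n|$ strictly increasing. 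At the top, $|[f_y]_{d+y-1}| = |a_d|+(y-1)|\lambda|<y|\lambda|=|\lambda_y|$. A symmetric argument handles $g_y$, using $\mathrm{sign}(b_0) = -\mathrm{sign}(\lambda)$ from AC of $\Lambda$ at $t^{d+r}\to t^{d+r+1}$.

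AC of the assembled polynomial $\Lambda A_y$ then reduces to two sign checks at the internal junctions $t^{d+y-1}\to t^{d+y}$ and $t^{d+r}\to t^{d+r+1}$. Direct computation yields $[\lambda_y t^{d+y}A_{r-y+1}]_{d+y} = \lambda_y\epsilon_{r-y} = -y\lambda\epsilon_r$ (using $\epsilon_y\epsilon_{r-y}=\epsilon_r$) and $[\lambda_y t^{d+y}A_{r-y+1}]_{d+r} = \lambda_y = -\epsilon_y y\lambda$, both carrying the sign opposite to the boundary coefficients of $f_y$ and $g_y$ computed above. The hardest part will be the monotonicity and sign tracking for $f_y$ in the upper range (symmetrically, the lower range of $g_y$), where the shrinking $fA_y$ window must be balanced against the linearly growing $L(y-2)$ correction. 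Once the sign identity $\mathrm{sign}(a_d)=\mathrm{sign}(\lambda\epsilon_r)$ is pinned down from the AC hypothesis on $\Lambda$, the rest is routine bookkeeping.
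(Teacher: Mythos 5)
Your decomposition $f_y = fA_y + \lambda\epsilon_r t^{d+1}L(y-2)$ and $g_y = \lambda R(y-2) + gA_y$ obtained from (\ref{AProductUsingL}) is exactly the paper's proof, which consists of stating these two formulas and asserting that "these functions have the properties claimed." Your additional bookkeeping -- the common-sign computation for the windowed sums, the junction identities $\mathrm{sign}(a_d)=\mathrm{sign}(\lambda\epsilon_r)$ and $\mathrm{sign}(b_0)=-\mathrm{sign}(\lambda)$, and the bound $|a_d|+(y-1)|\lambda|<y|\lambda|$ -- correctly fills in what the paper leaves implicit (modulo the harmless slip that the active window for $[fA_y]_n$ with $n>d$ loses its smallest-index term $a_{n-y+1}$, i.e.\ shrinks from the left, as your own formula shows).
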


\begin{proof}
%\begin{eqnarray*}
%\Lambda A_y &=& f A_y + \lambda t^{d+1} A_{r} A_y + t^{d+r+1} g A_y   \\
%  A_x A_y =\epsilon_x  L( y-2 )  - (y) \epsilon_y t^{y-1} A_{x-y+1} +  t^x R(y-2)
%&=&  f A_y + \lambda t^{d+1} (\epsilon_r  L( y-2 )  - (y) \epsilon_y t^{y-1} A_{r-y+1} +  t^r R(y-2)) + t^{d+r+1} g A_y  \\ 
%
%&=&  f A_y +  \lambda t^{d+1} \epsilon_r  L( y-2 )  - (y)\lambda t^{d+y} \epsilon_y A_{r-y+1} 
%+  \lambda t^{d+r+1} R(y-2) + t^{d+r+1} g A_y  \\
%&& \mbox{So make the following assignments} \\ 
%
By Eq.~\ref{AProductUsingL}, we have 
$f_{y} = f A_y +  \lambda t^{d+1} \epsilon_r  L( y-2 )$ and 
$g_{y} =  \lambda   R(y-2) +  g A_y$. These functions have the properties claimed.
%\end{eqnarray*}
\end{proof}

\begin{cor}
Given a set of $m+1$ positive integers, $\{x_j\}_0^{m}$, with $m \geq 1$ and $x_0 +m-1 \geq s=\sum_{j=1}^{m}  x_j$,
the coefficients of $\prod_{j=0}^{m}  A_{x_j}$ are partitioned as in (\ref{ProdAlexander}):
\begin{eqnarray}
\prod_{j=0}^{m}  A_{x_j} &=& f + \lambda t^{s-m} A_{x_0+m-s}+ t^{x_0} g, \mbox{ with } \lambda = \epsilon_{m+s} \prod_{j=1}^{m}  x_j.
\label{ProdAlexander} %\\
% &=& \epsilon_x  L( y-2 )  - (y) \epsilon_y t^{y-1} A_{x-y+1} +  t^x R(y-2). %\label{AProductUsingL}
\end{eqnarray}
When $x_j =1$ for all $j \geq 1$, we must interpret $f,g$ as zero. Otherwise,
$f,g$ are AC polynomials of degree $s-m-1$ for which the
absolute values of the coefficients are monotonically increasing/decreasing, and
less than $|\lambda|$. 

Furthermore, the coefficients for $f$ are dependent on the parity of $x_0$, but not on the actual value for $x_0$;
and the coefficients for $g$ are independent of $x_0$.
\label{corProdAlexander}
\end{cor}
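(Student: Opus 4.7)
The plan is to prove the corollary by induction on $m$, with the base case handled by equation~(\ref{AProductUsingL}) and the inductive step driven entirely by Proposition~\ref{propAlexander}.

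For the base case $m=1$, the hypothesis $x_0 \geq x_1$ is exactly the hypothesis of (\ref{AProductUsingL}), which writes $A_{x_0}A_{x_1}$ in the desired trichotomous form. I would identify $s-m = x_1 - 1$, $x_0+m-s = x_0 - x_1 + 1$, and the middle coefficient $-\epsilon_{x_1}(x_1) = \epsilon_{1+x_1}x_1 = \epsilon_{m+s}\prod_{j=1}^{m}x_j$, and then read off $f$, $g$ (with their monotonicity and bounded-by-$|\lambda|$ behavior) from the $L$ and $R$ pieces. The degenerate case $x_1=1$ gives $f=g=0$ as the corollary allows.

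For the inductive step, assume the statement for $m-1$, so
\[
\prod_{j=0}^{m-1} A_{x_j} = f' + \lambda' t^{s'-(m-1)} A_{x_0+m-1-s'} + t^{x_0} g',
\]
with $s' = \sum_{j=1}^{m-1} x_j$ and $\lambda' = \epsilon_{(m-1)+s'}\prod_{j=1}^{m-1} x_j$. I now multiply by $A_{x_m}$ and apply Proposition~\ref{propAlexander} with $d = s'-m$, $r = x_0+m-1-s'$, $\lambda = \lambda'$, and $y = x_m$. The one nontrivial point to verify is the requirement $1 \leq y \leq r$: since $s = s' + x_m$, the inequality $x_m \leq r$ becomes $x_0 + m - 1 \geq s$, which is precisely the standing hypothesis. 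Bookkeeping on exponents then gives $d+y = s-m$, $r-y+1 = x_0+m-s$, and $d+r+1 = x_0$, matching (\ref{ProdAlexander}); and the new central coefficient is $\lambda_y = -\epsilon_{x_m}(x_m)\lambda' = \epsilon_{m+s}\prod_{j=1}^{m} x_j$, again as claimed. The proposition also delivers the AC property, monotonicity, and the bound $|f_y|,|g_y| < |\lambda_y|$ coefficient-wise for the new $f = f_y$ and $g = g_y$.

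The final claim, that the coefficients of $f$ depend on $x_0$ only through its parity while those of $g$ are independent of $x_0$, propagates automatically: Proposition~\ref{propAlexander} states this for one multiplication, and a trivial check shows that parity of $r$ at each stage is controlled by parity of $x_0$ (since the other $x_j$ are fixed across the induction). The edge case where $x_j=1$ for every $j\geq 1$ must be treated separately: then $s=m$, the product collapses to $A_{x_0}$, and one verifies by direct substitution that (\ref{ProdAlexander}) forces $f=g=0$ with $\lambda=1$, consistent with the convention.

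The main obstacle I anticipate is not conceptual but bookkeeping: keeping the three exponents $s-m$, $x_0$, and $x_0+m-s$ aligned between the inductive hypothesis and the output of Proposition~\ref{propAlexander}, and in particular verifying that the sign $\epsilon_{m+s}$ accumulates correctly across the $m$ applications. Once those indices are verified at the inductive step, every structural property of $f$ and $g$ (AC, monotonicity, bound by $|\lambda|$, and $x_0$-dependence) is inherited directly from the proposition.
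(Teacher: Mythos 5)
Your proposal is correct and follows exactly the route the paper intends (the paper gives no separate proof of the corollary, presenting it as an immediate consequence of Prop.~\ref{propAlexander} with Eq.~\ref{AProductUsingL} supplying the $m=1$ case): induction on $m$, with the index bookkeeping $d=s'-m$, $r=x_0+m-1-s'$, $y=x_m$ verified just as you describe. The only point worth a parenthetical in a final writeup is the intermediate degenerate stage where $x_1=\cdots=x_{m-1}=1$ but $x_m\geq 2$ (so $f'=g'=0$ and Prop.~\ref{propAlexander} does not literally apply because its $f,g$ must be nonzero AC polynomials); there the partial product collapses to $A_{x_0}$ and the next multiplication is again just the base case, so nothing breaks.
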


\section{The Jones and Alexander polynomials for some 3-braid links}
\label{SectionJones3br}
%%%%%%%%%%%%%% Calculating the Jones Polynomial for some simple \mbox{T-links} %%%%%%%%%%%%%%%%%%%%%%%%%%%%
%%%%%%%%%%%%%% Calculating the Jones Polynomial for some simple \mbox{T-links} %%%%%%%%%%%%%%%%%%%%%%%%%%%%

This section briefly introduces some familiar links and their classical skein polynomials and then
describes a decomposition result, Prop.~\ref{PropJones3brfulltwists},
for the Jones polynomial of three braid links that is key to the analysis.
The structure of the Jones polynomial for twisted 3-braids is then presented in
Sections~\ref{SSTwist2} and \ref{SSTwist3}. These sections describe the 
effect of twisting two strands and three strands, respectively, on the Jones polynomial.

The Jones polynomial for the elementary torus link, $T(2,w)$ is
\begin{eqnarray}
V_{T( 2,w ) } &=& -t^{(w-1)/2} ( t^{w+1} + \epsilon_{w} G)/(1+t)  \mbox{, for any } w,
\label{Vfor2br} \\
&=& -t^{(w-1)/2} (\epsilon_{w}+ t^2 A_{w-1}). \label{Vfor2brUsingA}
%
%&=& -t^{(w-1)/2} \{\epsilon_{w}+ \sum_{j=2}^{w} \epsilon_{w-j} t^{j} \} \mbox{, for } w \geq 1. 
%\label{Vfor2brSeries}
\end{eqnarray}

The Alexander polynomial for the elementary torus link, $T(2,w)$ is
\begin{eqnarray}
\Delta_{T( 2,w ) } &=& (t^w+\epsilon_{w-1})/t^{(w-1)/2}(t+1)  \mbox{, for any } w,
\label{Deltafor2br} \\
&=& t^{-(w-1)/2} A_w. \label{Deltafor2brUsingA} 
%
%&=&    \sum_{j=0}^{w-1} \epsilon_j t^{(w-1-2j)/2} \mbox{, for } w \geq 0.
%\label{Deltafor2brSeries}
\end{eqnarray}

The Jones polynomial for any three-braid knot was first given by Prop.~11.10, \cite{41}. 
The expression for any three-braid link  is given by Eq~2.6, \cite{25} and is:
\begin{equation}
V_{\widehat{\beta}}(t) = t^{(w-2)/2} \{t^{w+1} + \epsilon_w G  - G t^{w/2}\Delta_{\widehat{\beta}}(t)\},  \mbox{ with } w=w(\beta).
\label{Vfor3br}
\end{equation}

As the Jones polynomial for three-braid links is determined by the writhe and the
Alexander polynomial, which is sometimes easier to calculate, a few examples are provided. 
If we combine (\ref{Vfor3br}), with the Alexander polynomial formulas for the three braid torus links, 
Section~2.1 \cite{25}, we obtain the following
expressions for the Jones polynomials of torus links on three strands, $T(3,*)$:
\begin{eqnarray}
V_{\widehat{  {[1,3]}^{3a} }}(t) &=&  t^{3a-1} \{1+ t^{2} + 2t^{3a+1} \} , 
\label{Vfor3brTorusLink3comp} \\
%%%%%%%%
V_{\widehat{ {[1,3]}^{3a+b} }}(t) &=& t^{3a+b-1} \{ 1 + t^{2} - t^{3a+b+1} \}   , \mbox{ for } b=1,2.
\label{Vfor3brTorusKnots} 
\end{eqnarray}

Eq.~3.13, \cite{25}, gives a relation (\ref{Homflyrecur}) for the \mbox{Homflypt} polynomial for n-braid links
which provides expressions for the Alexander and Jones polynomials:
\begin{eqnarray}
P_{\widehat{\beta \sigma_i^e}} &=& 
v^{e-1}\,\nabla_{T_e}\,P_{\widehat{\beta \sigma_i}} +
v^{e}\,\nabla_{T_{e-1}}\,P_{\widehat{\beta}}  \,,   \label{Homflyrecur}  \\
\Delta_{\widehat{\beta \sigma_i^e}} &=& \Delta_{T_e}\,\Delta_{\widehat{\beta \sigma_i }} +
\Delta_{T_{e-1}}\,\Delta_{\widehat{\beta}}\,, 
\label{Conwayrecur} \\
V_{\widehat{\beta \sigma_i^e}} &=& 
t^{(e-1)/2} A_e  V_{\widehat{\beta \sigma_i}} +
t^{(e+2)/2} A_{e-1} V_{\widehat{\beta}}  \,.   \label{Jonesrecur} 
\end{eqnarray}

The following result provides a useful way of grouping the terms of the 
Jones polynomial for a special class of three-braids and may be derived
from Eq.~\ref{Jonesrecur}. 
Prop.~\ref{propRank2V3br} implies Cor.~\ref{corRank2Vpos3br} and leads to 
a description of 
the block structure of the Jones polynomial for positive three-braids 
with a highly twisted pair of strands at one site (Prop.~\ref{propVpos3brSpecial}).
\begin{prop}
Given a three-braid word, $\beta = \sigma_1^{a} \sigma_2^{b} \sigma_1^{c} \sigma_2^{d}$, and $w=w(\beta)$,

\begin{eqnarray}
V_{\widehat{\beta}} &=& t^{(w-2)/2} V_{ \widehat{ \beta } }^*, \mbox{ with } 
\label{Vstarfor3br} \\
V_{ \widehat{ \beta } }^* &=& B_{1;\beta}  + t^{a+2} B_{2;\beta} + Q_{\beta}, \nonumber \\
%%%%%%%%%%%
B_{1;\beta} &=& \epsilon_w (1 + t^2 ) +  \epsilon_{a+c}  t^{b+d+1} + \epsilon_{a} t^{c+2} A_{b } A_d  + 2\epsilon_{a+c-1}t^2 A_{b } A_d, \nonumber \\
%%%%%%%%%%%
B_{2;\beta} &=&  \epsilon_{b+d} t^{ c-1} +\epsilon_{d} A_{b}A_{c} + \epsilon_{c} A_{b} A_{d} + \epsilon_{b} A_{c} A_{d}, \nonumber \\
%%%%%%%%%%%%%
Q_{\beta} &=& \epsilon_{c+d} t^2 A_{a}  A_{b} + \epsilon_{b+c}t^2 A_{a} A_{d} + t^3 A_{a } A_{b} A_c A_{d}. \nonumber 
\end{eqnarray}

\label{propRank2V3br}
\end{prop}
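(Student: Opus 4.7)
The plan is to reduce to the Alexander polynomial and then rearrange. By Eq.~(\ref{Vfor3br}), it suffices to establish
\[
\Delta_{\widehat\beta} = t^{1-w/2}\bigl[A_{a+c}A_{b+d} + tA_aA_bA_cA_d\bigr],
\]
after which the resulting expression for $V^*_{\widehat\beta} := V_{\widehat\beta}/t^{(w-2)/2}$ can be split into $B_{1;\beta} + t^{a+2}B_{2;\beta} + Q_\beta$.

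To compute $\Delta_{\widehat\beta}$, apply Eq.~(\ref{Conwayrecur}) with $e=d$ to $\beta = \alpha\sigma_2^d$, where $\alpha = \sigma_1^a\sigma_2^b\sigma_1^c$. The term $\widehat\alpha$ is cyclically equal to $\widehat{\sigma_1^{a+c}\sigma_2^b}$, with Alexander polynomial $t^{1-(a+b+c)/2}A_{a+c}A_b$. For $\widehat{\alpha\sigma_2}$, conjugate to $\widehat{\sigma_2\sigma_1^a\sigma_2^b\sigma_1^c}$ and recurse on $c$; the boundary term $\widehat{\sigma_1\sigma_2\sigma_1^a\sigma_2^b}$ reduces, via another application of (\ref{Conwayrecur}) together with the braid relation $\sigma_1\sigma_2\sigma_1 = \sigma_2\sigma_1\sigma_2$ (which identifies the two reduced subbraids with the torus knots $T(2,a+1)$ and $T(2,a+2)$), to $t^{-(a+b)/2}A_{a+b+1}$ by Eq.~(\ref{AIndexSumAsSumofProd}). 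Substituting back and invoking (\ref{AIndexSumAsSumofProd}) twice more --- in particular the factorization $(A_{a+1}A_c + tA_aA_{c-1})(A_{b+1}A_d + tA_bA_{d-1}) = A_{a+c}A_{b+d}$ --- yields the displayed formula. Substituting into (\ref{Vfor3br}) then gives
\[
V^*_{\widehat\beta} = t^{w+1} + \epsilon_wG - tGA_{a+c}A_{b+d} - t^2GA_aA_bA_cA_d.
\]

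The final task is to rearrange this as $B_{1;\beta} + t^{a+2}B_{2;\beta} + Q_\beta$. Expand $A_{a+c} = t^cA_a + \epsilon_aA_c$ and $A_{b+d} = t^dA_b + \epsilon_bA_d$ via Eq.~(\ref{ARecurwz}); the four resulting pairwise products separate by whether they carry an $A_a$ factor. The $A_a$-bearing contributions, together with the quartic $-t^2GA_aA_bA_cA_d$, collect into $Q_\beta$ after expansion of the $G$ factors. The terms not involving $A_a$ split by the presence of a $t^a$ factor into $t^{a+2}B_{2;\beta}$ and $B_{1;\beta}$, with the pure-monomial contributions telescoping via $A_d + A_{d-1} = t^{d-1}$ and $A_d - tA_{d-1} = \epsilon_{d-1}$ (both immediate from (\ref{ARecurwz})) into $\epsilon_w(1+t^2) + \epsilon_{a+c}t^{b+d+1} + \epsilon_{b+d}t^{a+c+1}$. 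The main obstacle is this final rearrangement: each individual identity is routine, but the coordinated bookkeeping over half-a-dozen pairwise products and the sign factors $\epsilon_a, \epsilon_{a+c-1}, \epsilon_{b+d}, \ldots$ is intricate, and the cleanest verification is to expand both sides as Laurent polynomials in $t$ and match coefficients monomial by monomial.
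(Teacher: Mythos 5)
Your proof is correct in substance, but it takes a different route from the one the paper intends: the paper offers no written proof of Prop.~\ref{propRank2V3br} beyond the remark that it ``may be derived from Eq.~\ref{Jonesrecur}'', i.e.\ by running the Jones-level recursion directly on the syllables of $\beta$, whereas you use (\ref{Vfor3br}) to reduce everything to $\Delta_{\widehat{\beta}}$, compute that via (\ref{Conwayrecur}), conjugation, and the connected-sum factorization of $\widehat{\sigma_1^x\sigma_2^y}$, and only then convert back. Your intermediate formula $\Delta_{\widehat{\beta}}=t^{1-w/2}\bigl[A_{a+c}A_{b+d}+tA_aA_bA_cA_d\bigr]$ is correct (it checks against $T(2,3)$, $T(2,4)$, $T(2,5)$, and follows from (\ref{AIndexSumAsSumofProd}) exactly as you sketch), and it is a genuinely useful waypoint: it keeps the half-integer powers of $t$ and the factor $G$ out of the recursion until the very end, which the paper's route cannot do. The one soft spot is the final rearrangement, which you describe but do not execute; ``expand both sides and match coefficients monomial by monomial'' is not an argument for general exponents. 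It does go through cleanly, though, and more structurally than you suggest: since $(1+t)A_x=t^x+\epsilon_{x-1}$ by definition of $A_x$, writing $G=(1+t)^2-t$ gives $GA_xA_y=(t^x+\epsilon_{x-1})(t^y+\epsilon_{y-1})-tA_xA_y$. Applied to $-tGA_{a+c}A_{b+d}$ this cancels $t^{w+1}$, turns $\epsilon_w G$ into $\epsilon_w(1+t^2)$, and leaves $\epsilon_{a+c}t^{b+d+1}+\epsilon_{b+d}t^{a+c+1}+t^2A_{a+c}A_{b+d}$; applied to $-t^2GA_aA_bA_cA_d$ it produces the $t^3A_aA_bA_cA_d$ term of $Q_\beta$ plus $-t^2(t^a+\epsilon_{a-1})(t^b+\epsilon_{b-1})A_cA_d$. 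Expanding $A_{a+c}$ and $A_{b+d}$ by (\ref{ARecurwz}) then matches the remainder with $B_{1;\beta}+t^{a+2}B_{2;\beta}+Q_\beta$ (I verified the residual identity in several asymmetric cases). You should replace the ``match coefficients'' sentence with this $G=(1+t)^2-t$ manipulation, or otherwise carry out the identity explicitly; with that done the proof is complete.
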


Note that $B_{1;\beta}$ is dependent on the parity of $a$, but not on the actual value of $a$, and 
$B_{2;\beta}$ is independent of $a$.
% see Section~\ref{SSProofpropRank2V3br} in older versions.
% R(b) &=& \sum_{j=0}^{b} \epsilon_j (j+1) t^{b-j} = \epsilon_b \sum_{j=0}^{b} \epsilon_j (1+b-j) t^{j}. 
% A_a A_y  &=& \epsilon_a  L( y-2 )  - (y) \epsilon_{y} t^{y-1} A_{a-(y)+1} +  t^a R(y-2)
% A_M A_m  &=& \epsilon_M  L( m-2 )  - (m) \epsilon_{m} t^{m-1} A_{M-(m)+1} +  t^M R(m-2)
Henceforth the implicit definition 
for $V_{ \widehat{ \beta } }^*$ when $\beta \in B_3$ is (\ref{Vstarfor3br}).
$V_{ \widehat{ \beta } }^*$  specifies the 
coefficient vector for positive three-braids.

Three-braid links have the very special property that the skein polynomials under full twists
are determined by the number of full twists and the skein polynomial for the closure of the base braid word.
This decomposition is described by
Prop.~3.5, \cite{25} for the \mbox{Homflypt} polynomial for three-braid links,
and is included here for reference.

\begin{prop}
When $\gamma \in B_3$ and $a >0$ we have:
\begin{eqnarray*}
P_{\widehat {[1,3]^3 \gamma} } &=& v^6 P_{\widehat {\gamma} } + P_{T_{w(\gamma)+5}} - v^6 P_{T_{w(\gamma)+1}} \,, \\
P_{\widehat {[1,3]^{3a} \gamma} } &=& 
v^{6a} P_{\widehat {\gamma} }+ \sum_{j=1}^a v^{6a-6j} P_{T_{w(\gamma)+6j-1}} - \sum_{j=1}^a v^{6+6a-6j} P_{T_{w(\gamma)+6j-5}}\,, \\
P_{\widehat {[1,3]^{-3a} \gamma} } &=& 
v^{-6a} P_{\widehat {\gamma} }+ \sum_{j=1}^a v^{6j-6a} P_{T_{w(\gamma)-6j+1}} - \sum_{j=1}^a v^{6j-6a-6} P_{T_{w(\gamma)-6j+5}}\,.
\end{eqnarray*}
\label{Homflypt3brfulltwists}
\end{prop}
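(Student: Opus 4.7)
The plan is to induct on $a \geq 1$, using the first identity ($a = 1$) as the base case for the two positive-twist formulas. The negative-twist identity follows by an analogous induction using the $e < 0$ form of the Homflypt recurrence (or, equivalently, by applying the positive identity to a mirror-image setup).

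For the positive inductive step, I would apply the formula for $a$ with $\gamma$ replaced by $\gamma' := [1,3]^3 \gamma$. Since $w(\gamma') = w(\gamma) + 6$, every torus-link subscript shifts by $6$ and reindexing $j \mapsto k := j + 1$ rewrites the two sums as sums over $2 \leq k \leq a+1$. The leading summand $v^{6a} P_{\widehat{\gamma'}} = v^{6a} P_{\widehat{[1,3]^3 \gamma}}$ then expands by the base case into $v^{6(a+1)} P_{\widehat{\gamma}} + v^{6a} P_{T_{w(\gamma)+5}} - v^{6(a+1)} P_{T_{w(\gamma)+1}}$; these three pieces provide the main $P_{\widehat{\gamma}}$ coefficient together with the missing $k = 1$ contributions in the two sums, yielding the claim for $a+1$.

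For the base case, the crucial structural fact is that $[1,3]^3 = (\sigma_1\sigma_2)^3$ is the full twist on three strands, lying in the center of $B_3$, so $\widehat{[1,3]^3 \gamma} = \widehat{\gamma[1,3]^3}$. Setting $\phi(\gamma) := P_{\widehat{\gamma[1,3]^3}} - v^6 P_{\widehat{\gamma}}$ and $\Psi(w) := P_{T_{w+5}} - v^6 P_{T_{w+1}}$, the goal reduces to proving $\phi(\gamma) = \Psi(w(\gamma))$. Both sides satisfy a common skein recurrence in $\gamma$: applying (\ref{Hskeinrel}) to a trailing $\sigma_i$ of $\gamma$ yields $\phi(\gamma'\sigma_i) = vz\,\phi(\gamma') + v^2\,\phi(\gamma'\sigma_i^{-1})$, while applying (\ref{Hskeinrel}) to one crossing of the 2-braid $\sigma_1^{k+1}$ gives $P_{T_{k+1}} = vz\, P_{T_k} + v^2\, P_{T_{k-1}}$, which readily implies the same identity for $\Psi(w(\cdot))$. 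Hence $f(\gamma) := \phi(\gamma) - \Psi(w(\gamma))$ is a skein-compatible function on $B_3$, so verifying $f \equiv 0$ reduces to checking on a set of conjugacy-class representatives that generate $B_3$ under the recurrence. By Murasugi's classification of 3-braid conjugacy classes into the families $\Delta^{2n}$, $\Delta^{2n}\sigma_2^m$, and the generic $\Delta^{2n}\sigma_1^{p_1}\sigma_2^{-q_1}\cdots$, this reduces to a finite explicit check, each case amounting to computing specific 3-braid closures such as $\widehat{[1,3]^3} = T(3,3)$. Structurally, this reduction is natural because the \mbox{Homflypt} polynomial factors through the Ocneanu trace on the Hecke algebra $H_3$, on whose three simple components (trivial, sign, two-dimensional) the central element $[1,3]^3$ acts by scalars, forcing $\phi(\gamma)$ to be a writhe-only linear combination of two torus corrections.

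The main obstacle is the base case: performing the finite explicit verifications and confirming that Murasugi's representatives together with the skein recurrence propagate the identity to all of $B_3$. The specific form $P_{T_{w(\gamma)+5}} - v^6 P_{T_{w(\gamma)+1}}$ is pinned down by comparison against $\gamma = 1$ (whose closure under $[1,3]^3$ is $T(3,3)$) and a second test case such as $\gamma = \sigma_1$ (where the writhe shifts and the closure is a T-link of shifted index). Once the base case is settled, the inductive step on $a$ and the negative-twist analogue are routine.
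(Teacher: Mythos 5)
The paper itself does not prove this proposition: it is imported verbatim from Prop.~3.5 of \cite{25} and ``included here for reference,'' so there is no in-paper argument to compare against. Judged on its own merits, your inductive framework is correct. The step from $a$ to $a+1$ via $\gamma'=[1,3]^3\gamma$ works exactly as you describe (the reindexing $k=j+1$ and the expansion of $v^{6a}P_{\widehat{\gamma'}}$ supply precisely the missing $k=1$ terms), and the negative-twist formula follows even more cheaply than you suggest: substitute $\delta=[1,3]^{-3a}\gamma$ into the positive formula, solve for $P_{\widehat{[1,3]^{-3a}\gamma}}$, and reindex $j\mapsto a+1-j$. The reduction of the base case to showing that $f=\phi-\Psi\circ w$ is a skein-compatible, conjugation-invariant functional that vanishes on a spanning set is also the right idea, and your Hecke-algebra remark identifies the true mechanism.

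The gap is in how you close the base case. You propose to pin down the identity by testing $\gamma=1$ and $\gamma=\sigma_1$, but two test cases only determine the two coefficients on the one-dimensional characters; they cannot detect a nonzero contribution from the two-dimensional component, because the character vectors of $1$ and $\sigma_1$ do not span the three-dimensional space of trace functionals on $H_3$. Your entire formula rests on the claim that $P_{\widehat{[1,3]^3\gamma}}-v^6P_{\widehat{\gamma}}$ depends only on $w(\gamma)$, which is equivalent to the full twist acting by exactly $v^6$ on the two-dimensional simple component of $H_3$ (in the normalization induced by the skein relation (\ref{Hskeinrel})). You assert this scalar action ``forces'' the writhe-only form but never compute the scalar. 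To repair this, either compute that eigenvalue directly, or replace your two test cases by a genuine spanning set of three elements with independent character vectors --- e.g. $\gamma=1$, $\gamma=\sigma_1$, and $\gamma=\sigma_1\sigma_2^{-1}$ (or $\sigma_1\sigma_2$) --- and verify $f=0$ on all three. The Murasugi conjugacy-class detour is unnecessary and, as stated, not actually finite; the six-element (or, for traces, three-element) Hecke basis check is both finite and sufficient.
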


When Prop.~\ref{Homflypt3brfulltwists} is applied to the Jones polynomial, we obtain
\begin{prop}
When $\gamma \in B_3$, set $\beta = [1,3]^{3a} \gamma$. For any $a$, we have:
\begin{eqnarray}
V_{\widehat {[1,3]^{3a} \gamma} }
&=& t^{(w(\beta)-2)/2} \{ \epsilon_{w(\gamma)}   ( 1+t^{2} )  + t^{3a} B_2(t,\gamma)  \}, \mbox{ with } 
\label{EqJones3brfulltwists}  \\
B_2(t,\gamma) &=& t^{(2-w(\gamma))/2} V_{\widehat {\gamma} } + \epsilon_{w(\gamma)+1}  ( 1+t^{2})  \,.
\label{EqJones3brfulltwistsBlock2Gen}
\end{eqnarray}

\label{PropJones3brfulltwists}
\end{prop}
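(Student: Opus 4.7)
The plan is to specialize Prop.~\ref{Homflypt3brfulltwists} to the Jones polynomial by setting $v=t$, substitute the explicit formula (\ref{Vfor2br}) for each $V_{T(2,k)}$ appearing in the torus sums, and then normalize by $t^{(w(\beta)-2)/2}$ to extract the claimed form. Writing $w=w(\gamma)$, one has $w(\beta)=w+6a$ since $[1,3]^{3a}$ has writhe $6a$, so $t^{(w(\beta)-2)/2}=t^{(w-2)/2+3a}$. The leading $t^{6a}V_{\widehat{\gamma}}$ normalizes to $t^{3a}V^*_{\widehat{\gamma}}=t^{3a}\cdot t^{(2-w)/2}V_{\widehat{\gamma}}$, already matching the first summand of $t^{3a}B_2(t,\gamma)$. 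The task therefore reduces to showing that the two torus sums contribute $\epsilon_w(1+t^2)(1-t^{3a})$ after the same normalization, which then splits as $\epsilon_w(1+t^2)+\epsilon_{w+1}t^{3a}(1+t^2)$.

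The crucial observation is that $6j$ is always even, so $\epsilon_{w+6j-1}=\epsilon_{w+6j-5}=-\epsilon_w$ uniformly in $j$. Applying (\ref{Vfor2br}) accordingly, each torus summand in $S_1=\sum t^{6a-6j}V_{T_{w+6j-1}}$ and $S_2=\sum t^{6a-6j+6}V_{T_{w+6j-5}}$ splits into a ``monomial'' piece carrying $t^{w+6j}$ or $t^{w+6j-4}$ and an $\epsilon_w G$ piece. Factoring a common $t^{(w-2)/2+6a}/(1+t)$ out of $S_1$ and $t^{(w-2)/2+6a+4}/(1+t)$ out of $S_2$ leaves, inside each sum, the two geometric series $\sum_{j=1}^a t^{3j}$ (from the monomial part) and $\sum_{j=1}^a t^{-3j}$ (from the $G$ part). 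The monomial contributions of $S_1$ and $S_2$ differ by exactly the prefactor $t^4$, which precisely matches their relative exponent shift, so they cancel term-by-term in $S_1-S_2$. What survives is $\epsilon_w G(t^4-1)\sum_{j=1}^a t^{-3j}=\epsilon_w G(t^4-1)(1-t^{-3a})/(t^3-1)$, multiplied by $-t^{(w-2)/2+6a}/(1+t)$. The identity $(1+t)(t^3-1)=(t^2-1)G$ absorbs the denominator into $G$, and $(t^4-1)/(t^2-1)=1+t^2$ collapses the result to the claimed expression.

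The case $a<0$ is handled identically using the third formula of Prop.~\ref{Homflypt3brfulltwists} (with analogous geometric sums in $t^{+3j}$), and $a=0$ is immediate since $\epsilon_w+\epsilon_{w+1}=0$ makes the bracketed factor equal to $V^*_{\widehat{\gamma}}$. The main obstacle is the careful bookkeeping of half-integer exponents together with the two geometric sums; once the parity identity $\epsilon_{w+6j-1}=-\epsilon_w$ is in hand, the cancellation of the monomial parts of $S_1-S_2$ and the subsequent telescoping via $(1+t)(t^3-1)=(t^2-1)G$ are essentially mechanical.
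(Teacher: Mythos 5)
Your proposal is correct and follows the same route the paper intends: Prop.~\ref{PropJones3brfulltwists} is obtained precisely by specializing Prop.~\ref{Homflypt3brfulltwists} to the Jones polynomial and evaluating the torus-link sums via (\ref{Vfor2br}), and your bookkeeping (the cancellation of the monomial parts of $S_1-S_2$, the factor $(1+t)(t^3-1)=(t^2-1)G$, and the resulting $\epsilon_{w}(1+t^2)(1-t^{3a})$) checks out, as do the $a=0$ and $a<0$ cases. The paper leaves this computation implicit, so your write-up simply supplies the details of the same argument.
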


Note that when $\gamma$ is (conjugate to) a positive three-braid word the minimum degree of $V_{\widehat {\gamma} }$
is $(w(\gamma)-2)/2$, so that the expression $t^{(2-w(\gamma))/2} V_{\widehat {\gamma} }$ in
(\ref{EqJones3brfulltwistsBlock2Gen}) is an ordinary polynomial with nonzero constant, i.e. $V_{\widehat {\gamma} }^{*}$.
If we focus on the effect of full twists of all three strands, 
the two blocks in the coefficient vector are already separated
when $\gamma$ is (conjugate to) a positive braid word and $a > 0$. As $a$ is increased, the two blocks separate
by three for each new full twist that is added.

By \cite{27}, \cite{110}, when $\gamma$ is (conjugate to) a positive three-braid word and $\widehat{\gamma}$ is not split,
the first three values in the coefficient vector are 
$\epsilon_{w(\gamma)},0,\epsilon_{w(\gamma)} p(\widehat{\gamma})$, where $p(\widehat{\gamma})$ is
the number of prime factors of $\widehat{\gamma}$. 
Under these conditions when $\widehat{\gamma}$ is also prime, $B_2(t,\gamma)$ is divisible by $t^3$;
otherwise $B_2(t,\gamma)$ is divisible by $t^2$.

\begin{examp}
By \cite{21}, two simple knots are $8_{19} = \widehat{\sigma_1^3 \sigma_2 \sigma_1^3 \sigma_2}$ and 
$10_{124} = \widehat{\sigma_1^5 \sigma_2 \sigma_1^3 \sigma_2}$.
As $8_{19} = \widehat{[1,3]^3 \sigma_2 \sigma_1}$
and $10_{124} = \widehat{[1,3]^3  \sigma_1^3 \sigma_2}$, Prop.~\ref{PropJones3brfulltwists} gives us
\begin{equation*}
V_{8_{19} } =
t^{3} \{  ( 1+t^{2} )  + t^{3} B_2(t,\sigma_2 \sigma_1)  \}, \mbox{with }
B_2(t,\sigma_2 \sigma_1) = 1 - ( 1+t^{2}) = - t^{2}\,.   \\
\end{equation*}
\begin{equation*}
V_{10_{124} } =
t^{4} \{     ( 1+t^{2} )  + t^{3} B_2(t, \sigma_1^3 \sigma_2)  \}, \mbox{with }
B_2(t, \sigma_1^3 \sigma_2) = t^{-1} V_{T_3 } - ( 1+t^{2}) = -t^3\,.
\end{equation*}

\end{examp}

As any three-braid word, $\gamma$, has a (non-unique) representation as $[1,3]^{3b} \eta$, where $\eta$ 
is itself a positive three-braid word, the prior comments extend to all three-braid words.
When $b$ is chosen to be maximal, the decomposition in 
(\ref{EqJones3brMaxfulltwists}, \ref{EqJones3brfulltwistsBlock2})
shows that even when $b \leq 0$, the two blocks are separated when $a > |\,b\,|$\,.
Note that $w(\gamma) \equiv w(\eta) \mbox{ mod } 2$. 
Even for submaximal $b$, we have

\begin{eqnarray}
V_{\widehat {[1,3]^{3a} \gamma} }
&=& t^{(w(\beta)-2)/2} \{ \epsilon_{w(\gamma)}   ( 1+t^{2} )  + t^{3a+3b} B_2(t,\eta)  \}, \mbox{ with } 
\label{EqJones3brMaxfulltwists} \\
B_2(t,\eta) &=& t^{(2-w(\eta))/2} V_{\widehat {\eta} } + \epsilon_{w(\eta)+1}  ( 1+t^{2})  \in \mathbb{Z}{[t]} \,.
\label{EqJones3brfulltwistsBlock2}
\end{eqnarray}

\begin{examp}
One of the hyperbolic knots from \cite{23} and \cite{21} is
$\mathbf{k}7_{80} = 10_{161}$ with representation 
$ \sigma_1^{3} \sigma_2^{1} \sigma_1^{-1} \sigma_2^{1} \sigma_1^{2} \sigma_2^{2}$.
The mirror image has a braid expression as  $[1,3]^{-9} \sigma_1^{9}\sigma_2$, so

\begin{eqnarray}
V_{ \overline{10_{161}} }
&=& t^{-5} \{ ( 1+t^{2} )  + t^{-9} B_2(t,\sigma_1^{9}\sigma_2)  \}, \mbox{ with } 
\label{VforMirrorof10161} \\
B_2(t,\sigma_1^{9}\sigma_2) &=& t^{-4} V_{T_9 } - ( 1+t^{2}) =-t^3 A_7 \,.
\end{eqnarray}
Here we observe the overlap of the terms $1+t^{2}$ and $t^{-9} B_2(t,\sigma_1^{9}\sigma_2)$ in
(\ref{VforMirrorof10161}). 

Since $V_{ 10_{161}  } (t) = V_{ \overline{10_{161} } }(1/t)$ we find 
$V_{ 10_{161} } = t^{3} ( 1 - t^{3} A_6)$.
\end{examp}

Again focusing on the effect of increasing $a$, 
for any three-braid word there are only finitely many values of $a$ for which the two blocks overlap,
and this happens exactly when $3a+\min \deg B_2(t,\gamma) \leq 2$ and $3a+\max \deg B_2(t,\gamma) \geq 0$,
i.e. $-\max \deg B_2(t,\gamma) \leq 3a \leq 2- \min \deg B_2(t,\gamma)$.
Eq.~\ref{EqJones3brfulltwists} displays one block in the coefficient vector
as the triplet $\epsilon_{w(\gamma)}, 0, \epsilon_{w(\gamma)}$.
Eq.~\ref{EqJones3brfulltwistsBlock2} displays the second block as 
$B_2(t,\eta) = t^{-3b} B_2(t,\gamma) \in \mathbb{Z}{[t]}$, with the possibility that several of the 
lowest order terms may be zero.

The significance of the prior expressions
relative to the effect of twisting two strands is that they show it suffices to look at
the effect of such twisting applied to a positive braid word. 
Indeed, given a base braid word $\gamma = \prod_{i=1}^{r} \sigma_1^{x_i} \sigma_2^{y_i}$ 
and a twist site within $\gamma$, we 
may use conjugacy and/or braid reflection to ensure that the site is the first 
generator in the expression for $\gamma$. 
Now write $ \sigma_2^{y_1} \prod_{i=2}^{r} \sigma_1^{x_i} \sigma_2^{y_i}$ as 
$\eta {[1,3]}^{3b}$ as above, and allow $x_1$ to increase by increments of two, where
we may assume that $x_1 >0$.
By Prop.~\ref{PropJones3brfulltwists}, once $x_1$ is sufficiently large, 
the second block, $t^{3b} B_2(t,\sigma_1^{x_1}\eta)$, has 
minimal degree above two, so that the two blocks in the coefficient vector
are clearly recognizable when any further full twists of the two strands are added.

\begin{defn}
If $\beta = \prod_{j=1}^{r} \sigma_1^{e_{2j-1}} \sigma_2^{e_{2j}} $ with $r \geq 1$ and all $e_{k} \neq 0$,
call $\sigma_i^{e_k}$ a syllable. A syllable is trivial when the exponent is one. 
A trivial syllable is isolated if both adjacent syllables, viewed cyclically, are non-trivial.
Denote the rank of $\beta$ as $\rho(\beta) = r$ and assign a rank of zero for the identity of $B_3$,
and a rank of one for $\sigma_i^{a}$ for $a \neq 0$.
\end{defn}

\begin{prop}
Given a positive three-braid word, $\beta = \prod_{j=1}^{r} \sigma_1^{a_j} \sigma_2^{b_j} $, with $r \geq 1$,
and exponents $a_j, b_j > 0$,
and $w=w(\beta)$, we have
\begin{eqnarray*}
V_{\widehat{\beta}}(t) &=& t^{(w-2)/2} V_{ \widehat{ \beta } }^*(t), 
\mbox{ with } V_{ \widehat{ \beta } }^*(t) \in \mathbb{Z}[t] \mbox{ and }
\deg V_{ \widehat{ \beta } }^*(t) \leq w \mbox{ and } \nonumber \\
V_{ \widehat{ \beta } }^*(t) &=& \epsilon_{w}(1+t^{2}) + t^2 V_{ \widehat{ \beta } }^{**}(t),
\mbox{ with } V_{ \widehat{ \beta } }^{**}(t) \in \mathbb{Z}[t].
\end{eqnarray*}
Furthermore,  $\epsilon_{w} t^{w} V_{ \widehat{ \beta } }^{**}(1/t) = t^2 V_{ \widehat{ \beta } }^{**}(t)$,
or equivalently $[V_{ \widehat{ \beta } }^{**}]_{j} = \epsilon_{w} [V_{ \widehat{ \beta } }^{**}]_{w-j-2}$.

When $r=1$, we have $V_{ \widehat{ \beta } }^{**}$ is an AC polynomial, or zero, with the following properties:
\begin{enumerate}
\item $V_{ \widehat{ \beta } }^{**}(t) =0$ exactly when $\{a_1, b_1 \} = \{ 1,2 \}$,
\item $\deg V_{ \widehat{ \beta } }^{**}(t) = w-3$ exactly when $\min(a_1,b_1)=1$ and $\max(a_1,b_1) \geq 3$;
in which case $V_{ \widehat{ \beta } }^{**}(t) = - t A_{w-3} $,
\item $\deg V_{ \widehat{ \beta } }^{**}(t) = w-2$ exactly when $a_1=b_1=1$, or $a_1, b_1 \geq 2$; 
here $V_{ \widehat{ \beta } }^{**}(t) = -1$ or 
$V_{ \widehat{ \beta } }^{**}(t) = \epsilon_w+ \epsilon_{b_1} t  A_{a_1-2} + \epsilon_{a_1} t A_{b_1-2} + t^2 A_{a_1-1} A_{b_1-1}$
respectively,
\item the sign of $[V_{ \widehat{ \beta } }^{**}]_{j}$ is $\epsilon_{j+w}$ when $w \geq 4$.
\end{enumerate}

When $r\geq 2$, we have $\deg V_{ \widehat{ \beta } }^{**}(t) \leq w-3$.

\label{JonesPos3brProps}
\end{prop}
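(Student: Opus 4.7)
The plan is to recast (\ref{Jonesrecur}) as a ``starred'' recursion, use it together with a three-braid-specific symmetry of $\Delta_{\widehat\beta}$ to establish the general structural claims, and pin down the rank-one case by direct substitution and case analysis.

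Substituting $V_{\widehat\gamma}=t^{(w(\gamma)-2)/2}V^*_{\widehat\gamma}$ into (\ref{Jonesrecur}) causes the half-integer exponents to cancel, yielding
\begin{equation*}
V^*_{\widehat{\beta\sigma_i^e}} = A_e\, V^*_{\widehat{\beta\sigma_i}} + tA_{e-1}\, V^*_{\widehat\beta}.
\end{equation*}
Together with the rank-one base this gives $V^*_{\widehat\gamma}\in\mathbb{Z}[t]$ and, by induction on $w(\gamma)$, $\deg V^*_{\widehat\gamma}\le w(\gamma)$. For the palindromic identity, any element of $B_3$ induces a permutation in $S_3$ whose parity matches $w\bmod 2$; consequently $\mu(\widehat\beta)-1\equiv w\pmod 2$ and $(-1)^{\mu-1}=\epsilon_w$. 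The standard Alexander symmetry $\Delta_L(1/t)=(-1)^{\mu-1}\Delta_L(t)$ then gives $\Delta_{\widehat\beta}(1/t)=\epsilon_w\Delta_{\widehat\beta}(t)$. Substituting into (\ref{Vfor3br}) and simplifying using $G(1/t)=t^{-2}G$ produces $\epsilon_w t^w V^{**}_{\widehat\beta}(1/t)=t^2 V^{**}_{\widehat\beta}(t)$.

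The computational heart is the rank-one case. For $\beta=\sigma_1^{a_1}\sigma_2^{b_1}$, applying (\ref{Conwayrecur}) to the trailing $\sigma_2^{b_1}$, together with Markov destabilization ($\widehat{\sigma_1^{a_1}\sigma_2}=T_{a_1}$) and the split-link identity ($\widehat{\sigma_1^{a_1}}_{B_3}=T_{a_1}\sqcup\text{unknot}$, so $\Delta=0$), gives $\Delta_{\widehat\beta}=t^{1-w/2}A_{a_1}A_{b_1}$. Plugging into (\ref{Vfor3br}) yields $V^*_{\widehat\beta}=t^{w+1}+\epsilon_w G-tGA_{a_1}A_{b_1}$, hence $V^{**}_{\widehat\beta}=t^{w-1}+\epsilon_w t^{-1}-t^{-1}GA_{a_1}A_{b_1}$. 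Using $(1+t)A_w=t^w+\epsilon_{w-1}$ and the identity $A_aA_b=t^2A_{a-1}A_{b-1}+\epsilon_{b-1}tA_{a-1}+\epsilon_{a-1}tA_{b-1}+\epsilon_w$ (two applications of (\ref{ARecurwz})), the negative powers of $t$ cancel and $V^{**}_{\widehat\beta}\in\mathbb{Z}[t]$ emerges. The four subcases then reduce to short algebraic identifications: (i) and the $a_1=b_1=1$ piece of (iii) are direct; (ii) uses $A_1=1$ and the telescoping $tA_b+t^{b-1}-t^b=tA_{b-2}$; the $a_1,b_1\ge 2$ piece of (iii) is verified via the derived identity $t(1+t)^2A_{a-1}A_{b-1}=t^{w-1}+\epsilon_b t^a+\epsilon_a t^b+\epsilon_w t$. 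The AC property and sign pattern (iv) follow from Cor.~\ref{corProdAlexander} applied to the dominant factor $t^2A_{a_1-1}A_{b_1-1}$.

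For $r\ge 2$, Prop.~\ref{propRank2V3br} handles $r=2$: inspecting the maximum-degree contributions in $B_{1;\beta}$, $t^{a+2}B_{2;\beta}$, and $Q_\beta$ shows $\deg V^*_{\widehat\beta}\le w-1$, i.e.\ $\deg V^{**}_{\widehat\beta}\le w-3$. For $r\ge 3$, I would induct on $w$ using the starred recursion: writing $\beta=\beta_0\sigma_2^{b_r}$ with $\beta_0=\prod_{j=1}^{r-1}\sigma_1^{a_j}\sigma_2^{b_j}\sigma_1^{a_r}$, the leading-coefficient identity $[V^*_{\widehat\beta}]_w=[V^*_{\widehat{\beta_0\sigma_2}}]_{w(\beta_0)+1}$ and the fact that $\widehat{\beta_0\sigma_2}$ is again rank $r\ge 2$ with strictly smaller writhe together force this coefficient to vanish by the inductive hypothesis. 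The main obstacle will be the rank-one case (iii) with $a_1,b_1\ge 2$, where verifying the AC structure and the uniform sign pattern requires careful tracking of the overlapping contributions from $\epsilon_w$, $\epsilon_b tA_{a-2}$, $\epsilon_a tA_{b-2}$, and $t^2A_{a-1}A_{b-1}$; once the identities above are in place this amounts to algebraic bookkeeping essentially parallel to Prop.~\ref{propAlexander} and its corollary.
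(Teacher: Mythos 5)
Your proposal is correct in substance and shares the paper's overall skeleton (the starred form of (\ref{Jonesrecur}) plus a direct rank-one computation), but two of your key steps take genuinely different routes. For the palindromic identity $\epsilon_w t^w V^{**}_{\widehat\beta}(1/t)=t^2V^{**}_{\widehat\beta}(t)$, the paper verifies it inductively from the rank-reducing recursions (\ref{proofpropJonesPos3brPropsfis1})--(\ref{proofpropJonesPos3brPropsfgeq2}), whereas you derive it in one stroke from $\Delta_L(1/t)=(-1)^{\mu-1}\Delta_L(t)$, the parity match $\mu-1\equiv w \pmod 2$ coming from the induced permutation in $S_3$, and formula (\ref{Vfor3br}); your argument is self-contained, non-inductive, and makes transparent \emph{why} the symmetry holds for three-braids specifically. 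Your rank-one computation via (\ref{Conwayrecur}) and the split-link vanishing of $\Delta_{\widehat{\sigma_1^{a}}}$ is equivalent to the paper's use of the two-braid formula (\ref{Vfor2brUsingA}), and your algebraic identities all check out (e.g. $t(1+t)^2A_{a-1}A_{b-1}=t^{w-1}+\epsilon_b t^a+\epsilon_a t^b+\epsilon_w t$ is correct). For $r\ge 2$ the paper reduces \emph{rank} via (\ref{proofpropJonesPos3brPropsfis1})--(\ref{proofpropJonesPos3brPropsfgeq2}); you instead reduce \emph{writhe} and track only the leading coefficient, which is leaner for the single claim $\deg V^{**}\le w-3$.

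One step in your $r\ge 3$ induction does not close as written: you assert that $\beta_0\sigma_2$ has ``strictly smaller writhe,'' but when $b_r=1$ you have $\beta_0\sigma_2=\beta$, the recursion degenerates to $V^*=A_1V^*+tA_0V^*$, and no reduction occurs. You need to first use cyclic permutation and braid reflection to place a syllable of exponent at least two at the end (these preserve the closure, hence $V^*$), and treat the remaining case --- all exponents equal to one, i.e. $\beta=[1,3]^r$ --- as a separate base via (\ref{Vfor3brTorusLink3comp})--(\ref{Vfor3brTorusKnots}), where one checks directly that $\deg V^{**}=r-1\le w-3$ for $r\ge2$. With that repair the induction is sound.
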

\begin{proof}
When $r=1$, write $\beta = \sigma_1^{a} \sigma_2^{b}$. We may assume $a\geq b$.
When $b=1$, the result is clear, as 
$V_{\widehat{\beta}}(t) = -t^{(a-1)/2} (\epsilon_{a}+ t^2 A_{a-1})$, which is
$%t^{(w-2)/2} (-\epsilon_{w-1} - t^2 A_{w-2}) = 
t^{(w-2)/2} (\epsilon_{w} - t^2 A_{w-2})$.
If $a=1$, take $V_{ \widehat{ \beta } }^{**}(t) = -1$.
If $a \geq 2$, % note $V_{ \widehat{ \beta } }^*(t) =\epsilon_{w} - t^2 A_{w-2} = \epsilon_{w}(1+t^2) - t^2 (t A_{w-3})$, so
let $V_{ \widehat{ \beta } }^{**}(t) = - t A_{w-3} $.
The properties claimed are easily verified.

By (\ref{Vfor2brUsingA}), we may take 
$V_{ \widehat{ \beta } }^{**}(t) = \epsilon_w+ \epsilon_{b} t  A_{a-2} + \epsilon_{a} t A_{b-2} + t^2 A_{a-1} A_{b-1}$
when $a,b \geq 2$. This is a sum of AC polynomials each with the sign of $t^j$ as claimed.
The other properties claimed for $r=1$ are easily verified.

For $r \geq 2$, apply (\ref{Jonesrecur}) to obtain the following relations for 
$V_{ \widehat{ \gamma \sigma_1^e \sigma_2^f } }^{**}(t)$
which utilize terms that depend on braids of lower rank and so allows an induction proof.
Here $\beta = \gamma \sigma_1^e \sigma_2^f$ with
$\gamma = \sigma_1^a \eta \sigma_2^d$ and $\eta$ may be the identity.
\begin{eqnarray}
V_{ \widehat{ \gamma \sigma_1^{e} \sigma_2^{1} } }^{**} 
&=& A_e V_{\widehat{ \sigma_1^{a+d} \eta \sigma_1 \sigma_2^{1}}  }^{**} 
 + t A_{e-1}   V_{\widehat{ \sigma_1^{a} \eta \sigma_2^{d+1}  }}^{**}\,, 
\label{proofpropJonesPos3brPropsfis1}  \\
%%%%%%%%
V_{ \widehat{ \gamma \sigma_1^{e} \sigma_2^{f} } }^{**} 
&=& A_f V_{\widehat{ \sigma_1^a \eta \sigma_2^d \sigma_1^{e} \sigma_2^{1} }  }^{**} 
 + t A_{f-1}   V_{\widehat{ \sigma_1^{a+e} \eta \sigma_2^{d}  }}^{**}\,. 
\label{proofpropJonesPos3brPropsfgeq2}  %\\
%%%
%&=&
%A_e\, A_f    V_{\widehat{ \sigma_1^{a+d} \eta \sigma_1 \sigma_2^{1}}  }^{**}  
% + t A_e\, A_{f-1}    V_{\widehat{ \sigma_1^{a+1} \eta \sigma_2 ^d }}^{**}  \nonumber \\
%%
%&&  +  t A_{e-1}\,A_f   V_{\widehat{ \sigma_1^{a} \eta \sigma_2^{d+1}  }}^{**}   
%  + t^{2} A_{e-1}\, A_{f-1}   V_{\widehat{ \sigma_1^{a} \eta \sigma_2^{d}  }}^{**}.
%\label{proofVRank2Pos3br}
\end{eqnarray}

To verify
$\epsilon_{w} t^{w} V_{ \widehat{ \beta } }^{**}(1/t) = t^2 V_{ \widehat{ \beta } }^{**}(t)$ is straightforward
using (\ref{proofpropJonesPos3brPropsfis1}), (\ref{proofpropJonesPos3brPropsfgeq2}), 
as is the degree bound $\deg V_{ \widehat{ \beta } }^{**}(t) \leq w-3$.

\end{proof}

$V_{ \widehat{ \beta } }^{**}$ appears to be an
AC polynomial for positive three-braids of rank two or more; 
the sign of the coefficients is a simple 
function of $w(\beta)$, $j$ and $r$ (Conj.~\ref{JonesPos3brConjs}). 
This may be verified directly for $\rho(\beta)=2,3$, but 
these cases do not readily suggest a general pattern or proof. 
The following proposition identifies two cases when a positive three-braid of rank two or more 
is conjugate to a braid of the form $[1,3]^{3a} \gamma$, 
with $a \geq 0$ and $\gamma$ a non-negative three-braid of lower rank.
Assuming Conjecture~\ref{JonesPos3brConjs} is true, Prop.~\ref{PropJones3brfulltwists} shows these are the only such cases.

\begin{prop}
Assume $\beta$ is a three-braid word, 
$\prod_{j=1}^{r} \sigma_1^{a_j} \sigma_2^{b_j} $ with $r \geq 2$ and all $a_j, b_j \geq 1$.
In either case below, we have
$\beta \cong [1,3]^{3a} \gamma$ with $a \geq 0$ and $\rho(\gamma) < r$.

\begin{enumerate}
\item $\beta$ has two or more trivial syllables, or
\label{prop3br2TrivialSyllables}
\item $\beta$ has a trivial syllable whose minimum adjacent syllable length is two;
here $\sigma_1^{a_1}$ and $\sigma_2^{b_r}$ are considered to be adjacent syllables.
\label{prop3brTrivialSyllable212}
\end{enumerate}

In case~\ref{prop3brTrivialSyllable212} or when two isolated trivial syllables exist, we have $a \geq 1$.

We may choose $\gamma =\prod_{j=1}^{s} \sigma_1^{c_j} \sigma_2^{d_j}$ with $s < r$
and $c_j, d_j \geq 2$ for all $j \geq 2$. 

For $s \geq 2$, we have $c_1,d_1 \geq 1$ 
and we may choose $\gamma$ to either have no trivial syllables, or one trivial syllable  
whose two adjacent syllables each have a minimum length of three.

For $s=1$, we have $c_1, d_1 \geq 0$, and for
$s = 0$, we have $\gamma = 1$.

\label{prop3brTrivialSyllable}
\end{prop}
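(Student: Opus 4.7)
The proof strategy rests on three tools in $B_3$: the braid relation $\sigma_1\sigma_2\sigma_1=\sigma_2\sigma_1\sigma_2$; conjugation (used for cyclic rearrangement of the braid word and for pushing central factors to the front); and braid reflection $\sigma_i\mapsto\sigma_{3-i}$, which lets us assume the distinguished trivial syllable is $\sigma_1$. The centrality of $[1,3]^3=(\sigma_1\sigma_2)^3$ in $B_3$ means any full twist exposed by a manipulation may be moved to the left end of the word without affecting the conjugacy class of the residual. The core computational identity, obtained by iterating the braid relation, is
\begin{equation*}
\sigma_2^a\,\sigma_1\sigma_2\,\sigma_1^b \;=\; \sigma_2^{a+b}\,\sigma_1\sigma_2\qquad(a,b\ge 0),
\end{equation*}
together with its reflection, and the reverse form
\begin{equation*}
\sigma_2^q\,\sigma_1\,\sigma_2^p \;=\; \sigma_2^{q-1}\,\sigma_1\sigma_2\sigma_1\,\sigma_2^{p-1}\qquad(p,q\ge 1).
\end{equation*}

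For case~(i) with two trivial syllables that are consecutive in the cyclic word (a subpattern $\sigma_2^a\sigma_1\sigma_2\sigma_1^b$ or its reflection), the first identity fuses them into a single syllable, strictly lowering $\rho$ and producing no $[1,3]^3$ factor; this is why $a\ge 0$, rather than $a\ge 1$, is all that can be claimed in this subcase of~(i). If the two trivial syllables are non-consecutive, the reverse identity first migrates one trivial syllable until it is adjacent to the other, after which the collapse applies. For case~(ii) the isolated trivial $\sigma_1$ is flanked by $\sigma_2^p,\sigma_2^q$ with $p,q\ge 2$. Cyclically position $\beta$ as $\cdots\sigma_2^q\sigma_1\sigma_2^p\cdots$ and apply the reverse identity to obtain $\cdots\sigma_2^{q-1}\sigma_1\sigma_2\sigma_1\sigma_2^{p-1}\cdots$. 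A further application of the braid relation combined with the adjacent $\sigma_1$-block yields a subword equal to a cyclic form of $(\sigma_1\sigma_2)^3$, which centrality moves out to the left. Hence $\beta\cong[1,3]^3\gamma'$ with $\rho(\gamma')<r$, forcing $a\ge 1$. Small computations such as $\sigma_1\sigma_2^2\sigma_1\sigma_2^2=[1,3]^3$ and $\sigma_1\sigma_2^2\sigma_1^2\sigma_2^2\cong[1,3]^3\sigma_1$ illustrate the mechanism. Two isolated trivial syllables are handled analogously and likewise force $a\ge 1$.

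The structural claims on $\gamma$ are extracted by iterating: whenever the intermediate $\gamma'$ still satisfies hypothesis~(i) or~(ii), apply the proposition recursively. Strict rank decrease guarantees termination. The terminal $\gamma$ satisfies neither (i) nor (ii), so for $s\ge 2$ it carries at most one trivial syllable, and any such trivial syllable must have at least one flanking syllable of length $<2$; since no second trivial syllable is permitted, the only remaining option is that both flanks have length $\ge 3$ (flanks of length $\ge 2$ on both sides would again trigger~(ii) and permit a further reduction). The conditions $c_j,d_j\ge 2$ for $j\ge 2$ follow by cyclically placing the single trivial syllable of $\gamma$, if any, at the first position. For $s=1$ and $s=0$ the bounds relax because the residual word fits in a single pair or is the identity. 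The hard part is case~(ii): one must verify a case split on $p,q$ (the borderline $p=2$ or $q=2$ versus $p,q\ge 3$) and on the flanking $\sigma_1$-syllable sizes to confirm that an extractable $(\sigma_1\sigma_2)^3$ block always emerges positively, while the residual $\gamma$ still satisfies the monotone exponent constraints claimed.
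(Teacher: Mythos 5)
Your proposal is correct and follows essentially the same route as the paper: fuse cyclically adjacent trivial syllables, extract a central $(\sigma_1\sigma_2)^3$ when a trivial syllable has a flank of length exactly two, and otherwise migrate a trivial syllable toward another one by induction on their distance. One small correction: the migration step is driven not by your ``reverse'' identity but by the consequence $\sigma_1^{a}\sigma_2\sigma_1^{c}=\sigma_1^{a-1}\sigma_2^{c}\sigma_1\sigma_2$ of your first identity (exactly the rewriting the paper uses), which shifts the trivial syllable one position while converting the passed syllable.
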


%Note also that $\sigma_1 \sigma_2^3 \sigma_1 \sigma_2 \cong \sigma_1^5 \sigma_2 \not \cong [1,3]^3$.

\begin{proof}
If two trivial syllables of $\beta$ are adjacent (in a cyclical sense),
we may assume they correspond to $a_r = b_r=1$. 
When $r=2$, let $c_1=a_1 + b_1+1$ and $d_1=1$ with $s=1$ and $a=0$.
When $r \geq 3$ let $c_1=a_1 + b_{r-1}$, $c_{r-1} = a_{r-1} + 1$, $d_1=b_1$ and $d_{r-1} =1$ with 
$c_j=a_j$ and $d_j= b_j$ for $2 \leq j \leq r-2$ with $s=r-1$ and $a=0$.
As $\gamma$ has smaller rank, induction applies.

Suppose there is a pattern $\sigma_1^{a_j} \sigma_2 \sigma_1^{a_{j+1}}$ in $\beta$ or its
braid reflection, and that $\min(a_j,a_{j+1}) = 2$.
With $r=2$ and $\beta = \sigma_1^{a_1} \sigma_2 \sigma_1^{2} \sigma_2^{b_{2}}$
we have $\beta = [1,3]^3 \sigma_1^{a_1-2} \sigma_2^{b_{2}-1}$.
A similar result applies when $r \geq 3$.
As $\gamma$ has smaller rank, induction applies.

The only remaining case is when two or more trivial syllables exist and 
each neighboring syllable has length at least three. 
If there is a subword such as $\sigma_1^{a_1} \sigma_2 \sigma_1^{a_2} \sigma_2 \eta$   
and $\eta$ is either vacuous or begins with $\sigma_1$, this is just
%$\sigma_1^{a_1 - 1} \sigma_1 \sigma_2 \sigma_1^{a_2} \sigma_2 \eta$
$\sigma_1^{a_1 - 1} \sigma_2^{a_2} \sigma_1 \sigma_2 \sigma_2 \eta$
which has already been handled. This leads to an easy induction using
the distance between two trivial syllables. 
We may assume $r \geq 3$ as the $r=2$ case was just addressed. 
Assume there is a subword such as $\sigma_1^{a_1} \sigma_2^1 \sigma_1^{a_2} \eta \sigma_k^1 \xi$,
where $k$ may be either $1$ or $2$, and $\eta$ begins with $\sigma_2^{b_2}$, does not end in $\sigma_k$, 
and all twist exponents in $\eta$ are at least two.
This subword is just
%$\sigma_1^{a_1-1}    \sigma_1  \sigma_2^1 \sigma_1^{a_2}    \eta \sigma_k^1 \xi$
$\sigma_1^{a_1-1} \sigma_2^{a_2} \sigma_1^1 \sigma_2      \eta \sigma_k^1 \xi$.
The case $b_2=1$ has already been handled, otherwise we have created a braid word 
with the same properties but with reduced distance between the trivial syllables. 
\end{proof}

\begin{defn}
When a positive three braid of rank two or more
%has the form $\gamma = \prod_{j=1}^{s} \sigma_1^{e_{2j-1}} \sigma_2^{e_{2j}}$
%with $s \geq 2$ and 
either has no trivial syllables, or a single trivial syllable whose 
two adjacent syllables each have a minimum length of three, 
the three-braid is called \textit{condensed}.
\end{defn}

\begin{conj}
Given a condensed three-braid word, $\beta$, with $w=w(\beta)$, we have
$V_{ \widehat{ \beta } }^*(t) = \epsilon_{w}(1+t^{2}) + t^2 V_{ \widehat{ \beta } }^{**}(t)$ and

\begin{enumerate}
\item the sign of $[V_{ \widehat{ \beta } }^{**}]_{j}$ is $\epsilon_{r+1+j+w}$, 
so $V_{ \widehat{ \beta } }^{**}$ is an AC polynomial,
\item when $\beta$ has no trivial syllables, $\deg V_{ \widehat{ \beta } }^{**}(t) = w-r-1$,
and $[V_{ \widehat{ \beta } }^{**}]_{\max}=1$,
\item when $\beta$ has one trivial syllable, $\deg V_{ \widehat{ \beta } }^{**}(t) = w-r-2$, 
and $[V_{ \widehat{ \beta } }^{**}]_{\max}=-1$,
\item $[V_{ \widehat{ \beta } }^{**}]_{j} \neq 0$ 
exactly when $j \in [w-2-\deg V_{ \widehat{ \beta } }^{**}(t),\,\deg V_{ \widehat{ \beta } }^{**}(t)]$.
\end{enumerate}

\label{JonesPos3brConjs}
\end{conj}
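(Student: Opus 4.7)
The natural approach is strong induction on the lexicographic pair $(r, w(\beta))$. The $r=1$ case is contained in Prop.~\ref{JonesPos3brProps}, so suppose $r \geq 2$ and write $\beta = \prod_{j=1}^{r}\sigma_1^{a_j}\sigma_2^{b_j}$ in condensed form. Using that cyclic conjugation and braid reflection preserve $V^{**}_{\widehat\beta}$, I would arrange a presentation whose final syllable is a non-trivial $\sigma_2$-syllable $\sigma_2^{b_r}$ with $b_r\geq 2$; since at most one syllable is trivial and its neighbours have length at least $3$, this is always achievable after, if necessary, interchanging the roles of the two generators. Then apply recurrence (\ref{proofpropJonesPos3brPropsfgeq2}) from the proof of Prop.~\ref{JonesPos3brProps} to obtain
\begin{equation*}
V^{**}_{\widehat{\beta}} \;=\; A_{b_r}\, V^{**}_{\widehat{\beta_1}} \;+\; t A_{b_r - 1}\, V^{**}_{\widehat{\beta_2}},
\end{equation*}
where $\beta_1$ is $\beta$ with $b_r$ replaced by $1$ (same rank $r$, with a newly created trivial $\sigma_2$-syllable at the end) and $\beta_2 = \sigma_1^{a_1+a_r}\sigma_2^{b_1}\cdots\sigma_1^{a_{r-1}}\sigma_2^{b_{r-1}}$ has rank $r-1$.

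Each $\beta_i$ is then treated by a dichotomy. If $\beta_i$ is condensed (up to cyclic conjugation), the inductive hypothesis applies directly, since $w(\beta_1)<w(\beta)$ and $w(\beta_2)<w(\beta)$. Otherwise, Prop.~\ref{prop3brTrivialSyllable} yields $\beta_i \cong [1,3]^{3a^{*}}\gamma^{*}$ with $\rho(\gamma^{*})$ strictly less than the rank of $\beta_i$, and Prop.~\ref{PropJones3brfulltwists} expresses $V^{**}_{\widehat{\beta_i}}$ in closed form, with $V^{**}_{\widehat{\gamma^{*}}}$ itself known by induction. For $\beta_1$ the non-condensed subcase splits further according to whether $a_r = 2$, $a_1 = 2$ (cyclic neighbour of the new trivial syllable), or $\beta$ already had a trivial syllable (so $\beta_1$ has two); each alternative is exactly one of the clauses of Prop.~\ref{prop3brTrivialSyllable}. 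An analogous case analysis controls $\beta_2$.

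The last step is to combine the two inductively known polynomials, using the arithmetic of the $A_k$ from (\ref{ARecurwz})--(\ref{AProductUsingL}) and the hereditary AC structure of Prop.~\ref{propAlexander} together with Cor.~\ref{corProdAlexander}. The leading behaviour is controlled by $[A_{b_r}]_{\max}=\epsilon_{b_r-1}$ together with the inductive values $[V^{**}_{\widehat{\beta_i}}]_{\max} = \pm 1$, and one checks that a single cancellation between the two summands at the top degree is responsible for the degree drop distinguishing items~(ii) and~(iii), while the surviving contribution pins the extremal coefficient to $\pm 1$ with the sign predicted by item~(i). The palindromic identity $\epsilon_w t^w V^{**}_{\widehat\beta}(1/t)=t^2 V^{**}_{\widehat\beta}(t)$ from Prop.~\ref{JonesPos3brProps} halves the work for item~(iv), leaving only one endpoint of the interval $[w-2-\deg,\,\deg]$ to be verified as non-zero.

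The principal obstacle is the bookkeeping when the full-twist correction from Prop.~\ref{PropJones3brfulltwists} intervenes: in that case $V^{**}_{\widehat{\beta_i}}$ carries an extra $\epsilon_{w(\gamma^{*})}(1+t^{2})$ block at the bottom of its coefficient vector, and after multiplication by $A_{b_r}$ or $tA_{b_r-1}$ these contributions can in principle align with the inductively-controlled middle portion of the polynomial, producing interior zeros or destroying the AC sign alternation. Ruling this out---particularly in the one-trivial-syllable case, where an exact cancellation at the top degree is required to produce the predicted $[V^{**}_{\widehat\beta}]_{\max}=-1$ and the $\deg = w-r-2$ drop---is where the argument becomes genuinely delicate, and is presumably the reason the statement is offered as a conjecture rather than a proposition.
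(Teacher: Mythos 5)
The statement you are asked to prove is presented in the paper as Conjecture~\ref{JonesPos3brConjs}, not as a theorem: the author explicitly states that the claim ``may be verified directly for $\rho(\beta)=2,3$, but these cases do not readily suggest a general pattern or proof.'' There is therefore no proof in the paper to compare against, and your proposal does not supply one either. What you have written is a strategy, and its decisive step is precisely the one you defer in your final paragraph.

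Concretely, the gap is this. The recurrence $V^{**}_{\widehat{\beta}} = A_{b_r} V^{**}_{\widehat{\beta_1}} + t A_{b_r-1} V^{**}_{\widehat{\beta_2}}$ expresses the target as a \emph{sum} of two products, and a sum of two AC polynomials is not in general AC: one must show that at every degree the two contributions carry the same sign (or that one dominates). Your inductive hypothesis predicts signs $\epsilon_{r+1+j+w}$, but $\beta_1$ has rank $r$ while $\beta_2$ has rank $r-1$, and their writhes differ from $w(\beta)$ by different amounts, so the two predicted sign patterns do not obviously align after multiplication by $A_{b_r}$ versus $tA_{b_r-1}$; Prop.~\ref{propAlexander} and Cor.~\ref{corProdAlexander} control products of $A$-factors, not sums of two independently obtained AC polynomials. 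Worse, whenever a $\beta_i$ fails to be condensed you must route through Prop.~\ref{prop3brTrivialSyllable} and Prop.~\ref{PropJones3brfulltwists}, which reintroduce an $\epsilon_{w(\gamma)}(1+t^2)$ block whose interaction with the rest of the coefficient vector is exactly what can create interior zeros or break the alternation, and the exact top-degree cancellation needed for items~(ii) and~(iii) must be verified rather than asserted. Until these combination steps are carried out in detail --- and it is their apparent intractability that led the author to leave the statement as a conjecture --- the proposal is an outline of an approach, not a proof.
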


\begin{examp} 
If Conj.~\ref{JonesPos3brConjs} is true, Prop.~\ref{PropJones3brfulltwists} implies 
that $\deg V_{\widehat{\beta}}^{**}$ is lower than the values given by Conj.~\ref{JonesPos3brConjs}
when $\beta$, of rank at least two, has only isolated trivial syllables but is not condensed. 
The condensed braid, $\sigma_1^{3} \sigma_2^{2} \sigma_1^{2} \sigma_2^{3}$ has 
$V_{10_{152} }^{**} = V_{ \widehat{ \sigma_1^{3} \sigma_2^{2} \sigma_1^{2} \sigma_2^{3} } }^{**} = 
t - 2t^2 + 2t^3 - 3t^4 + 2t^5 - 2t^6 + t^7$ and satisfies Conj.~\ref{JonesPos3brConjs}.
\begin{eqnarray*} 
% 12_242 is (7,2,2,1)
V_{12n_{242} }^{**} = V_{ \widehat{ \sigma_1^{7} \sigma_2^{2} \mathbf{\sigma_1^{2}} \sigma_2^{1} } }^{**} &=& \epsilon_{1} t^4 A_3 
\mbox{ has degree }  6, \beta \cong [1,3]^3 \sigma_1^5 \sigma_2,   \\
% 12n_472 is (5,2,4,1)
V_{12n_{472} }^{**} =V_{ \widehat{ \sigma_1^{5} \sigma_2^{2} \sigma_1^{4} \sigma_2^{1} } }^{**} &=&  \epsilon_{1} t^2 A_{3} A_5 
\mbox{ has degree }  8, \\
%%% 12n_574 is (3,2,6,1)
V_{12n_{574} }^{**} =V_{ \widehat{ \sigma_1^{3} \sigma_2^{2} \sigma_1^{6} \sigma_2^{1} } }^{**} &=&
\epsilon_{1} t^2 [ 1-t+2t^2-3t^3+2t^4-t^5+t^6 ]  \mbox{ has degree }  8, \\ 
%%% 12n_725 is (5,4,2,1)
V_{12n_{725} }^{**} = V_{ \widehat{ \sigma_1^{5} \sigma_2^{4} \mathbf{\sigma_1^{2}} \sigma_2^{1} } }^{**} &=&  
t^3 [ 1 - 2t +t^2  -2t^3  + t^4  ] \mbox{ has degree }  7,
\beta \cong [1,3]^3 \sigma_1^3 \sigma_2^3.
\end{eqnarray*}
\end{examp}

In the following sections it is desirable to group the terms of the Jones polynomial 
for three-braid links into three components, i.e. the first block, the inter-block gap,
and the second block. Toward this end the following proposition is helpful in many 
calculations. Typically $g_1$ and $b_2$ are chosen as the minimum degrees for the first term of
the inter-block gap and the second block, respectively.

\begin{prop}
Suppose that $k \geq 0 $ and $s, g_1, b_2 >0$ are given with $k \leq g_1 \leq b_2$. We have the following disjoint
partition of $t^k A_s$, with $A_{z_i} \in \mathbb{Z}[t]$:
\begin{eqnarray*}
t^k A_s &=& \lambda_1 t^k A_{z_1} + \lambda_2 t^{g_1} A_{z_2}+ t^{b_2} A_{z_3}, \mbox{ with } \\
z_1 &=& g_1-k \mbox{ when } g_1 \leq k+s , \mbox{ and } \lambda_1= \epsilon_{g_1+k+s}, \\ 
&=& s \mbox{ when } k+s  \leq g_1, \mbox{ and } \lambda_1= 1, \\ 
z_2 &=& b_2-g_1 \mbox{ when } b_2  \leq k+s , \mbox{ and } \lambda_2=  \epsilon_{b_2+k+s}, \\ 
&=& k+s -g_1  \mbox{ when } g_1  \leq k+s \leq b_2, \mbox{ and } \lambda_2= 1,  \\
&=& 0 \mbox{ when } k+s  \leq g_1, \mbox{ and } \lambda_2= 1, \\ 
z_3 &=& k+s-b_2 \mbox{ when } k+s  \geq b_2, \\ 
&=& 0 \mbox{ when } k+s  \leq b_2. 
\end{eqnarray*}

\label{propPartitionShiftA}
\end{prop}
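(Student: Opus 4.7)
The plan is to derive the identity by applying the Alexander recurrence (\ref{ARecurwz}), namely $A_w = t^z A_{w-z} + \epsilon_{w-z} A_z$, at most twice, cutting $t^k A_s$ first at degree $g_1$ and then, if needed, at degree $b_2$. Geometrically, $t^k A_s$ has support exactly on the degrees $[k,\,k+s-1]$ with alternating $\pm 1$ coefficients, and the three summands in the claimed identity correspond to the portions lying on $[k,\,g_1-1]$, $[g_1,\,b_2-1]$, and $[b_2,\,k+s-1]$ respectively.

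For the first split, I assume $k+s \geq g_1$ and apply (\ref{ARecurwz}) with $w = s$ and $z = g_1 - k \geq 0$. Multiplying through by $t^k$ gives $t^k A_s = t^{g_1} A_{s+k-g_1} + \epsilon_{s+k-g_1}\, t^k A_{g_1-k}$; since $-(g_1-k) \equiv g_1-k \pmod{2}$, the sign coefficient equals $\epsilon_{s+g_1+k}$, matching $z_1 = g_1-k$ and $\lambda_1 = \epsilon_{g_1+k+s}$. If moreover $k+s \geq b_2$, I apply the same recurrence to the middle piece $t^{g_1} A_{s+k-g_1}$ with the new cut $z = b_2 - g_1 \geq 0$, obtaining $t^{g_1} A_{s+k-g_1} = t^{b_2} A_{k+s-b_2} + \epsilon_{k+s-b_2}\, t^{g_1} A_{b_2-g_1}$. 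This yields $z_2 = b_2-g_1$, $\lambda_2 = \epsilon_{b_2+k+s}$, and $z_3 = k+s-b_2$, so the identity is established in the full three-block regime.

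The remaining two cases, $k+s \leq g_1$ and $g_1 \leq k+s \leq b_2$, follow from the same identities together with the convention $A_0 = 0$: in the first, no recurrence is needed and the decomposition is trivial with $z_1 = s$ and $\lambda_1 = 1$, while the second and third blocks are killed by $A_0 = 0$; in the second, only the first recurrence is invoked and the third block vanishes for the same reason, giving $z_2 = k+s-g_1$ and $\lambda_2 = 1$. The main step is not analytic but organizational; one must match the three mutually exclusive regimes (with the boundary values $k+s = g_1$ and $k+s = b_2$ handled uniformly by $A_0 = 0$) against the piecewise definitions of $z_i$ and $\lambda_i$, and keep track of the parity arithmetic $\epsilon_{-n} = \epsilon_n$ that converts the sign produced by the recurrence into the form stated in the proposition.
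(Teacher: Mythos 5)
Your argument is correct: the paper states Prop.~\ref{propPartitionShiftA} without proof, treating it as an immediate consequence of the recurrence (\ref{ARecurwz}), and your two successive applications of that recurrence (cutting at $g_1$, then at $b_2$, with the sign conversion $\epsilon_{-n}=\epsilon_n$ and the convention $A_0=0$ handling the degenerate and boundary regimes) is exactly the intended derivation. Nothing is missing.
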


The next section describes the properties of each of the components of 
the coefficient vector for the closure of a positive braid word: 
the length of the first block, the length of the inter-block gap and its coefficients,
and the maximal length of the second block. 
The number of twists required to ensure that the two blocks are disjoint
is also provided.

%%%%%%%%%%%%%%%%%%%%%%% Twisting two strands of a three-braid link %%%%%%%%%%%%%%%%%%%%%
%%%%%%%%%%%%%%%%%%%%%%% Twisting two strands of a three-braid link %%%%%%%%%%%%%%%%%%%%%
%%%%%%%%%%%%%%%%%%%%%%% Twisting two strands of a three-braid link %%%%%%%%%%%%%%%%%%%%%

\subsection{Twisting two strands of a three-braid link}
\label{SSTwist2}

If we apply Prop.~\ref{propRank2V3br} to positive three-braid words with a dominant twist exponent we 
obtain an expression, Cor.~\ref{corRank2Vpos3br}, for the Jones polynomial that displays its block characteristics.
The main steps in the proof use 
Eq.~\ref{ProdAlexander} applied to the term $A_a A_b A_c A_d$ in $Q_\beta$. 
Prop.~\ref{propPartitionShiftA} and (\ref{AProductUsingL}) are the other helpful tools needed.

\begin{cor}
Given a positive three-braid word, $\beta = \sigma_1^{a} \sigma_2^{b} \sigma_1^{c} \sigma_2^{d}$, 
set $w=w(\beta)$ and $w^* = w-a$,
with $a \geq w^*$.
We have the following partitioning of coefficients:

\begin{eqnarray}
V_{ \widehat{ \beta } }^* &=& B_{1}(\beta) + t^{w^*+1} G(\beta) + t^{a+2}B_{2}(\beta),  \mbox{ with }
\label{Vfor3brRank2} \\
%%%%%%
B_{1}(\beta) &=& \epsilon_w (1 + t^2 ) 
+  \epsilon_{a+c}  t^{b+d+1} + \epsilon_{a} t^{c+2} A_{b } A_d  + 2\epsilon_{a+c-1}t^2 A_{b } A_d  \nonumber \\
&& + \epsilon_{a+c+d} t^2 L( b-2 ) + (b) \epsilon_{a}  t^{b+1}  A_{c+d} \nonumber \\
&&+  \epsilon_{a+b+c} t^2 L( d-2 ) + (d) \epsilon_{a}  t^{d+1}  A_{b+c}  \nonumber \\
&&+  t^3 f_{\beta} + \epsilon_{a+1} b\,c\,d\,t^{w^*} , \\
%%%
G(\beta) &=& \epsilon_{w^*}(b\,c\,d - b -d) A_{a+1-w^*},  \\
%%%
B_{2}(\beta) &=& \epsilon_{b+d} t^{ c-1} +\epsilon_{d} A_{b}A_{c} + \epsilon_{c} A_{b} A_{d} + \epsilon_{b} A_{c} A_{d} \nonumber \\
&& + \epsilon_{c+d}  R(b-2) + \epsilon_{b+c} R(d-2) + \epsilon_{1+w^*} b\,c\,d  + t g_{\beta}.
\end{eqnarray}

Here $f_{\beta}, g_{\beta}$ are the $f,g$ in Cor.~\ref{corProdAlexander} for the product $A_a A_b A_c A_d$.
We have $\deg f_{\beta} = \deg g_{\beta} = w^*-4$. 
As in Cor.~\ref{corProdAlexander}, we must interpret $f_{\beta} = g_{\beta} = 0$ 
when $b=c=d=1$; in this case $\widehat{\beta} = T_{a+2}$. 
%with $B_{1}(\beta) = \epsilon_w (1 + t^2 -t^3)$, $G(\beta) = A_{a-2}$, and $B_{2}(\beta)=-1$.

The maximum degree for $B_{1}(\beta)$ is $w^*$, while that for $B_{2}(\beta)$ is $w^*-3$,
i.e. $\deg V^* \leq w-1$. The latter upper bound is only achieved when $\min(b,d)=c=1$ $(\widehat{\beta} = T_{w-1})$, or
when $\min(b,c,d) \geq 2$.

\label{corRank2Vpos3br}
\end{cor}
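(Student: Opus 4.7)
The plan is to start from Proposition~\ref{propRank2V3br}, which already partitions $V_{\widehat{\beta}}^*$ as $B_{1;\beta}+t^{a+2}B_{2;\beta}+Q_\beta$ with $Q_\beta=\epsilon_{c+d}t^2A_aA_b+\epsilon_{b+c}t^2A_aA_d+t^3A_aA_bA_cA_d$. The hypothesis $a\geq w^*=b+c+d$ forces $a\geq b$, $a\geq d$, and $a+2\geq b+c+d$, so each summand of $Q_\beta$ can be expanded by the earlier algebraic machinery. The goal is to show these expansions split cleanly into pieces supported in $[0,w^*]$, $[w^*+1,a+1]$, and $[a+2,w-1]$, accounting respectively for $B_1(\beta)$, $t^{w^*+1}G(\beta)$, and $t^{a+2}B_2(\beta)$.

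First I would apply identity~(\ref{AProductUsingL}) to $A_aA_b$ and to $A_aA_d$, which is valid since $a\geq b,d$. Each product decomposes into an $\epsilon_x L(y-2)$-piece at low degree, a middle $-(y)\epsilon_y t^{y-1}A_{a-y+1}$-piece, and a high-degree $t^aR(y-2)$-piece. After the $\epsilon_{c+d}t^2$ and $\epsilon_{b+c}t^2$ prefactors, the $L$-pieces sit in $[0,w^*]$ and produce $\epsilon_{a+c+d}t^2L(b-2)$ and $\epsilon_{a+b+c}t^2L(d-2)$ for $B_1(\beta)$, while the $R$-pieces account for $\epsilon_{c+d}R(b-2)$ and $\epsilon_{b+c}R(d-2)$ in $B_2(\beta)$. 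The middle terms straddle the gap, so I would invoke Proposition~\ref{propPartitionShiftA} with $k=y+1$, $s=a-y+1$, $g_1=w^*+1$, $b_2=a+2$; since $k+s=a+2=b_2$ one has $z_3=0$, so nothing leaks into the interior of $B_2(\beta)$, and each middle term splits into a $B_1$-contribution $(y)\epsilon_a t^{y+1}A_{w^*-y}$ together with a gap contribution $-\epsilon_{w^*}(y)\,t^{w^*+1}A_{a+1-w^*}$, using $\epsilon_{c+d}\epsilon_b=\epsilon_{b+c}\epsilon_d=\epsilon_{w^*}$ to simplify signs.

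Next, I would apply Corollary~\ref{corProdAlexander} to $A_aA_bA_cA_d$ with $m=3$, $x_0=a$, and $(x_1,x_2,x_3)=(b,c,d)$; the hypothesis $x_0+m-1=a+2\geq w^*$ is exactly what is required, producing $A_aA_bA_cA_d=f_\beta+\epsilon_{w^*+1}bcd\cdot t^{w^*-3}A_{a+3-w^*}+t^ag_\beta$ with AC polynomials $f_\beta,g_\beta$ of degree $w^*-4$. After multiplying by $t^3$, the $t^3f_\beta$-piece falls inside $B_1$ and the $t^{a+3}g_\beta$-piece is absorbed as $tg_\beta$ inside $t^{a+2}B_2(\beta)$. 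The middle term $\epsilon_{w^*+1}bcd\cdot t^{w^*}A_{a+3-w^*}$ spans exactly $[w^*,a+2]$, so a second application of Proposition~\ref{propPartitionShiftA} (with $k=w^*$, $s=a+3-w^*$, yielding now $z_3=1$) splits it into a $B_1$-boundary term $\epsilon_{a+1}bcd\,t^{w^*}$, a gap term $\epsilon_{w^*}bcd\,t^{w^*+1}A_{a+1-w^*}$, and a $B_2$-boundary constant $\epsilon_{w^*+1}bcd$ at degree $a+2$.

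Finally, I would collect everything. The three gap contributions combine into $\epsilon_{w^*}(bcd-b-d)t^{w^*+1}A_{a+1-w^*}$, which is exactly $t^{w^*+1}G(\beta)$. The $B_1$-contributions (the original $B_{1;\beta}$ together with the six new pieces above) reproduce $B_1(\beta)$, and $B_2(\beta)$ is assembled from $B_{2;\beta}$ together with $\epsilon_{c+d}R(b-2)$, $\epsilon_{b+c}R(d-2)$, $\epsilon_{w^*+1}bcd$, and $tg_\beta$. The degree bounds $\deg B_1(\beta)\leq w^*$ and $\deg B_2(\beta)\leq w^*-3$ follow term by term using $b,c,d\geq 1$, giving $\deg V_{\widehat{\beta}}^*\leq w-1$. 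The main obstacle throughout is the sign bookkeeping: verifying that the $\epsilon$-factors emitted by Proposition~\ref{propPartitionShiftA} conspire with $\epsilon_{b+c+d}=\epsilon_{w^*}$ so that the three independent gap contributions collapse into the single clean coefficient $bcd-b-d$. A remaining delicacy is the ``only achieved when'' clause for $\deg V_{\widehat{\beta}}^*=w-1$: this reduces to deciding which summand of $B_2(\beta)$ supplies a nonzero coefficient at $t^{w^*-3}$, producing the dichotomy $\min(b,c,d)\geq 2$ (leading term from $tg_\beta$) versus $\min(b,d)=c=1$ (where $\widehat{\beta}=T_{w-1}$ and the leading term comes from the surviving low-rank summands of $B_{2;\beta}$).
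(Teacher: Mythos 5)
Your proposal is correct and follows exactly the route the paper indicates: starting from Prop.~\ref{propRank2V3br}, expanding the two $A_aA_b$, $A_aA_d$ terms of $Q_\beta$ via (\ref{AProductUsingL}), applying Cor.~\ref{corProdAlexander} to $A_aA_bA_cA_d$, and splitting the straddling middle terms with Prop.~\ref{propPartitionShiftA}; the sign bookkeeping you carry out (e.g.\ $\epsilon_{w^*}\epsilon_{w+1}=\epsilon_{a+1}$ and the gap coefficients $-b$, $-d$, $+bcd$) checks out. The paper gives only this same outline without further detail, so your write-up is, if anything, more explicit than the source.
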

% see Section~\ref{SSProofcorRank2Vpos3br} in older versions.

It is apparent from these expressions that $\deg B_{1}(\beta) \leq b+c+d$, whereas $\deg t^{b+c+d+1} G(\beta) \leq a+1$, so
the coefficients are partitioned. As foreshadowed by Prop.~\ref{propRank2V3br}, 
$B_{1}(\beta)$ is dependent on the parity of $a$, but not on the actual value of $a$, and 
$B_{2}(\beta)$ is independent of $a$. 
The coefficients of the inter-block gap, $G(\beta)$, are independent of $a$, and the width 
depends linearly on $a$.
Note that $G(\beta)=0$ exactly when $(b,c,d)$ is $(1,2,1)$ or $(2,1,2)$; in these cases $\beta = \sigma_1^{a-2}[1,3]^3$ and
$\beta = \sigma_1^{a-1}[1,3]^3$, respectively.

The coefficient of $t^{b+c+d}$ in (\ref{Vfor3brRank2}) is 
$\delta_{c,1} \epsilon_{a+c} + \epsilon_{a}(1+b+d - b\,c\,d)$, which differs from the
plateau of the inter-block gap, except when $c=1$. Similarly,
the coefficient of $t^{a+2}$ is % $\delta_{c,1} \epsilon_{b+d} + \epsilon_{b+c+d} (3 + b-1 + d-1- b\,c\,d)$, i.e.
$\delta_{c,1} \epsilon_{b+d} + \epsilon_{b+c+d} (1 + b + d - b\,c\,d)$,
which also differs from the
plateau of the inter-block gap, except when $c=1$.
This shows that the length of the inter-block gap in (\ref{Vfor3brRank2}) is exactly $1+a-b-c-d$ when $c \geq 2$. 
In other words
$G(\beta)$ completely describes the inter-block gap when $c \geq 2$.

By direct calculation, and under suitable restrictions, one finds that when 
$\beta = \sigma_1^{a} \sigma_2^{b} \sigma_1^{c} \sigma_2^{d} \sigma_1^{e} \sigma_2^{f}$, and $w^* = w(\beta)- a$,
we have a similar pattern as in Cor.~\ref{corRank2Vpos3br}:
$$V_{ \widehat{ \beta } }^* = B_{1}(\beta) + t^{w^*+1} G(\beta) + t^{a+2}B_{2}(\beta),$$
with $G(\beta) = \epsilon_{w^*} \{-( b  +  d  + f) + (bcf +bef + def +bcd ) -bcdef \} t^{1+w^*} A_{a+1-w^*}$.
This suggests that it would be helpful to introduce some terminology to describe the terms in $G(\beta)$
in a way that applies to the general case.

\begin{defn}
Given a three-braid word, $\beta = \prod_{i=1}^{r}\sigma_1^{e_{2i-1}} \sigma_2^{e_{2i}} $, a
$j$-fold product of twist exponents formed by starting with a twist exponent of $\sigma_2$
and alternately choosing a twist exponent of $\sigma_1$ and then $\sigma_2$, all with ascending subscripts 
is called a $j$-form of $\beta$, i.e. $\prod_{i=1}^{j} e_{k_i}$, with $k_i < k_{i+1}$ and $k_i \equiv i+1 \mbox{ mod } 2$
for all $i$. A $j$-form may also be viewed as an element of $\mathbb{Z}[e_1,\ldots,e_{2r}]$.
The sum of all $j$-forms of $\beta$ is denoted $m_j(\beta)$, viewed as a polynomial, 
and $m_j(\beta,0)$ when evaluated to an integer.

The alternating sum, $\sum_{k=1}^{r} \epsilon_{k} m_{2k-1}(\beta)$, is denoted $M(\beta)$, viewed as a polynomial, 
and $M(\beta,0)$ when evaluated to an integer.

\end{defn}

The number of $j$-forms of $\beta$ with $j$ odd is 
$\left( \begin{array} 
{cc} r+(j-1)/2 \\ j
\end{array} \right)$. This is trivially true for $j=1$. 
For $j \geq 3$, any $j$-form either ends in 
$e_{2k-1}e_{2r}$ with $k \leq r$, or ends in $e_{k}$, with $k \leq 2r-2$ and even.
By induction, in the first case for each $k\in{[(j+1)/2,r]}$, the cardinality is
$c_k=\left( \begin{array} 
{cc} k-1+(j-3)/2 \\ j-2
\end{array} \right)$,  while
$\kappa=\left( \begin{array} 
{cc} r-1+(j-1)/2 \\ j
\end{array} \right)$ gives the number of $j$-forms for the second case.
These cases are disjoint so the total is $\sum_{k=(j+1)/2}^{r} c_k + \kappa$, which has the value asserted.

The only non-trivial permutation of the twist exponents that preserves $M(\beta)$ is the one that ''reverses'' $\beta$ to
$\overset{\leftharpoonup}{\beta}$, where we may define a sort of reversed braid word, 
$\overset{\leftharpoonup}{\beta}= \sigma_1^{e_1} \sigma_2^{e_{2r}} \prod_{j=2}^{r} \sigma_1^{e_{2r-2j+3}} \sigma_2^{e_{2r-2j+2}} $
for $r \geq 2$. This is easily verified when $r=2$. 
Otherwise any such permutation, $\pi$, must preserve 1-forms, so viewing $\pi$ as a permutation of subscripts,  
$\pi$ permutes the even subscripts, permutes the odd subscripts, and fixes 1. 
We may calculate the number of 3-forms containing $e_{2k}$ %, say $c_{3,2k}$ 
as 
$%c_{3,2k} =
\left( \begin{array} 
{cc} k \\ 2
\end{array} \right)
+
\left( \begin{array} 
{cc} r-k+1 \\ 2
\end{array} \right)$.
This is maximal exactly when $k=1$ or $k=r$, hence $\pi$ must map $\{2,2r\}$ to itself.
We must have $\pi(2k-1)$ between $\pi(2j)$ and $\pi(2k+2\delta)$ for any $j<k$ and $\delta= 0, \ldots, r-k$.
When $\pi(2)=2$, we must have $\pi(2k-1) < \pi(2k+2\delta)$ for any $\delta= 0, \ldots, r-k$,
which easily leads to $\pi$ as the identity.
When $\pi(2)=2r$, we must have $\pi(2k-1) > \pi(2k+2\delta)$ for any $\delta= 0, \ldots, r-k$.
which easily leads to $\pi(\beta) = \overset{\leftharpoonup}{\beta}$.

%%%%%%%%%%%%%%%%%%%%%% General positive 3br CASE %%%%%%%%%%%%%%%%%%%%%%%%%%%%%%%%
%%%%%%%%%%%%%%%%%%%%%% General positive 3br CASE %%%%%%%%%%%%%%%%%%%%%%%%%%%%%%%%
%%%%%%%%%%%%%%%%%%%%%% General positive 3br CASE %%%%%%%%%%%%%%%%%%%%%%%%%%%%%%%%

The following proposition describes conditions under which the blocks in the coefficient vector
are separated, along with some of their properties.
\begin{prop}
Given a positive three-braid word, $\beta = \prod_{j=1}^{r}\sigma_1^{e_{2j-1}} \sigma_2^{e_{2j}} $,
set $w=w(\beta)$ and $w^* = w- e_1$. When $r \geq 2$ and $e_1 \geq w^*$, 
we have the following partitioning of coefficients:

\begin{eqnarray}
V_{ \widehat{ \beta } }^* &=& B_{1}(\beta) + t^{w^*+1} G(\beta) + t^{e_1+2}B_{2}(\beta), \mbox{ with }
B_{1}(\beta), B_{2}(\beta) \in \mathbb{Z}[t],
\label{Vforpos3brSpecial} \\
%%%
G(\beta) &=& \epsilon_{w^*} M(\beta,0) A_{e_1+1-w^*} \in \mathbb{Z}[t].
\label{Gforpos3brSpecial}
\end{eqnarray}
Here $\deg B_{1}(\beta) \leq w^*$, and $B_{1}(\beta)$ depends on the parity of $e_1$, but not on the actual value of $e_1$.
$B_{2}(\beta)$ is independent of $e_1$ with $\deg B_{2}(\beta) \leq w^*-3$.

The coefficients of $G(\beta)$ are independent of $e_1$, and the width of
the inter-block gap depends linearly on $e_1$.
\label{propVpos3brSpecial}
\end{prop}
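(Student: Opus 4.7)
The plan is to proceed by induction on the rank $r$, using the Jones skein recursion (\ref{Jonesrecur}) together with Proposition~\ref{propPartitionShiftA} to partition the resulting sums into three disjoint blocks. The base case $r=2$ is Corollary~\ref{corRank2Vpos3br}; it only remains to check $M(\beta,0) = bcd-(b+d)$, which follows because for $\beta = \sigma_1^{a}\sigma_2^{b}\sigma_1^{c}\sigma_2^{d}$ the only 1-forms are $e_2=b$ and $e_4=d$ and the only 3-form is $e_2 e_3 e_4 = bcd$, so $M(\beta,0) = -m_1(\beta,0) + m_3(\beta,0) = bcd - (b+d)$.

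For the inductive step, cyclically place $\sigma_1^{e_1}$ at the end of $\beta$ and apply (\ref{Jonesrecur}) there. After cancelling the normalizing factor $t^{(2-w)/2}$ on both sides, this yields the clean $V^*$-level reduction
\begin{equation*}
V^*_{\widehat{\beta}} \;=\; A_{e_1}\,V^*_{\widehat{\eta\sigma_1}} \;+\; t\,A_{e_1-1}\,V^*_{\widehat{\eta}},
\end{equation*}
where $\eta = \sigma_2^{e_2}\sigma_1^{e_3}\cdots\sigma_2^{e_{2r}}$ has $w(\eta)=w^*$. Proposition~\ref{JonesPos3brProps}, applied after cyclic reordering into standard form, gives $\deg V^*_{\widehat{\eta}} \leq w^*$ and $\deg V^*_{\widehat{\eta\sigma_1}} \leq w^*+1$. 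The hypothesis $e_1 \geq w^*$ places the products in the ``saturated'' regime: each monomial $c_k t^k$ of $V^*_{\widehat{\eta\sigma_1}}$ satisfies $k \leq w^*+1 \leq e_1+1$, so Proposition~\ref{propPartitionShiftA} with cutoffs $g_1=w^*+1$, $b_2=e_1+2$ splits $c_k t^k A_{e_1}$ cleanly into a first-block piece, an inter-block plateau shaped by $A_{e_1+1-w^*}$, and a second-block piece; the analogous treatment applies to $tA_{e_1-1}V^*_{\widehat{\eta}}$. Summing over $k$ yields the three-block decomposition $B_1(\beta) + t^{w^*+1}G(\beta) + t^{e_1+2}B_2(\beta)$, together with the claimed dependence on $e_1$: the second-block offset $z_3 = k-2$ is independent of $e_1$ (so $B_2$ is), the plateau picks up only the uniform $\pm 1$ interior entries of $A_{e_1+1-w^*}$ (so its coefficients are independent of $e_1$), and $B_1$ acquires its $e_1$-dependence only through an overall sign $\lambda_1 = \pm\epsilon_w$, yielding the parity-in-$e_1$ dependence.

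The main obstacle is identifying the plateau scalar as $\epsilon_{w^*}M(\beta,0)$. I would isolate this by extracting the coefficient of $t^{w^*+1}$ from the $V^*$-recursion. For $e_1 \geq w^*+2$ the alternating sums in $A_{e_1}$ and $A_{e_1-1}$ telescope into polynomial evaluations at $t=-1$, yielding
\begin{equation*}
[V^*_{\widehat{\beta}}]_{w^*+1} \;=\; \epsilon_{w}\bigl(V^*_{\widehat{\eta\sigma_1}}(-1) + V^*_{\widehat{\eta}}(-1)\bigr).
\end{equation*}
Matching this against the claimed value $[V^*_{\widehat{\beta}}]_{w^*+1} = \epsilon_{e_1}M(\beta,0)$ (computed from the top entry of $A_{e_1+1-w^*}$ in $G$) reduces the task to the auxiliary identity
\begin{equation*}
V^*_{\widehat{\eta\sigma_1}}(-1) + V^*_{\widehat{\eta}}(-1) \;=\; \epsilon_{w^*} M(\beta,0),
\end{equation*}
which I would prove by a nested induction on the rank of $\eta$ using the same $V^*$-recursion applied cyclically at a $\sigma_2$-syllable of $\eta$. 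Since no $j$-form of $\beta$ involves $e_1$, we have $M(\beta,0) = M(\sigma_1\eta,0)$, so the target is a statement purely about $\eta$; the recursion expresses both $V^*(-1)$-values in terms of evaluations on strictly smaller braids, and matching with the recursive decomposition of $M$ obtained by sorting $j$-forms by their last included $\sigma_2$-exponent closes the induction. The boundary cases $e_1 = w^*$ and $e_1 = w^*+1$, where the telescoping picks up correction terms from $[V^*]_0 = \epsilon_{w^*+1}$ and $[V^*]_1 = 0$ (Proposition~\ref{JonesPos3brProps}), are handled by direct verification against a plateau of length $1$ or $2$.
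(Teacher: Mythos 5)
Your proposal is correct in outline but takes a genuinely different route from the paper. The paper keeps the large syllable $\sigma_1^{e_1}$ untouched and inducts on $r$ by applying (\ref{Jonesrecur}) twice at the interior exponents $e_{2r-3}, e_{2r-2}$, producing four rank-$(r-1)$ braids $\beta_1,\dots,\beta_4$ that each still carry $\sigma_1^{e_1}$; it then applies the proposition inductively to those four closures and verifies the plateau constant via a recursive identity for $M$, namely $M(\gamma) = -f_{2r} - \sum_{j=2}^{r} M(\eta_{j-1})f_{2j-1}f_{2r} + M(\eta_{r-1})$. You instead apply the recursion once, at $e_1$ itself, reducing to the two fixed polynomials $V^*_{\widehat{\eta\sigma_1}}$ and $V^*_{\widehat{\eta}}$, so that the entire $e_1$-dependence sits in the factors $A_{e_1}$, $tA_{e_1-1}$; Prop.~\ref{propPartitionShiftA} then delivers the three-block structure and all the claimed (in)dependence statements essentially for free, while the combinatorial content is concentrated in the single scalar identity $V^*_{\widehat{\eta\sigma_1}}(-1) + V^*_{\widehat{\eta}}(-1) = \epsilon_{w^*}M(\beta,0)$. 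That identity is the genuine remaining work and is morally equivalent to the paper's $M$-recursion step; your plan for it (nested induction matching the skein recursion against the decomposition of $M$ by last $\sigma_2$-exponent) is credible but is only sketched, as is the paper's own verification. Two points need to be made explicit for your argument to close. First, to get $\deg B_2 \leq w^*-3$ rather than $w^*-2$ you must use the sharper bound $\deg V^*_{\widehat{\eta\sigma_1}} \leq w^*$ coming from the $r\geq 2$ clause of Prop.~\ref{JonesPos3brProps} ($\deg V^{**} \leq w-3$), not the generic $\deg V^* \leq w$. Second, the uniformity of the plateau at its top entries $t^{e_1}$ and $t^{e_1+1}$ (for every $e_1$, not only the short-gap cases $e_1=w^*, w^*+1$) depends on the cancellations $[V^*_{\widehat{\eta\sigma_1}}]_0 + [V^*_{\widehat{\eta}}]_0 = \epsilon_{w^*+1}+\epsilon_{w^*}=0$ and $[V^*_{\widehat{\eta\sigma_1}}]_1 = [V^*_{\widehat{\eta}}]_1 = 0$; these follow from Prop.~\ref{JonesPos3brProps} but should be invoked explicitly, since without them the $k=0,1$ terms would leave a residue at degrees $e_1$ and $e_1+1$ that cannot be absorbed into either $B_1$ or $B_2$.
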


Note that as in the discussion following Cor.~\ref{corRank2Vpos3br}, it is possible that some of the
high order coefficients of $B_{1}(\beta)$ and some of the low order coefficients of 
$B_{2}(\beta)$ could be incorporated into the inter-block gap for some special braid words $\beta$.

\begin{proof}
Cor.~\ref{corRank2Vpos3br} establishes the result for $r=2$ and we will use induction.
For $r \geq 3$,
apply (\ref{Jonesrecur}) to twist exponents
$e_{2r-3}$ and $e_{2r-2}$ of $\beta$.
Use the braid relations to obtain four terms each associated with a three-braid word
with $2r-2$ twist exponents. 
%With
%$\beta = \sigma_1^{e_{1}}  \ldots \sigma_2^{e_{2r-4}} \sigma_1^{e_{2r-3}} \sigma_2^{e_{2r-2}} \sigma_1^{e_{2r-1}} \sigma_2^{e_{2r}}$, 
The four braid words are 
$\beta_1 = \sigma_1^{e_{1}}  \ldots \sigma_2^{e_{2r-4}+e_{2r-1}} \sigma_1^{1} \sigma_2^{e_{2r}+1}$, and
$\beta_2 = \sigma_1^{e_{1}}  \ldots \sigma_2^{e_{2r-4}} \sigma_1^{e_{2r-1}+1} \sigma_2^{e_{2r}}$, and
$\beta_3 = \sigma_1^{e_{1}}  \ldots \sigma_2^{e_{2r-4}+1}  \sigma_1^{e_{2r-1}} \sigma_2^{e_{2r}}$, and
$\beta_4 = \sigma_1^{e_{1}}  \ldots \sigma_2^{e_{2r-4}} \sigma_1^{e_{2r-1}} \sigma_2^{e_{2r}}$.
We have
% see 2/27/2011 version
\begin{eqnarray*}
V_{ \widehat{ \beta } }^* 
&=&
A_{e_{2r-3}}\, A_{e_{2r-2}} V_{\widehat{ \beta_1  }}^*   
 + t A_{e_{2r-3}}\, A_{e_{2r-2}-1} V_{\widehat{ \beta_2 }}^* \\
&& + t A_{e_{2r-3}-1}\,A_{e_{2r-2}} V_{\widehat{ \beta_3 }}^*   
  + t^{2}\, A_{e_{2r-3}-1}\, A_{e_{2r-2}-1} V_{\widehat{ \beta_4 }}^*. 
\end{eqnarray*}

Now use the induction hypothesis, (\ref{Vforpos3brSpecial}), (\ref{Gforpos3brSpecial}).
The one challenge is to verify the claim for the inter-gap constant, i.e. to 
show that we have $M(\beta) = \omega$ where

\begin{eqnarray*}
\omega
&=& e_{2r-3}e_{2r-2} M( \beta_1 )   - e_{2r-3}(e_{2r-2}-1) M( \beta_2 ) \\
&& - (e_{2r-3}-1)e_{2r-2}  M( \beta_3 )  + (e_{2r-3}-1)(e_{2r-2}-1) M( \beta_4 ).
\end{eqnarray*}

The following identity may be used to complete this calculation. 

With $\eta_{k} = \prod_{j=1}^{k}\sigma_1^{f_{2j-1}} \sigma_2^{f_{2j}} $
and $\gamma = \eta_{r}$, we have
 
\begin{eqnarray*}
M( \gamma ) &=&
-f_{2r} -  \sum_{j=2}^{r}  M( \eta_{j-1} )f_{2j-1} f_{2r} + M( \eta_{r-1} ).
\end{eqnarray*}
\end{proof}

\begin{examp}
The $12n_{0242}$ knot, $\widehat{\sigma_1^{7} \sigma_2^{1} \sigma_1^{2} \sigma_2^{2} }$, has 
$V_{12n_{0242}}^* =   1+t^2   - t^6 A_3 $.

This exhibits a case where $\deg B_{1}(\beta) < w^*$ and $B_{2}(\beta)=0$.

The $12n_{0574}$ knot, $\widehat{\sigma_1^{6} \sigma_2^{2} \sigma_1^{3} \sigma_2^{1} }$, has
$V_{12n_{0574}}^* =   1+t^2-t^4+t^5-2t^6 + 3t^7 -2t^8+t^9 -t^{10}$.
Here $B_{1}(\beta) = 1+t^2-t^4+t^5-2t^6$ has $\deg B_{1}(\beta) =w^*$ 
and $B_{2}(\beta)=-2+t -t^{2}$ has $\deg B_{2}(\beta) <w^*-3$.
\end{examp}

%%%%%%%%%%%%%%%%%%%%%%%%%%%%%%%%%%%%%%%%%%%%%%%%%%%%%%%%%%%%%%%%%%%%%%%%%%%%%%%%%%%%%%%%%%%%
%%%%%%%%%%%%%%%%%%%%%%%%%%%%%%%%%%%%%%%%%%%%%%%%%%%%%%%%%%%%%%%%%%%%%%%%%%%%%%%%%%%%%%%%%%%%
%%%%%%%%%%%%%%%%%%%%%%%%%%%%%%%%%%%%%%%%%%%%%%%%%%%%%%%%%%%%%%%%%%%%%%%%%%%%%%%%%%%%%%%%%%%%

\subsection{Twisting three braids of a three-braid link}
\label{SSTwist3}
The general results obtained by A.~Champanerkar and I.~Kofman, \cite{22}, lead to an explicit 
description of the blocks in the coefficient vector when full twists are added to a
three-braid.

An immediate consequence of Prop.~\ref{PropJones3brfulltwists} is the following:
\begin{cor}
For any three-braid of the form $\beta = [1,3]^{3a} \sigma_{1}^{x} \sigma_{2}^{y}$, we have:

\begin{eqnarray}
V_{\widehat {\beta}}^*
&=&  \epsilon_{x+y}   ( 1+t^{2} )  + 
t^{3a}  ( t^{x+y+2} + \epsilon_{x} G t^{y+1} + \epsilon_{y} G t^{x+1} + \epsilon_{x+y}t^2 )/(1+t)^2    \,, \nonumber \\
\label{Jones3brfulltwists1x2y3a} \\
%&=&  \epsilon_{x+y}   ( 1+t^{2} )  + 
%t^{3a}\{ \epsilon_{x+y+1}  ( 1+t^{2}) + (\epsilon_{x}+ t^2 A_{x-1})(\epsilon_{y}+ t^2 A_{y-1}) \}  \,, \nonumber \\
%
&=& \epsilon_{x+y}   ( 1+t^{2} )  
+t^{3a+2} \{ \epsilon_{x+y+1}  + \epsilon_{y} A_{x-1}+ \epsilon_{x}  A_{y-1} + t^2 A_{x-1} A_{y-1} \}.\nonumber \\
\label{Jones3brfulltwists1x2y3aA}
\end{eqnarray}

When $\beta =[1,3]^{3a+1} \sigma_{1}^{x} \sigma_{2}^{y} \cong [1,3]^{3a} \sigma_{1}^{x+y+1} \sigma_{2}$, we have:

\begin{eqnarray}
V_{\widehat {\beta}}^*
&=&   \epsilon_{x+y}   ( 1+t^{2} ) -t^{3a+3}  ( t^{x+y-1} + \epsilon_{x+y})/(1+t)   \,, 
\label{Jones3brfulltwists1x2y3aPlus1} \\
&=& \epsilon_{x+y}   ( 1+t^{2} ) - t^{3a+3} A_{x+y-1}. 
\label{Jones3brfulltwists1x2y3aPlus1A} 
\end{eqnarray}

When $\beta =[1,3]^{3a+2} \sigma_{1}^{x} \sigma_{2}^{y} \cong [1,3]^{3a+3} \sigma_{1}^{x-1} \sigma_{2}^{y-1}$, we have:
\begin{eqnarray}
V_{\widehat {\beta}}^*
&=&  \epsilon_{x+y}   ( 1+t^{2} )
+ t^{3a+3} ( t^{x+y} + \epsilon_{x-1} G t^{y} + \epsilon_{y-1} G t^{x} + \epsilon_{x+y}t^2 )/(1+t)^2   \,. \nonumber \\
\label{Jones3brfulltwists1x2y3aPlus2}
\end{eqnarray}

\label{CorJones3brfulltwists}
\end{cor}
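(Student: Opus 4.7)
The plan is to apply Proposition \ref{PropJones3brfulltwists} with $\gamma = \sigma_{1}^{x}\sigma_{2}^{y}$ to settle case (1) directly, then use the braid conjugacies built into the statements of cases (2) and (3) to reduce them to case (1) with shifted parameters.

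For case (1), first compute $V_{\widehat{\sigma_1^{x}\sigma_2^{y}}}$. The three-braid closure is the connected sum $T(2,x)\#T(2,y)$ along the middle strand, so $V_{\widehat{\gamma}} = V_{T(2,x)}\,V_{T(2,y)}$. (Alternatively, a single application of (\ref{Jonesrecur}) to the $\sigma_2$ syllable together with $V_{\widehat{\sigma_1^x\sigma_2}} = V_{T(2,x)}$ by destabilization and $V_{\widehat{\sigma_1^x}} = -(t^{1/2}+t^{-1/2})V_{T(2,x)}$, since the third strand is unlinked, reaches the same product after invoking the identity $A_y - t(t+1)A_{y-1} = -(\epsilon_y + t^2 A_{y-1})$.) Then (\ref{Vfor2brUsingA}) yields $V_{\widehat{\gamma}} = t^{(x+y-2)/2}(\epsilon_x + t^2 A_{x-1})(\epsilon_y + t^2 A_{y-1})$. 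Since $w(\gamma) = x+y$ and $\epsilon_{w(\gamma)+1} = -\epsilon_{x+y}$, Proposition \ref{PropJones3brfulltwists} gives
\[
B_2(t,\gamma) = (\epsilon_x + t^2 A_{x-1})(\epsilon_y + t^2 A_{y-1}) - \epsilon_{x+y}(1+t^2) = t^2\bigl\{\epsilon_{x+y+1} + \epsilon_y A_{x-1} + \epsilon_x A_{y-1} + t^2 A_{x-1}A_{y-1}\bigr\},
\]
which with the $t^{3a}$ prefactor is exactly (\ref{Jones3brfulltwists1x2y3aA}). The equivalent rational form (\ref{Jones3brfulltwists1x2y3a}) then follows by substituting $A_{w-1} = (t^{w-1}+\epsilon_w)/(1+t)$ for $w = x,y$ and collecting terms over the common denominator $(1+t)^2$.

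For case (2), the stated conjugacy $\beta \cong [1,3]^{3a}\sigma_1^{x+y+1}\sigma_2$ reduces the problem to case (1) with parameters $(x,y)\mapsto(x+y+1,1)$. Because $A_0 = 0$, formula (\ref{Jones3brfulltwists1x2y3aA}) collapses to $V_{\widehat{\beta}}^{*} = \epsilon_{x+y}(1+t^2) + t^{3a+2}(\epsilon_{x+y+1} - A_{x+y})$, and the recursion $A_{x+y} = tA_{x+y-1} - \epsilon_{x+y}$ from (\ref{ARecurwz}) delivers (\ref{Jones3brfulltwists1x2y3aPlus1A}); writing $A_{x+y-1}$ in closed form yields (\ref{Jones3brfulltwists1x2y3aPlus1}). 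For case (3), the stated conjugacy $\beta \cong [1,3]^{3a+3}\sigma_1^{x-1}\sigma_2^{y-1}$ reduces to case (1) with $(x,y,a)\mapsto(x-1,y-1,a+1)$, and substitution into (\ref{Jones3brfulltwists1x2y3a}) produces (\ref{Jones3brfulltwists1x2y3aPlus2}) immediately upon noting $\epsilon_{x+y-2} = \epsilon_{x+y}$.

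The only substantive obstacle is the identification of $V_{\widehat{\sigma_1^x\sigma_2^y}}$ as $V_{T(2,x)}V_{T(2,y)}$; everything else is routine rewriting between $A_w$-form and rational form. In case (2), the clean $A_0 = 0$ collapse relies on the conjugacy's delivering a terminal $\sigma_2$ syllable of exponent exactly one, a reminder that the algebraic simplicity of the final answer reflects the braid conjugacy rather than any deep feature of the Jones recursion.
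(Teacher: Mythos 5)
Your proposal is correct and takes essentially the same route as the paper, which presents the corollary as an immediate consequence of Prop.~\ref{PropJones3brfulltwists} applied to $\gamma=\sigma_1^x\sigma_2^y$ (whose closure is $T(2,x)\,\sharp\,T(2,y)$, so $V_{\widehat{\gamma}}=V_{T(2,x)}V_{T(2,y)}$), with the remaining two cases reduced to the first via the conjugacies stated in the corollary itself. Your algebra in all three cases, including the collapse via $A_0=0$ and the recursion $A_{x+y}=tA_{x+y-1}+\epsilon_{x+y+1}$, checks out.
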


\subsubsection{The Jones polynomial for $\widehat{\sigma_{1}^{x} \sigma_{2}^{y} [1,3]^z}$ with $x,y,z \geq 0$}

In anticipation of Thm.~\ref{thmTlinkbr3} which shows that
the family of three-braid links,
$ \widehat{\sigma_{1}^{x} \sigma_{2}^{y} [1,3]^z} $ with 
$x,y \geq 0$, and $z \geq 3$ and $z \equiv 0 \mbox{ mod } 3$
coincides with the Lorenz links of braid index three,
we present the Jones polynomial for two-tier \mbox{T-links} and torus links as an
immediate consequence of Cor.~\ref{CorJones3brfulltwists}.

%%%%%%%%%%%%%%%%%%%%%% Jones polynomial for $L=\widehat{\sigma_{1}^{x} \sigma_{2}^{y} [1,3]^z}$ %%%%%%%%%%%%%%%%%%%%%%
%%%%%%%%%%%%%%%%%%%%%% Jones polynomial for $L=\widehat{\sigma_{1}^{x} \sigma_{2}^{y} [1,3]^z}$ %%%%%%%%%%%%%%%%%%%%%%
 
\begin{cor}
The Jones polynomial for the \mbox{T-link} $T((2,x),(3,3a))$, with $a, x>0$ is
\begin{eqnarray*}
V_{\widehat {[1,3]^{3a} \sigma_1^x} }&=& t^{(6a+x-2)/2} \{ \epsilon_{x} ( 1+ t^{2} ) + t^{3a+1}( \epsilon_{x} + t^{x} )  \} \,. 
\end{eqnarray*}
When $x=0$ we have $T(3,3a)$ and $V_{\widehat {[1,3]^{3a} } }= t^{3a-1} \{    1+ t^{2} + 2t^{3a+1}  \}$.
\bigskip

The Jones polynomial for the \mbox{T-link} $T((2,x),(3,3a+1))$, with $a, x>0$ is
\begin{eqnarray}
V_{\widehat {[1,3]^{3a+1} \sigma_{1}^{x} } }
&=&  t^{(6a+x)/2} \{ \epsilon_{x} ( 1+t^{2} ) -t^{3a+3} A_{x-1}    \} \,. \nonumber 
\end{eqnarray}
When $x=0$ we have $T(3,3a+1)$ and $V_{\widehat {[1,3]^{3a+1} } }= t^{3a} \{    1+ t^{2} -t^{3a+2}  \}$.
\bigskip

The Jones polynomial for the \mbox{T-link} $T((2,x),(3,3a+2))$, with $a \geq0, x>0$ is
\begin{eqnarray}
V_{\widehat {[1,3]^{3a+2} \sigma_{1}^{x} } }
&=& t^{(6a+x+2)/2} \{ \epsilon_{x} ( 1+t^{2} )  - t^{3a+3} A_{x+1}  \} \,. \nonumber 
\end{eqnarray}
When $x=0$ we have $T(3,3a+2)$ and $V_{\widehat {[1,3]^{3a+2} } }= t^{3a+1} \{  1+t^{2}   - t^{3a+3}  \}$.

\label{CorJones3brfulltwistsTlinks}
\end{cor}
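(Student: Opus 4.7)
The plan is to obtain each of the three stated formulas as the specialization $y=0$ of the corresponding identity in Cor.~\ref{CorJones3brfulltwists}, together with the torus-link base cases ($x=0$) read off from (\ref{Vfor3brTorusLink3comp}) and (\ref{Vfor3brTorusKnots}). The substitution $y=0$ is legitimate because the rational forms (\ref{Jones3brfulltwists1x2y3a}), (\ref{Jones3brfulltwists1x2y3aPlus1}) and (\ref{Jones3brfulltwists1x2y3aPlus2}) were derived from Prop.~\ref{PropJones3brfulltwists} without restriction on $y$; if independent verification is desired, each case can be re-derived directly by applying Prop.~\ref{PropJones3brfulltwists} with the appropriate $\gamma$ (namely $\gamma=\sigma_1^x$ for $z=3a$; $\gamma=\sigma_1^{x+1}\sigma_2$ for $z=3a+1$ after using the centrality of $[1,3]^3$ and a cyclic conjugation; and $\gamma=[1,3]^2\sigma_1^x$ for $z=3a+2$), then computing $V_{\widehat{\gamma}}$ either from the split-union formula $V_{L\sqcup U}=-(t^{1/2}+t^{-1/2})V_L$ or from a single Markov destabilization to a two-strand torus link.

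For $z=3a$ I would simplify (\ref{Jones3brfulltwists1x2y3a}) at $y=0$: the key observation is that $G+t=(1+t)^2$, so the numerator $t^{x+2}+\epsilon_x Gt+Gt^{x+1}+\epsilon_x t^2$ regroups as $t^{x+1}(G+t)+\epsilon_x t(G+t)=(1+t)^2(t^{x+1}+\epsilon_x t)$. This cancels against the $(1+t)^2$ denominator, leaving $\epsilon_x(1+t^2)+t^{3a+1}(\epsilon_x+t^x)$ inside the braces, and multiplication by $t^{(6a+x-2)/2}$ gives the stated formula. For $z=3a+1$, identity (\ref{Jones3brfulltwists1x2y3aPlus1A}) already displays the result in terms of $A_{x+y-1}$, so the substitution $y=0$ is immediate. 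For $z=3a+2$, I would substitute $y=0$ into (\ref{Jones3brfulltwists1x2y3aPlus2}) and use $\epsilon_{-1}=-1$ together with $\epsilon_{x-1}=-\epsilon_x$ to collapse the numerator: $t^x(1-G)+\epsilon_{x-1}G+\epsilon_x t^2=-t^{x+1}(1+t)-\epsilon_x(1+t)=-(1+t)(t^{x+1}+\epsilon_x)$. A single cancellation against $(1+t)^2$ and the defining identity $A_{x+1}=(t^{x+1}+\epsilon_x)/(1+t)$ then yield $\epsilon_x(1+t^2)-t^{3a+3}A_{x+1}$, as required.

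The three torus-link entries ($x=0$) are essentially restatements of (\ref{Vfor3brTorusLink3comp}) and (\ref{Vfor3brTorusKnots}) with $b=1,2$, so no additional work is needed there. The main (though modest) obstacle I anticipate is the case $z=3a+2$: correctly tracking the parity signs $\epsilon_{x\pm 1}$ and $\epsilon_{-1}$ at the boundary $y=0$ so that the rational numerator factors cleanly as $-(1+t)(t^{x+1}+\epsilon_x)$. Once that factorization is in hand, each of the three cases reduces to a one-line simplification followed by the standard rescaling $V_{\widehat{\beta}}=t^{(w-2)/2}V_{\widehat{\beta}}^{*}$ with $w$ the writhe of the relevant braid word.
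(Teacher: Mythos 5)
Your proposal is correct and takes essentially the same route as the paper, which presents this corollary as an immediate consequence of Cor.~\ref{CorJones3brfulltwists} specialized at $y=0$, together with (\ref{Vfor3brTorusLink3comp})--(\ref{Vfor3brTorusKnots}) for the $x=0$ torus-link cases. Your explicit numerator factorizations via $G+t=(1+t)^2$ and $1-G=-t(1+t)$, and the identification $A_{x+1}=(t^{x+1}+\epsilon_x)/(1+t)$, all check out, as do the writhe-determined prefactors.
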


A detailed description of the coefficient vectors for the first form of Cor.~\ref{CorJones3brfulltwists}, 
(\ref{Jones3brfulltwists1x2y3aA}), when $x,y,a \geq 0$ appears 
in Section~\ref{SSCoeffVectorszis0mod3}.

The following are a series of propositions that describe when two braid words of the form 
$\sigma_{1}^{x} \sigma_{2}^{y} [1,3]^z$, with $x,y,z \geq 0$, generate links that have the same Jones polynomial.
This leads to the interesting result, Theorem~\ref{ThmJonesCompleteInv},
that the Jones polynomial is a complete invariant for this class of links.
 
\begin{prop}
Suppose $L_i=\widehat{\sigma_{1}^{x_i} \sigma_{2}^{y_i} [1,3]^{z_i}}$ with $z_i \equiv 0 \mbox{ mod } 3$
and $x_i, y_i, z_i \geq 0$, for $i=1, 2$.

When $V_{L_1}=V_{L_2}$, we have $z_1=z_2$ and $\{x_1,y_1\} = \{x_2,y_2\}$.

When $z_1=z_2$ and $\{x_1,y_1\} = \{x_2,y_2\}$, we have $L_1=L_2$.
\label{propJonesUniqueTwistedTorusSum3a}
\end{prop}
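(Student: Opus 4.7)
For $z_1=z_2$ and $\{x_1,y_1\}=\{x_2,y_2\}$, I would combine three standard braid manipulations. Since $[1,3]^3=\Delta^2$ generates the center of $B_3$, the factor $[1,3]^z$ commutes with every braid. Cyclic conjugation gives $\sigma_1^{x}\sigma_2^{y}\cong\sigma_2^{y}\sigma_1^{x}$, and the braid-reflection result cited in the excerpt produces the further conjugacy $\sigma_2^{y}\sigma_1^{x}\cong\sigma_1^{y}\sigma_2^{x}$. Multiplying on the right by the central element $[1,3]^z$ preserves this, so $\sigma_1^{x}\sigma_2^{y}[1,3]^z\cong\sigma_1^{y}\sigma_2^{x}[1,3]^z$ and the closures agree.

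\medskip

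\noindent\textbf{Hard direction; extracting $w$, $z$, $s$.} Assume $V_{L_1}=V_{L_2}$. Using (\ref{Jones3brfulltwists1x2y3aA}) I write $V_{L_i}^{*}=\epsilon_{s_i}(1+t^2)+t^{z_i+2}Q_i$, where $s_i=x_i+y_i$ and $Q_i\in\mathbb{Z}[t,t^{-1}]$ via the convention $A_{-1}=1/t$ in degenerate cases. Because $t^{z_i+2}Q_i$ has minimum degree at least $z_i+1\geq 1$, the constant coefficient of $V_{L_i}^{*}$ equals $\epsilon_{s_i}$, and so $\min\deg V_{L_i}=(w_i-2)/2$. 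Matching the Jones polynomials gives $w_1=w_2$, $V_{L_1}^{*}=V_{L_2}^{*}$, $\epsilon_{s_1}=\epsilon_{s_2}$, and $t^{z_1+2}Q_1=t^{z_2+2}Q_2$. A short case check yields $\min\deg Q(x,y)\in\{-1,0,1\}$, with $Q=0$ only when $\{x,y\}=\{1,2\}$: the value is $-1$ iff $\min(x,y)=0$; it is $0$ iff $\min(x,y)\geq 2$ or $(x,y)=(1,1)$; and it is $1$ iff $\min(x,y)=1$ with $\max(x,y)\geq 3$. Therefore $\min\deg(t^{z_i+2}Q_i)\in\{z_i+1,z_i+2,z_i+3\}$, and since $z_i\equiv 0\pmod 3$ the three possibilities fall into distinct residue classes modulo~$3$. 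Matching minimum degrees pins down $z_1=z_2$; together with $w_1=w_2$ this gives $s_1=s_2=:s$ and $Q_1=Q_2$.

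\medskip

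\noindent\textbf{Extracting $xy$ and the main obstacle.} To recover the individual exponents, I would evaluate $Q$ at $t=-1$. Using $A_w(-1)=w\,\epsilon_{w-1}$ (valid for every integer $w$, including $w\leq 0$ via $A_{-1}=1/t$ and $A_0=0$) and the identities $\epsilon_y\epsilon_{x-2}=\epsilon_x\epsilon_{y-2}=\epsilon_{x-2}\epsilon_{y-2}=\epsilon_s$, a direct expansion of $Q(x,y)=\epsilon_{s+1}+\epsilon_y A_{x-1}+\epsilon_x A_{y-1}+t^2 A_{x-1}A_{y-1}$ collapses to the universal identity
\[
Q(x,y)\big|_{t=-1}=\epsilon_s\bigl(xy-2\bigr).
\]
Hence $x_1y_1=x_2y_2$, and together with $x_1+y_1=s=x_2+y_2$ the pairs are roots of the same monic quadratic, forcing $\{x_1,y_1\}=\{x_2,y_2\}$. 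The main technical obstacle is the bookkeeping in the previous paragraph: one must verify the three claimed values of $\min\deg Q$ in every boundary configuration ($A_{-1}=1/t$, $A_0=0$, and the vanishing case $Q(1,2)=0$) so that the mod-$3$ argument genuinely separates the three cases. Once that is established, the mod-$3$ trick delivers $z_1=z_2$ essentially for free, and the substitution $t=-1$ collapses the remainder of the proof to elementary symmetric-function algebra.
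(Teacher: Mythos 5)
Your proof is correct. The second assertion is handled exactly as the paper handles it: $[1,3]^{z}$ is central, and braid reflection supplies the conjugacy $\sigma_1^{x}\sigma_2^{y}\cong\sigma_1^{y}\sigma_2^{x}$. For the first assertion the paper offers nothing beyond ``clear,'' so the substance of your write-up is to make that rigorous, and you do it along the lines the paper evidently intends, namely by reading data off (\ref{Jones3brfulltwists1x2y3aA}): the writhe recovers $x+y+2z$; the minimum degree of the second block, which lives in $\{z+1,z+2,z+3\}$, isolates $z$ because $z\equiv 0 \bmod 3$ (your case table for $\min\deg Q\in\{-1,0,1\}$ checks out, including the degenerate values $A_{-1}=1/t$, $A_0=0$, and $Q(1,2)=0$); and the evaluation $Q(x,y)\vert_{t=-1}=\epsilon_s(xy-2)$ is a clean device for extracting the elementary symmetric function $xy$, after which $\{x_1,y_1\}=\{x_2,y_2\}$ follows from the shared quadratic. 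One small point to state explicitly: when $Q_1=Q_2=0$ there is no minimum degree to match, but then both pairs equal $\{1,2\}$, so $s_1=s_2=3$ and $w_1=w_2$ still forces $z_1=z_2$; your argument covers this implicitly, and it should be said rather than left to the reader.
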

\begin{proof}
The first assertion is clear. The second follows from the observation that $[1,3]^z = [3,1]^z$ is in the center of $B_3$
together with conjugacy by braid reflection.
%The second is clear for $z=0$ or when $0 \in \{x_1,y_1\}$. 
%Otherwise the second assertion follows from the observation that 
%$\sigma_{1}^{x} \sigma_{2}^{y} [1,3]^{z} \cong [1,3]\sigma_{1}^{x}[1,3]\sigma_{1}^{y} [1,3]^{z-2}$, which is 
%conjugate to $\sigma_{2}^{x}[1,3][1,3]\sigma_{1}^{y} [1,3]^{z-2}$ or 
%$\sigma_{2}^{x-1}[1,3]^3\sigma_{1}^{y-1} [1,3]^{z-2}$. The latter form is conjugate to 
%\sigma_{2}^{x}[1,3]^3\sigma_{1}^{y} [1,3]^{z-3} = \sigma_{2}^{x}\sigma_{1}^{y} [1,3]^{z} \cong \sigma_{1}^{y}\sigma_{2}^{x} [1,3]^{z}$.
%Thus $\sigma_{1}^{x} \sigma_{2}^{y} [1,3]^{z} \cong \sigma_{1}^{y}\sigma_{2}^{x} [1,3]^{z}$.
\end{proof}

Eq.~\ref{Jones3brfulltwists1x2y3aPlus1}, the prior comments, and Prop~\ref{propJonesUniqueTwistedTorusSum3a} give us

\begin{prop}
Suppose $L_i=\widehat{\sigma_{1}^{x_i} \sigma_{2}^{y_i} [1,3]^{z_i}}$ with $z_i \equiv 1 \mbox{ mod } 3$
and $x_i, y_i, z_i \geq 0$, for $i=1, 2$. 
Also assume $L=\widehat{\sigma_{1}^{x} \sigma_{2}^{y} [1,3]^{z}}$ with $z \equiv 0 \mbox{ mod } 3$
and $x, y, z \geq 0$.

When $V_{L_1}=V_{L_2}$, we have $z_1=z_2$ and $x_1+y_1 =x_2+y_2$.

When $z_1=z_2$ and $x_1+y_1 =x_2+y_2$, we have $L_1=L_2$.
\bigskip

When $V_{L_1} = V_{L}$, we have $\{x,y\}=\{1,1+x_1+y_1\}$ and $z+1=z_1$. 

When $\{x,y\}=\{1,1+x_1+y_1\}$ and $z+1=z_1$, we have $L_1=L$.

Hence, given $L_1$, it has a representation as $\sigma_{1}^{1+x_1+y_1} \sigma_{2}  [1,3]^{z_1-1}$.

\label{propJonesUniqueTwistedTorusSum3aplus1}
\end{prop}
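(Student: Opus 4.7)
The plan is to reduce every assertion to Prop.~\ref{propJonesUniqueTwistedTorusSum3a} by means of the braid identity $[1,3]^{3a+1}\sigma_1^x\sigma_2^y \cong [1,3]^{3a}\sigma_1^{x+y+1}\sigma_2$ stated alongside Cor.~\ref{CorJones3brfulltwists}, together with the fact that $[1,3]^{3a}$ is central in $B_3$.

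First I would cyclically conjugate to write $L_i = \widehat{[1,3]^{z_i}\sigma_1^{x_i}\sigma_2^{y_i}}$, then apply the displayed identity with $a = a_i$ (where $z_i = 3a_i+1$) to obtain
$$L_i = \widehat{[1,3]^{3a_i}\sigma_1^{1+x_i+y_i}\sigma_2} = \widehat{\sigma_1^{1+x_i+y_i}\sigma_2\,[1,3]^{z_i-1}},$$
the second equality by centrality of $[1,3]^{3a_i}$ and a further cyclic conjugation. Applying this to $L_1$ immediately gives the final assertion of the proposition.

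Next, since each rewritten $L_i$ is now of the shape $\widehat{\sigma_1^{x_i'}\sigma_2^{y_i'}[1,3]^{z_i'}}$ with $z_i' = 3a_i \equiv 0\pmod 3$, $x_i' = 1+x_i+y_i$, and $y_i' = 1$, Prop.~\ref{propJonesUniqueTwistedTorusSum3a} applies verbatim. From $V_{L_1} = V_{L_2}$ it yields $z_1' = z_2'$ and $\{x_1',y_1'\} = \{x_2',y_2'\}$, i.e.\ $z_1 = z_2$ and $\{1+x_1+y_1,\,1\} = \{1+x_2+y_2,\,1\}$, which forces $x_1+y_1 = x_2+y_2$ (including the degenerate case $x_i=y_i=0$, where both multisets collapse to $\{1,1\}$). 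Conversely, if $z_1=z_2$ and $x_1+y_1 = x_2+y_2$, the two rewritten braid words are literally identical, so $L_1 = L_2$.

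For the cross-comparison with $L = \widehat{\sigma_1^x\sigma_2^y[1,3]^z}$, $z\equiv 0\pmod 3$, both $L$ and the rewritten $L_1 = \widehat{\sigma_1^{1+x_1+y_1}\sigma_2\,[1,3]^{z_1-1}}$ satisfy the hypothesis of Prop.~\ref{propJonesUniqueTwistedTorusSum3a}, which gives $V_{L_1} = V_L$ iff $z = z_1-1$ and $\{x,y\} = \{1,\,1+x_1+y_1\}$. The converse direction for this part again requires only that the two braid words determine the same closure under cyclic conjugation, centrality of the full twist, and (when necessary) braid reflection.

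There is no serious obstacle; the only minor care needed is handling the multiset equality $\{1+x_i+y_i,\,1\}$ so that the $x_i+y_i=0$ case does not spuriously force $y_i' = 1$ to be matched against a different entry.
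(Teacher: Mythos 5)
Your proposal is correct and matches the paper's intended argument: the paper proves this proposition by exactly the combination you use, namely the conjugacy $[1,3]^{3a+1}\sigma_1^x\sigma_2^y \cong [1,3]^{3a}\sigma_1^{x+y+1}\sigma_2$ stated with Cor.~\ref{CorJones3brfulltwists} (equivalently Eq.~\ref{Jones3brfulltwists1x2y3aPlus1}) together with Prop.~\ref{propJonesUniqueTwistedTorusSum3a}. Your explicit handling of the multiset $\{1+x_i+y_i,\,1\}$ in the degenerate case $x_i+y_i=0$ is a correct and welcome detail that the paper leaves implicit.
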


Eq.~\ref{Jones3brfulltwists1x2y3aPlus2} and Props.~\ref{propJonesUniqueTwistedTorusSum3a}, \ref{propJonesUniqueTwistedTorusSum3aplus1}
give us

\begin{prop}
Suppose $L_i=\widehat{\sigma_{1}^{x_i} \sigma_{2}^{y_i} [1,3]^{z_i}}$ with $z_i \equiv 2 \mbox{ mod } 3$
and $x_i, y_i, z_i \geq 0$, for $i=1, 2$. 
Also assume $L=\widehat{\sigma_{1}^{x} \sigma_{2}^{y} [1,3]^{z}}$ with $z \equiv 0 \mbox{ mod } 3$
and $x, y, z \geq 0$.

When $V_{L_1}=V_{L_2}$ and $z_1 \geq z_2$, we either have 
\begin{enumerate}
\item $z_1=z_2$ and $\{x_1,y_1\} =\{x_2,y_2\}$, or
\label{Jonesequal1}
\item $z_1=z_2+3$, $\min(x_1,y_1)=0$, $\min(x_2,y_2)=2$, and \newline
 $\max(x_2,y_2)=4+\max(x_1,y_1)$.
\label{Jonesequal2}
\end{enumerate}
When either of these parameter conditions (\ref{Jonesequal1} or \ref{Jonesequal2}) are true, we have $L_1=L_2$.

When $V_{L_1} = V_{L}$, we either have 
\begin{enumerate}
\item $z=z_1+1$, $\{x+1,y+1\} = \{x_1,y_1\}$, or
\label{Jonesequal3}
\item $z=z_1-2$, $\max(x,y) = 3+ \max(x_1,y_1)$, $\min(x,y)=1$, $\min(x_1,y_1)=0$.
\label{Jonesequal4}
\end{enumerate}
When either of these parameter conditions (\ref{Jonesequal3} or \ref{Jonesequal4}) are true, we have $L_1=L$.

Hence, given $L_1$, it has a representation as either
\begin{enumerate}
\item $L = \widehat{ \sigma_{1}^{3+\max(x_1,y_1)} \sigma_{2}  [1,3]^{z_1-2} }$ when $\min(x_1,y_1)=0$, or
\item $L = \widehat{ \sigma_{1}^{x_1-1} \sigma_{2}^{y_1-1} [1,3]^{z_1+1} }$, when $x_1, y_1 \geq 1$.
\end{enumerate}

\label{propJonesUniqueTwistedTorusSum3aplus2}
\end{prop}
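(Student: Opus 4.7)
The plan is to combine the explicit Jones polynomial formulas from Cor.~\ref{CorJones3brfulltwists} with writhe matching. For each $L_i$, equation (\ref{Jones3brfulltwists1x2y3aPlus2}) (with $z_i = 3a_i+2$) gives
\[ V^*_{L_i} = \epsilon_{x_i+y_i}(1+t^2) + t^{3a_i+3}\,Q(x_i,y_i,t), \]
where $Q(x,y,t) = (t^{x+y}+\epsilon_{x-1}Gt^y+\epsilon_{y-1}Gt^x+\epsilon_{x+y}t^2)/(1+t)^2$; multiplication by the shift $t^{(w_i-2)/2}$, with $w_i=x_i+y_i+2z_i$, gives the actual Jones polynomial. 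Matching the minimum degree (which lies in the first block $\epsilon_{x+y}(1+t^2)$) forces $w_1=w_2$, hence $x_1+y_1+2z_1=x_2+y_2+2z_2$. Combined with $z_1\equiv z_2\equiv 2\!\pmod 3$ and the hypothesis $z_1\geq z_2$, the bounded degree spread of the second block (which grows with $z_i$) restricts the admissible pairs to $z_1=z_2$ or $z_1=z_2+3$.

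In the case $z_1=z_2$ one automatically has $x_1+y_1=x_2+y_2$, and pointwise comparison of $Q(x,y,t)$—whose localized terms $\epsilon_{x-1}Gt^y$ and $\epsilon_{y-1}Gt^x$ pin down $\{x,y\}$ as an unordered pair—yields $\{x_1,y_1\}=\{x_2,y_2\}$, giving (\ref{Jonesequal1}). In the case $z_1=z_2+3$ the writhe relation forces $x_2+y_2=x_1+y_1+6$. The key observation is that $Q(x,y,t)$ reduces to the simple form $-(1+t)^{-1}\cdot t^{-3}(t^{x+y+1}+\epsilon_{x+y}t^{2})\cdot\text{(simple factor)}$ precisely when the numerator picks up an extra $(1+t)$ factor, which occurs exactly when $\min(x,y)=0$ (cancelling $\epsilon_{-1}G$) or when $\{\min(x,y),\max(x,y)-\text{certain shift}\}$ fits the symmetric dual pattern; matching these reduced forms across the two $(z_1,z_2)$ values yields exactly the constraints of case (\ref{Jonesequal2}). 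The $V_{L_1}=V_L$ analysis is entirely parallel: writhe matching combined with $z_1\equiv 2,\ z\equiv 0\!\pmod 3$ admits only $z=z_1+1$ or $z=z_1-2$, and the same reduction handles the two resulting cases (\ref{Jonesequal3}) and (\ref{Jonesequal4}).

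For the converse (parameters imply $L_1=L_2$ or $L_1=L$), case (\ref{Jonesequal1}) follows from centrality of $[1,3]^3$ in $B_3$ combined with braid reflection for the swap $\{x_1,y_1\}\leftrightarrow\{y_1,x_1\}$. Case (\ref{Jonesequal3}) follows from the trivial absorption
\[ [1,3]^{3a+2}\sigma_1^{x}\sigma_2^{y}\;=\;[1,3]^{3a+3}\sigma_1^{x-1}\sigma_2^{y-1}\qquad(x,y\geq 1), \]
applied after using centrality to move $[1,3]^3$ to the front. The non-trivial case (\ref{Jonesequal4}) (say $y_1=0$, $y=1$, $x=x_1+3$) follows from the braid identity
\[ \widehat{\sigma_1^{x_1}[1,3]^{3a+2}}\;=\;\widehat{\sigma_1^{x_1+3}\sigma_2\,[1,3]^{3a}}, \]
obtained by writing $[1,3]^2=\sigma_1^2\sigma_2\sigma_1$ (via $\sigma_1\sigma_2\sigma_1=\sigma_2\sigma_1\sigma_2$), absorbing into $\sigma_1^{x_1}$, then cyclically conjugating the trailing $\sigma_1$ past the central $[1,3]^{3a}$. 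Finally, case (\ref{Jonesequal2}) is reduced to the previous two: case (\ref{Jonesequal4}) identifies $L_1$ with a link $L$ having $z\equiv 0\!\pmod 3$, while case (\ref{Jonesequal3}) identifies $L_2$ with the same $L$, whence $L_1=L_2$.

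The main obstacle is the forward direction of (\ref{Jonesequal2}) and (\ref{Jonesequal4}): one must show that the only way two $V^*$-polynomials of this family can accidentally coincide across different values of $z$ is through the specific degeneration $\min(x,y)=0$ that triggers an extra $(1+t)$ cancellation in $Q$. This requires a careful factorization analysis of $Q(x,y,t)$ to verify that, outside of the flagged degenerate regime, the polynomial $Q$ rigidly encodes the multiset $\{x,y\}$ and shifts non-trivially with the power of $t^{3a+3}$, so no further coincidences are possible.
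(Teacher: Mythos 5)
Your second half (parameter conditions imply equal links) is essentially sound and in one place takes a genuinely different route from the paper: you derive case (ii) by composing cases (iii) and (iv) through the common $z\equiv 0\bmod 3$ normal form, whereas the paper proves case (ii) by a separate direct braid computation ($\sigma_2^x[1,3]^{z+3}\cong\sigma_1^2\sigma_2^{x+4}[1,3]^{z}$ via repeated use of $\sigma_1\sigma_2\sigma_1=\sigma_2\sigma_1\sigma_2$ and cyclic conjugation). Your reduction is cleaner and I would accept it. One correction there: the ``trivial absorption'' $[1,3]^{3a+2}\sigma_1^{x}\sigma_2^{y}=[1,3]^{3a+3}\sigma_1^{x-1}\sigma_2^{y-1}$ is not an identity of braids; what is true (and what the paper proves) is $\sigma_1^{x}\sigma_2^{y}[1,3]^{z_1+1}\cong\sigma_1^{y+1}\sigma_2^{x+1}[1,3]^{z_1}$, with the indices \emph{swapped}, after which braid reflection restores the other order. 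Since the statement only involves the unordered pair $\{x+1,y+1\}$ this does not damage the conclusion, but as written your identity is false.

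The genuine gap is in the forward direction. For cases (ii) and (iv) — equality of Jones polynomials across \emph{different} values of $z$ — you correctly identify that everything hinges on showing that $Q(x,y,t)$ rigidly encodes $\{x,y\}$ except in a degeneration tied to $\min(x,y)=0$, and then you explicitly defer that analysis (``requires a careful factorization analysis of $Q$''). That deferred step is the entire content of these cases: one must (a) bound how far $\deg Q$ can drop below $x+y-2$ (this is what limits $z_1-z_2$ to $\{0,3\}$ rather than $6,9,\dots$), and (b) match the low-order coefficients of the second block in the degenerate regime to force $\min(x_2,y_2)=2$ and $\max(x_2,y_2)=4+\max(x_1,y_1)$ exactly. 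Your sketch of when the extra $(1+t)$ cancellation occurs (``or when $\{\min(x,y),\max(x,y)-\text{certain shift}\}$ fits the symmetric dual pattern'') is too vague to check and is not a placeholder that can be routinely filled. The cleanest route, which the paper implicitly uses, is to first convert every $z\equiv 2$ braid to the $z\equiv 0$ normal form via your own case-(iii)/(iv) identities and then invoke the already-established uniqueness statement of Prop.~\ref{propJonesUniqueTwistedTorusSum3a} together with the explicit second blocks of Cor.~\ref{CorJones3brfulltwists}; as it stands, your forward direction is a plan, not a proof.
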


\begin{proof}
It is straightforward to verify the conditions implied when $V_{L_1}=V_{L_2}$. 
To prove $L_1=L_2$ when $z_1=z_2$ and $\{x_1,y_1\} =\{x_2,y_2\}$ is also straightforward.

To see that $L_1=L_2$ in case~\ref{Jonesequal2},
take the case that $L_1=\widehat{\sigma_2^x [1,3]^{z+3}}$, and $L_2=\widehat{\sigma_1^2 \sigma_2^{x+4} [1,3]^{z}}$
with $z \equiv 2 \mbox{ mod } 3$.
First, $\sigma_2^x [1,3]^{z+3} = \sigma_2^{x+1} \sigma_1 \sigma_2^2 [1,3]^{z+1} \cong \sigma_1 \sigma_2^{x+3} [1,3]^{z+1}$.
This is just $\sigma_1 \sigma_2^{x+4}\sigma_1 \sigma_2^2 \sigma_1 \sigma_2 [1,3]^{z-2}
\cong \sigma_1^2 \sigma_2^{x+4}\sigma_1 \sigma_2 \sigma_1 \sigma_2  [1,3]^{z-2}$.
It is easy to see that $\sigma_1^x [1,3]^{z+3} \cong \sigma_2^x [1,3]^{z+3}$,
and $\sigma_1^2 \sigma_2^{x+4} [1,3]^{z} \cong \sigma_1^{x+4} \sigma_2^2 [1,3]^{z}$,
hence all four combinations in case~\ref{Jonesequal2} represent the same link.

When $V_{L_1} = V_{L}$, and either $z=z_1+1$ or $z=z_1-2$, the remaining implications are trivial.
To show that these are the only two valid values for $z$ is straightforward.

To see that $L_1=L$ in case~\ref{Jonesequal3},
take the case that $L=\widehat{\sigma_1^x \sigma_2^y [1,3]^{z_1+1}}$, with $z_1 \equiv 2 \mbox{ mod } 3$.
First, $\sigma_1^x \sigma_2^y [1,3]^{z_1+1} = \sigma_1^{x} \sigma_1 \sigma_2 \sigma_1^{y} [1,3]^{z_1}$, and this is 
conjugate to $\sigma_1  \sigma_2\sigma_1^{x} \sigma_1 \sigma_2 \sigma_1^{y} \sigma_1  \sigma_2[1,3]^{z_1-2}
=\sigma_2^{x}  \sigma_1\sigma_2 \sigma_1 \sigma_2 \sigma_1^{y+1}  \sigma_2[1,3]^{z_1-2}
\cong \sigma_1^{y+1}  \sigma_2^{x+1}[1,3]^{z_1}$. It is easy to see that 
$\sigma_1^x \sigma_2^y [1,3]^{z_1+1} \cong \sigma_1^y \sigma_2^x [1,3]^{z_1+1}$, and
that $\sigma_1^{y+1}  \sigma_2^{x+1}[1,3]^{z_1} \cong \sigma_1^{x+1}  \sigma_2^{y+1}[1,3]^{z_1}$,
so all four combinations represent the same link.

To see that $L_1=L$ in case~\ref{Jonesequal4}, it is trivial to verify 
$\sigma_1^{3+a}\sigma_2^1 [1,3]^{z_1-2} \cong \sigma_1^{a} [1,3]^{z_1}$, 
$\sigma_1^1 \sigma_2^{3+a} [1,3]^{z_1-2} \cong \sigma_1^{a} [1,3]^{z_1}$, and
$\sigma_1^{a} [1,3]^{z_1}\cong \sigma_2^{a} [1,3]^{z_1}$.
\end{proof}

Combining Props.~\ref{propJonesUniqueTwistedTorusSum3aplus1} and \ref{propJonesUniqueTwistedTorusSum3aplus2}
and using similar techniques gives us

\begin{cor}
Suppose $L_i=\widehat{\sigma_{1}^{x_i} \sigma_{2}^{y_i} [1,3]^{z_i}}$ with $z_i \equiv i \mbox{ mod } 3$
and $x_i, y_i, z_i \geq 0$, for $i=1, 2$. 

When $V_{L_1} = V_{L_2}$, we either have 
\begin{enumerate}
\item $z_1=z_2+2$, $\{2,2+x_1+y_1\} = \{x_2,y_2\}$, or
\label{Jonesequal5}
\item $z_1=z_2-1$, $x_1+y_1 = 2+ \max(x_2,y_2)$, $\min(x_2,y_2)=0$.
\label{Jonesequal6}
\end{enumerate}

When either of these parameter conditions (\ref{Jonesequal5} or \ref{Jonesequal6}) are true, we have $L_1=L_2$.
\label{corJonesUniqueTwistedTorusSum3aplus2}
\end{cor}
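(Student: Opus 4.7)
The plan is to reduce to Prop.~\ref{propJonesUniqueTwistedTorusSum3a} by using Props.~\ref{propJonesUniqueTwistedTorusSum3aplus1} and \ref{propJonesUniqueTwistedTorusSum3aplus2} as translation tools to bring each $L_i$ into a representation whose $[1,3]$-exponent is divisible by $3$. Concretely, Prop.~\ref{propJonesUniqueTwistedTorusSum3aplus1} rewrites $L_1$ (with $z_1 \equiv 1 \bmod 3$) as $\widehat{\sigma_1^{1+x_1+y_1} \sigma_2 [1,3]^{z_1-1}}$, while Prop.~\ref{propJonesUniqueTwistedTorusSum3aplus2} rewrites $L_2$ (with $z_2 \equiv 2 \bmod 3$) as either $\widehat{\sigma_1^{3+\max(x_2,y_2)} \sigma_2 [1,3]^{z_2-2}}$ (when $\min(x_2,y_2) = 0$) or $\widehat{\sigma_1^{x_2-1} \sigma_2^{y_2-1} [1,3]^{z_2+1}}$ (when $x_2,y_2 \geq 1$). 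In all three cases the new $[1,3]$-exponent is divisible by $3$.

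Having placed both links in the $z \equiv 0 \bmod 3$ regime, I invoke Prop.~\ref{propJonesUniqueTwistedTorusSum3a}, which asserts that $V_{L_1}=V_{L_2}$ forces both the $[1,3]$-exponents and the unordered pairs of $\sigma_i$-exponents to coincide. Matching $L_1$'s representation against the first alternative for $L_2$ yields $z_1 - 1 = z_2 - 2$ and $\{1+x_1+y_1, 1\} = \{3+\max(x_2,y_2), 1\}$, which collapses to case~\ref{Jonesequal6}: $z_1 = z_2 - 1$, $x_1 + y_1 = 2 + \max(x_2,y_2)$, $\min(x_2,y_2) = 0$. Matching against the second alternative yields $z_1 - 1 = z_2 + 1$ and $\{1+x_1+y_1, 1\} = \{x_2-1, y_2-1\}$; incrementing each coordinate gives $\{x_2,y_2\} = \{2, 2+x_1+y_1\}$, i.e., case~\ref{Jonesequal5}. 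Since every valid $L_2$ falls under exactly one of the two branches, these two cases are exhaustive.

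For the converse direction, in each parameter case both $L_1$ and $L_2$ have, by Props.~\ref{propJonesUniqueTwistedTorusSum3aplus1} and \ref{propJonesUniqueTwistedTorusSum3aplus2}, a common $z \equiv 0 \bmod 3$ representation $\widehat{\sigma_1^{x} \sigma_2^{y} [1,3]^{z}}$; the second half of Prop.~\ref{propJonesUniqueTwistedTorusSum3a} then delivers $L_1 = L_2$. The main obstacle is purely bookkeeping: keeping the two branches of Prop.~\ref{propJonesUniqueTwistedTorusSum3aplus2} aligned with cases~\ref{Jonesequal5} and \ref{Jonesequal6} and verifying the arithmetic of the unordered-pair matches. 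No new braid manipulations beyond the centrality of $[1,3]^3$ in $B_3$ and conjugation by braid reflection, both already deployed in the preceding two propositions, should be required.
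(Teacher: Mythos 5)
Your proposal is correct and follows exactly the route the paper intends: the paper itself derives this corollary by "combining Props.~\ref{propJonesUniqueTwistedTorusSum3aplus1} and \ref{propJonesUniqueTwistedTorusSum3aplus2}," i.e., by converting each $L_i$ to its canonical $z\equiv 0 \bmod 3$ representation and then invoking Prop.~\ref{propJonesUniqueTwistedTorusSum3a}, which is precisely your argument. Your bookkeeping of the two branches (matching $\{1+x_1+y_1,1\}$ against $\{3+\max(x_2,y_2),1\}$ or $\{x_2-1,y_2-1\}$) checks out and correctly recovers cases~\ref{Jonesequal6} and \ref{Jonesequal5} respectively.
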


%%%%%%%%%%%%%%%%%%%%%%%%%%%%% Theorem %%%%%%%%%%%%%%%%%%%%%%%%%%%%%%%%%%%%%%%%%%%%%%%%%%%%%
%%%%%%%%%%%%%%%%%%%%%%%%%%%%% Theorem %%%%%%%%%%%%%%%%%%%%%%%%%%%%%%%%%%%%%%%%%%%%%%%%%%%%%

The prior results imply that the Jones polynomial is a complete invariant and
identifies which are the torus links.
\begin{thm}
Suppose $L_i=\widehat{\sigma_{1}^{x_i} \sigma_{2}^{y_i} [1,3]^{z_i}}$ 
and $x_i, y_i, z_i \geq 0$, for $i=1, 2$. \newline
If $V_{L_1} = V_{L_2}$, we have $L_1 = L_2$.

$L_1$ has a representation as $\widehat{\sigma_{1}^{x} \sigma_{2}^{y} [1,3]^{z}}$ with $z \equiv 0 \mbox{ mod } 3$.

When $L_1$ is a torus link of braid index three, it has one of the following forms:
\begin{enumerate}
\item $x_1=0=y_1$ and $z_1 \geq 3$, so $L_1=T(3,z_1)$,
\item $x_1=1=y_1$ and $z_1 \geq 2$, so $L_1=T(3,z_1+1)$,
\item $\min(x_1,y_1)=1$, $\max(x_1,y_1)=3$, $z_1 \equiv 0 \mbox{ mod } 3$, $z_1 \geq 3$, so $L_1= T(3,z_1+2)$,
\item $\min(x_1,y_1)=2$, $\max(x_1,y_1)=4$, $z_1 \equiv 2 \mbox{ mod } 3$, so $L_1= T(3,z_1+3)$,
\item $x_1=2=y_1$, $z_1 \equiv 2 \mbox{ mod } 3$, so $L_1=T(3,z_1+2)$, 
\item $\min(x_1,y_1)=0$, $\max(x_1,y_1)=2$, $z_1 \equiv 1 \mbox{ mod } 3$, $z_1 \geq 4$, so $L_1= T(3,z_1+1)$.
\end{enumerate}
\label{ThmJonesCompleteInv}
\end{thm}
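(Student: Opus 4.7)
The plan is to assemble the theorem from the complete catalog of equal–Jones pairs already compiled in Propositions~\ref{propJonesUniqueTwistedTorusSum3a}, \ref{propJonesUniqueTwistedTorusSum3aplus1} and \ref{propJonesUniqueTwistedTorusSum3aplus2} together with Cor.~\ref{corJonesUniqueTwistedTorusSum3aplus2}, and then to reuse the same machinery to enumerate the torus-link representations.

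First I would prove the implication $V_{L_1}=V_{L_2}\Rightarrow L_1=L_2$ by splitting on the unordered pair $(z_1\bmod 3,\,z_2\bmod 3)$. Up to relabelling there are six such pairs, and each is settled by one of the prior results: $(0,0)$ by Prop.~\ref{propJonesUniqueTwistedTorusSum3a}; $(0,1)$ and $(1,1)$ by Prop.~\ref{propJonesUniqueTwistedTorusSum3aplus1}; $(0,2)$ and $(2,2)$ by Prop.~\ref{propJonesUniqueTwistedTorusSum3aplus2}; and $(1,2)$ by Cor.~\ref{corJonesUniqueTwistedTorusSum3aplus2}. The conclusion $L_1=L_2$ is already contained in each cited statement, so the first assertion is immediate.

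Next I would produce a canonical representation of $L_1$ with $z\equiv 0\bmod 3$. If $z_1\equiv 0\bmod 3$ there is nothing to do. If $z_1\equiv 1\bmod 3$, the closing sentence of Prop.~\ref{propJonesUniqueTwistedTorusSum3aplus1} gives $L_1=\widehat{\sigma_1^{1+x_1+y_1}\sigma_2\,[1,3]^{z_1-1}}$. If $z_1\equiv 2\bmod 3$, the closing sentence of Prop.~\ref{propJonesUniqueTwistedTorusSum3aplus2} gives either the $z_1-2$ form (when $\min(x_1,y_1)=0$) or the $z_1+1$ form (otherwise), and in either subcase the new exponent of $[1,3]$ is divisible by $3$.

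For the torus-link classification I would work in the canonical form $\widehat{\sigma_1^{x}\sigma_2^{y}[1,3]^{3a}}$ with $3a\geq 3$. By the complete-invariant statement already proved, such a link equals $T(3,m)$ exactly when its Jones polynomial matches the corresponding expression in Cor.~\ref{CorJones3brfulltwistsTlinks}. A direct comparison of (\ref{Jones3brfulltwists1x2y3aA}) against (\ref{Vfor3brTorusLink3comp}) and (\ref{Vfor3brTorusKnots}) shows that the only canonical representatives are $(x,y,3a)=(0,0,3a)$ yielding $T(3,3a)$, $(1,1,3a)$ yielding $T(3,3a+1)$, and $(3,1,3a)$ or $(1,3,3a)$ yielding $T(3,3a+2)$. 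I would then trace each residue class $z_1\bmod 3$ back through the closing sentences of Propositions~\ref{propJonesUniqueTwistedTorusSum3aplus1} and \ref{propJonesUniqueTwistedTorusSum3aplus2} to list every $(x_1,y_1,z_1)$ whose canonical form is one of these three representatives. Cases (i) and (ii) collect the $(0,0,*)$ and $(1,1,*)$ preimages for all residues of $z_1$; case (iii) records the direct $z_1\equiv 0\bmod 3$ hits on $(3,1,3a)/(1,3,3a)$; and cases (iv)--(vi) collect the $z_1\not\equiv 0\bmod 3$ preimages of those same representatives.

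The main obstacle will be the $T(3,3a+2)$ enumeration: this torus link has two canonical representatives, each of which can be reached from several distinct $(x_1,y_1,z_1)$ via both branches of Prop.~\ref{propJonesUniqueTwistedTorusSum3aplus2} and via Prop.~\ref{propJonesUniqueTwistedTorusSum3aplus1}. Keeping the preimage tree organised---checking that the six listed families are pairwise disjoint and that no torus-link triple is missed---is the only non-routine bookkeeping, and it is precisely what forces the split between cases (iii), (iv), (v) and (vi).
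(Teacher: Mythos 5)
Your proposal is correct and follows essentially the same route as the paper, which offers no separate argument beyond the remark that ``the prior results imply'' the theorem: you assemble Props.~\ref{propJonesUniqueTwistedTorusSum3a}--\ref{propJonesUniqueTwistedTorusSum3aplus2} and Cor.~\ref{corJonesUniqueTwistedTorusSum3aplus2} by residue of $z_i \bmod 3$, use the closing sentences of those propositions for the canonical $z\equiv 0 \bmod 3$ form, and pull the three canonical torus-link representatives back through each residue class. Your case bookkeeping reproduces exactly the six families listed in the theorem, so nothing is missing.
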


%%%%%%%%%%%%%%%%%%%%%%%%%%%%%%

\subsubsection{The coefficient vector for $L=\widehat{\sigma_{1}^{x} \sigma_{2}^{y} [1,3]^z}$ with $z \equiv 0 \mbox{ mod } 3$}
\label{SSCoeffVectorszis0mod3}
As in the prior section, we assume $x, y, z$ are non-negative for this section.

As Eq.~\ref{Jones3brfulltwists1x2y3a} is symmetric in $x,y$, we may assume $x \geq y$ with no loss in generality.
As Cor.~\ref{CorJones3brfulltwistsTlinks} has already described the case $y=0$, we may assume $x,y \geq 1$.

When $y=1$, the link is $L = T((2,x-1),(3,z+1))$ (see Section~\ref{TwoTierMis2Nis3}) and
$$V_L^* = \epsilon_{x+1}   ( 1+t^{2} )  -t^{z+3} A_{x-2}.$$ 
%$V^* = \epsilon_{x+1}   ( 1+t^{2} )  +t^{z+2} ( \epsilon_{x}  - A_{x-1} )$, which is just

When $y=2$, we have 
$$V_L^* =\epsilon_{x}   ( 1+t^{2} )  +t^{z+2} ( A_{x-1} + t^2 A_{x-1}).$$
For $x=2$ and $y=2$, $V_L^*$ is just $ 1+t^{2}  +t^{z+2} ( 1 + t^2 )$. \newline
For $x=3$ and $y=2$, we have $V_L^* = -( 1 + t^{2}) + t^{z+2}( -1+t-t^2+t^{3} )  $.

For $x \geq 4$ and $y=2$, we have
$$V_L^* = \epsilon_{x} ( 1+t^{2} )  +t^{z+2} \{ \epsilon_{x}(1-t) + 2t^2 A_{x-3} - t^{x-1}+ t^x \}.$$
This expression also applies when $x=2$ or $x=3$, but is less intuitive.

For $x,y \geq 3$, Cor.~\ref{CorJones3brfulltwists}, Eqs.~\ref{AProductUsingL} and \ref{ARecurwz} give us
\begin{eqnarray}
V_L^* &=& \epsilon_{x+y}   ( 1+t^{2} )  + t^{z+2} B_2, \mbox{ with } \label{VstarforRank1wfulltwist} \\
B_2 &=& \epsilon_{x} L(y-2) + \epsilon_{x+1} (y-1) t^{y-1} + \epsilon_{y} y t^y A_{x-y-1} \nonumber \\
&&+ \epsilon_{y+1} (y-1) t^{x-1} +  t^x R(y-2). \label{B2forRank1wfulltwist}
\end{eqnarray}

%A_x A_y &=& \epsilon_x  L( y-2 )  - (y) \epsilon_y t^{y-1} A_{x-y+1} +  t^x R(y-2). \label{AProductUsingL}
%A_{w} &=& t^{z} A_{w-z} + \epsilon_{w-z} A_{z}, \label{ARecurwz} 

% $A_{x-1} = t^{y} A_{x-1-y} + \epsilon_{x-1-y} A_{y}$
Note that when $x=y$, the expression for $B_2$ becomes
$$ B_2 = \epsilon_{x} L(x-2) + \epsilon_{x+1} (x-2) t^{x-1} +  t^x R(x-2).$$
Hence a graph of the absolute values of the coefficients in the second block
for $x=y$ looks like an inverted W.

%%%%%%%%%%%%%%%%%%%%%%%%% Braid Attributes %%%%%%%%%%%%%%%%%%%%%%%%%%%%%%%%%%%%%%%%%
%%%%%%%%%%%%%%%%%%%%%%%%% Braid Attributes %%%%%%%%%%%%%%%%%%%%%%%%%%%%%%%%%%%%%%%%%
%%%%%%%%%%%%%%%%%%%%%%%%% Braid Attributes %%%%%%%%%%%%%%%%%%%%%%%%%%%%%%%%%%%%%%%%%

\section{Braid properties related to twisted n-braid links, \mbox{T-links} and their symmetries}
\label{sectionBraidPropsTlinks}
In this section,
Prop.~\ref{PropBraidRel} describes some relations that are useful in working with twisted n-braid links and \mbox{T-links}.
In particular, Prop.~\ref{PropBraidRel} leads to some symmetry results, 
Cors.~\ref{corktwistednbrFinalParamsSymmetry} and \ref{corktwistednbr2FinalParamsSymmetry}, 
that describe some cases when two such n-braids close to form the same link.

%The observation that $\sigma_a^r {[1,n]} = {[1,n]} \sigma_{a-1}^r$ for $2 \leq a < n$ and any integer $r$
%readily leads to the fact that $\sigma_a^r {[1,n]}^n = {[1,n]}^n \sigma_a^r $.
%Indeed, for $1 \leq a \leq n-1$,
%the left represents $ {[1,n]}^{a-1} \sigma_1^r {[1,n]}^{n-a+1}$, while the right represents 
%$ {[1,n]}^{a+1} \sigma_{n-1}^r {[1,n]}^{n-a-1}$.
%Hence it suffices to show that $  \sigma_1^{\pm 1} {[1,n]}^{2} = {[1,n]}^{2} \sigma_{n-1}^{\pm 1}$.
%This is a straightforward induction proof.
%The same technique shows that $\sigma_a {[n,1]}^n = {[n,1]}^n \sigma_a $.
%It is also known that ${[n,1]}^n = {[1,n]}^n$, \cite{12}, which generates the center of the braid group, \cite{10}.

Note that Eq.~\ref{Tlinkrel1} below also follows from Ex.~2 \cite{12} and duality.

\begin{prop}
Assume $n \geq 2$,  and $1 \leq d \leq n$. We have the following:
\begin{eqnarray}
%[1,n]^2 = \sigma_1 [1,n] [1,n-1]  & and & [n,1]^2 = [n-1,1]  [n,1] \sigma_{1},  \\
%
{[1,n]}^d = [d,1] [1,n] [1,n-1]^{d-1}  & and & [n,1]^d = [n-1,1]^{d-1} [n,1] [1,d]. 
\label{BraidRelReduction}    
%
%{[1,m+1]}^d = [d,1] [1,m+1] [1,m]^{d-1}  & and & [m+1,1]^d = [m,1]^{d-1} [m+1,1] [1,d].
\end{eqnarray}

Assume $1 \leq \rho \leq m$ and $2 \leq m+x$, with $x \geq 0$. 
With $[1,m+x] \in B_{m+x}$ and $\gamma,\, [\rho,1]^x  [1,m]^{\rho} \in B_m$, we have
\begin{eqnarray}
%\widehat{ {[1,m+x]}^{\rho} } = \widehat{  {[\rho,1]}^x {[1,m]}^{\rho} } 
%& and & \widehat{ {[m+x,1]}^{\rho} } = \widehat{   {[m,1]}^{\rho} {[1,\rho]}^x }, 
%\nonumber \\
%
\widehat{ \gamma {[1,m+x]}^{\rho} } = \widehat{  \gamma {[\rho,1]}^x {[1,m]}^{\rho} }
& and & \widehat{ \gamma {[m+x,1]}^{\rho} } = \widehat{  \gamma  {[m,1]}^{\rho} {[1,\rho]}^x}.
\label{brred2}
\end{eqnarray}

Equation~\ref{brred2} may be applied to \mbox{T-links} as:
\begin{eqnarray}
T((m,y),(n,\rho)) &=& \widehat{{[\rho,1]}^{n-m} {[1,m]}^{\rho+y}}, 
\mbox{ for } 2 \leq \rho \leq m \leq n \mbox{ and } 1 \leq y, \label{Tlinkbrred1} \\
T((2,y),(n,\rho)) &=& T((2,y),(\rho, n))
\mbox{ for } 2 \leq \rho, n \mbox{ and } 1 \leq y, \label{Tlinkbrred1mis2} \\
T((m,y)(n,m)) &=& T(m,n+y),  \mbox{ for } 2 \leq m < n \equiv 0 \mbox{ mod } m. \label{Tlinkrel1}
\end{eqnarray}

\label{PropBraidRel}
\end{prop}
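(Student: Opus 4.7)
The plan is to prove the two braid identities in Eq.~\ref{BraidRelReduction} by induction, deduce Eq.~\ref{brred2} from them via Markov destabilization, and specialize to obtain the T-link statements. The workhorse is the commutation $\sigma_{i+1}[1,n] = [1,n]\sigma_i$ for $1 \le i \le n-2$, which follows from the braid relations by commuting $\sigma_{i+1}$ past $\sigma_1,\ldots,\sigma_{i-1}$, applying $\sigma_{i+1}\sigma_i\sigma_{i+1} = \sigma_i\sigma_{i+1}\sigma_i$, and commuting the trailing $\sigma_i$ past $\sigma_{i+2},\ldots,\sigma_{n-1}$. Iterating yields the shifts $[1,n][1,k] = [2,k+1][1,n]$ and $[1,n][d,1] = [d+1,2][1,n]$ for $1 \le k,d \le n-1$. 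Taking $k=n-1$ gives $[1,n][1,n-1] = [2,n][1,n]$, and combined with $\sigma_1[2,n] = [1,n]$ this yields the base case $[1,n]^2 = \sigma_1[1,n][1,n-1]$.

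For the induction step of $[1,n]^d = [d,1][1,n][1,n-1]^{d-1}$, I expand $[1,n]^{d+1} = [1,n]\cdot[d,1][1,n][1,n-1]^{d-1}$, use the reverse shift to rewrite the prefix as $[d+1,2][1,n]^2[1,n-1]^{d-1}$, apply the base case to pull out $\sigma_1$, and observe $[d+1,2]\sigma_1 = [d+1,1]$. The second identity $[n,1]^d = [n-1,1]^{d-1}[n,1][1,d]$ follows by applying the word-reversal anti-automorphism of $B_n$ to the first, under which $[a,b] \mapsto [b,a]$ and factor order reverses.

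To establish Eq.~\ref{brred2}, I induct on $x$; the case $x=0$ is vacuous. For $x \ge 1$, Eq.~\ref{BraidRelReduction} rewrites $\gamma[1,m+x]^\rho = \gamma[\rho,1][1,m+x-1]\sigma_{m+x-1}[1,m+x-1]^{\rho-1}$, in which $\sigma_{m+x-1}$ appears exactly once because $\gamma$, $[\rho,1]$ and both copies of $[1,m+x-1]$ lie in $B_{m+x-1}$. A cyclic permutation inside the closure moves this single $\sigma_{m+x-1}$ to the end, Markov destabilization removes it, and reversing the cyclic permutation leaves $\widehat{\gamma[\rho,1][1,m+x-1]^\rho}$. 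Setting $\gamma'=\gamma[\rho,1]\in B_m$ and applying the inductive hypothesis finishes the step. The companion relation for $[m+x,1]^\rho$ is handled the same way using the second identity of Eq.~\ref{BraidRelReduction}.

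The T-link consequences are then short. Eq.~\ref{Tlinkbrred1} is Eq.~\ref{brred2} with $\gamma=[1,m]^y$, followed by a cyclic conjugacy that merges $[1,m]^y$ with $[1,m]^\rho$. For Eq.~\ref{Tlinkrel1}, the case $\rho=m$ of Eq.~\ref{Tlinkbrred1} gives $\widehat{[m,1]^{n-m}[1,m]^{m+y}}$; when $m \mid n$ the exponent $n-m$ is a multiple of $m$, so $[m,1]^{n-m}$ is a power of the full twist $[m,1]^m = [1,m]^m$ and equals $[1,m]^{n-m}$, collapsing the closure to $\widehat{[1,m]^{n+y}} = T(m,n+y)$. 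Eq.~\ref{Tlinkbrred1mis2} is the most delicate: assuming $\rho \le n$ without loss of generality, Eq.~\ref{brred2} produces $\widehat{\sigma_1^y[\rho,1]^{n-\rho}[1,\rho]^\rho}$ inside $B_\rho$; braid reflection together with $[1,\rho]^\rho = [\rho,1]^\rho$ (the full twist, central in $B_\rho$) rewrites it as $\widehat{\sigma_{\rho-1}^y[1,\rho]^n}$; and conjugation by $[1,\rho]^{\rho-2}$ shifts $\sigma_{\rho-1}^y$ down to $\sigma_1^y$ while leaving $[1,\rho]^n$ fixed, yielding $\widehat{\sigma_1^y[1,\rho]^n} = T((2,y),(\rho,n))$. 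The main obstacle I anticipate is the bookkeeping in this last identity, where braid reflection and conjugation by the Garside element both need to be tracked carefully.
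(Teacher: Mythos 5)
Your proposal is correct, and for the two core identities it follows the paper exactly: Eq.~\ref{BraidRelReduction} by induction on $d$ (via the commutation $\sigma_{i+1}[1,n]=[1,n]\sigma_i$), and Eq.~\ref{brred2} by isolating the single occurrence of the top generator and applying Markov destabilization inductively on $x$; your derivations of (\ref{Tlinkbrred1}) and (\ref{Tlinkrel1}) also match what the paper intends (the paper merely remarks that (\ref{Tlinkrel1}) follows from Ex.~2 of \cite{12} and duality, whereas you prove it directly from $[m,1]^m=[1,m]^m$, which is cleaner and self-contained). The genuine divergence is in (\ref{Tlinkbrred1mis2}): the paper passes to the dual T-link $T((\rho,n-2),(y+\rho,2))$ and then applies (\ref{Tlinkbrred1}) to that form, which is quick but imports the Birman--Kofman duality theorem and forces a separate check of the degenerate boundary case $\rho=2<n$ where the dual form's first tier collapses; you instead stay inside $B_\rho$, applying (\ref{brred2}) to get $\widehat{\sigma_1^y[\rho,1]^{n-\rho}[1,\rho]^\rho}$ and then using braid reflection, centrality of the full twist $[1,\rho]^\rho=[\rho,1]^\rho$, and conjugation by a power of $[1,\rho]$ to shift $\sigma_{\rho-1}^y$ back to $\sigma_1^y$. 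Your route is more elementary and avoids any case split beyond the harmless WLOG $\rho\leq n$, at the cost of the extra bookkeeping you flagged; the paper's route is shorter once duality is taken as given. Both are valid.
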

 
\begin{proof}
Eq.~\ref{BraidRelReduction} is readily proven by induction on $d$.
Eq.~\ref{brred2} follows from (\ref{BraidRelReduction}) and the 
use of Markov destabilization to reduce the number of strands.

To establish (\ref{Tlinkbrred1mis2}), note that by duality we have 
$T( (2,y ), (n,\rho) ) = T( (\rho,n-2 ), (y+\rho,2) )$. When $\rho, n >2$,
Eq.~\ref{Tlinkbrred1} applied to the latter form tells us these are just 
$\widehat{{[2,1]}^{y} {[1,\rho]}^{n}}= T((2,y),(\rho,n))$.
When $\rho=2 < n$, we have $T(2,n+y)$ and the result holds.
As the assertion is symmetric in $\rho, n$, the result holds.
\end{proof}

Eq.~\ref{brred2} leads to a symmetry result that applies to twisted $n$-braids. 
For \mbox{T-links} the result is somewhat analogous to Cor.~3, \cite{12}.
The cited corollary states that 
$T((r_{1},s_{1}),\ldots,(r_{k-1},s_{k-1}),\mathbf{(r_{k},s_{k})}) = T((r_{1},s_{1}),\ldots,(r_{k-1},s_{k-1}),\mathbf{(s_{k},r_{k})})$
when $s_i | r_{i}$ for $i=1,\ldots,k-1$ and $r_{k-1} \leq s_{k}$.
The following corollary gives the same symmetry when the restrictions are removed from the low index parameters
and are transferred to the final pair of parameters.

\begin{cor}
When $\gamma \in B_a$, with $a \leq s \leq r$ and $s | r $, we have
$\widehat{\gamma [1,r]^{s}} = \widehat{\gamma [1,s]^{r}}$.

It follows that when $r_{k-1} \leq s_k \leq r_k$ and $s_{k} | r_{k}$, we have
\begin{eqnarray*}
T((r_{1},s_{1}),\ldots,(r_{k-1},s_{k-1}),(r_{k},s_{k})) = T((r_{1},s_{1}),\ldots,(r_{k-1},s_{k-1}),(s_{k},r_{k})).
\end{eqnarray*}
\label{corktwistednbrFinalParamsSymmetry}
\end{cor}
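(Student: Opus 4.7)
The plan is to prove the braid-closure identity and then read off the T-link statement as an immediate corollary. The two ingredients are a single application of (\ref{brred2}) and the classical identity $(\sigma_1\cdots\sigma_{s-1})^s = (\sigma_{s-1}\cdots\sigma_1)^s = \Delta_s^2$, which says both $[1,s]^s$ and $[s,1]^s$ equal the full twist in $B_s$ and hence are central in $B_s$.

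For the braid identity, apply (\ref{brred2}) with $\rho = m = s$ and $x = r - s$; the hypothesis $a \leq s$ places $\gamma \in B_a \subseteq B_s$, so (\ref{brred2}) yields
\[
\widehat{\gamma [1,r]^s} \;=\; \widehat{\gamma \, [s,1]^{r-s} [1,s]^s}
\]
inside $B_s$. Using centrality together with $[1,s]^s = [s,1]^s$,
\[
[s,1]^{r-s}\,[1,s]^s \;=\; [s,1]^{r-s}\,[s,1]^s \;=\; [s,1]^r,
\]
and the divisibility $s \mid r$, writing $r = sq$, reverses the substitution:
\[
[s,1]^r \;=\; \bigl([s,1]^s\bigr)^q \;=\; \bigl([1,s]^s\bigr)^q \;=\; [1,s]^r.
\]
Combining these gives $\widehat{\gamma[1,r]^s} = \widehat{\gamma[1,s]^r}$.

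For the T-link consequence, take $\gamma = [1,r_1]^{s_1} \cdots [1,r_{k-1}]^{s_{k-1}} \in B_{r_{k-1}}$ and apply the braid identity with $a = r_{k-1}$, $s = s_k$, $r = r_k$; the hypotheses $r_{k-1} \leq s_k \leq r_k$ and $s_k \mid r_k$ are exactly what is required. The rewritten word $\gamma\,[1,s_k]^{r_k}$ is the standard braid representative of $T((r_1,s_1),\ldots,(r_{k-1},s_{k-1}),(s_k,r_k))$, whose $r$-parameters remain weakly increasing because $r_{k-1} \leq s_k \leq r_k$. The only delicate point in the plan is verifying the technical hypotheses of (\ref{brred2}) (the condition $\gamma \in B_s$ is precisely $a \leq s$); the divisibility hypothesis is used only in the final step, since without it $[s,1]^r$ and $[1,s]^r$ would merely be braid reflections of each other --- a conjugacy at the level of the isolated braid, but not one that would survive the arbitrary prefix $\gamma$.
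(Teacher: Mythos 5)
Your proof is correct and follows essentially the route the paper intends: a single application of (\ref{brred2}) with $\rho=m=s$, $x=r-s$, followed by the observation that $[1,s]^s=[s,1]^s$ is the full twist, so that under $s\mid r$ both $[s,1]^{r-s}[1,s]^s$ and $[1,s]^r$ collapse to the same power of the full twist. The paper gives no further detail beyond citing (\ref{brred2}), and your verification of its hypotheses and your remark on where divisibility is genuinely needed are both accurate.
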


A further set of symmetry results implied by Prop.~\ref{PropBraidRel} is the following
\begin{cor}
When $\gamma \in B_a$, with $a \leq m \leq n_1,n_2$ and $n_1+y_1 = n_2+y_2$ and $[n_1]_m = [n_2]_m$, we have
$\widehat{\gamma [1,m]^{y_1} [1,n_1]^{m}} = \widehat{\gamma [1,m]^{y_2} [1,n_2]^{m}}$.
Applied to \mbox{T-links}, $L_i=T((r_{1},s_{1}),\ldots,(r_{k-2},s_{k-2}),(m,y_i),(n_i,m))$ for $i=1,2$ 
and $r_{k-2} \leq m$, we have $L_1 = L_2$.
In particular, for two-tier \mbox{T-links}, we have
\begin{eqnarray}
T((m,y_1),(n_1,m)) &=& T((m,y_2),(n_2,m)).
\label{Eq2tierTlinksMisRhoSymmetry1}
\end{eqnarray}

When $n_1+y_1 = n_2+y_2$ and instead $[n_1]_m = [y_2]_m$, we also have
\begin{eqnarray}
T((m,y_1)(n_1,m)) = T((m,y_2)(n_2,m)).
\label{Eq2tierTlinksMisRhoSymmetry2}
\end{eqnarray}

In particular, when $m <y$ we have $T((m,y)(n,m)) = T((m,n)(y,m))$.
By Cor.~3, \cite{12}, this is also true when $m=y$.
\label{corktwistednbr2FinalParamsSymmetry}
\end{cor}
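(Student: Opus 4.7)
The plan is to destabilize each braid word from $B_{n_i}$ down to $B_m$ via Eq.~\ref{brred2} (with $\rho=m$) and then to exploit one key fact about $B_m$: the full twist $[1,m]^m = [m,1]^m$ is central. (The identity of the two $m$-th powers follows because braid reflection sends $[1,m]$ to $[m,1]$ and is a conjugacy by Section~\ref{ssNotation}, so $[m,1]^m$ is conjugate to the central element $[1,m]^m$ and therefore equal to it.) The braid $\gamma \in B_a \subseteq B_m$ then commutes with any power of this full twist.

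For the first symmetry (hypothesis $[n_1]_m = [n_2]_m$), the data force $n_2 - n_1 = y_1 - y_2$ to be a multiple of $m$, so by induction it suffices to handle the one-step case $(n_2,y_2) = (n_1+m,\,y_1-m)$. Applying Eq.~\ref{brred2} with $\rho=m$ and $x = n_i - m$ to each side gives
\begin{align*}
\widehat{\gamma\,[1,m]^{y_1}\,[1,n_1]^m}     &= \widehat{\gamma\,[1,m]^{y_1}\,[m,1]^{n_1-m}\,[1,m]^m}, \\
\widehat{\gamma\,[1,m]^{y_1-m}\,[1,n_1+m]^m} &= \widehat{\gamma\,[1,m]^{y_1-m}\,[m,1]^{n_1}\,[1,m]^m}.
\end{align*}
Using centrality of $[1,m]^m$ and then the identity $[1,m]^m = [m,1]^m$, each right-hand side collapses to $\widehat{\gamma\,[1,m]^{y_1}\,[m,1]^{n_1}}$, so the two closures coincide. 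The T-link consequence follows at once by taking $\gamma = [1,r_1]^{s_1}\cdots[1,r_{k-2}]^{s_{k-2}} \in B_{r_{k-2}}$, for which the standing hypothesis $a = r_{k-2} \leq m$ is exactly what is assumed.

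For the second symmetry (Eq.~\ref{Eq2tierTlinksMisRhoSymmetry2}, hypothesis $[n_1]_m = [y_2]_m$), first apply Eq.~\ref{Tlinkbrred1} with $\rho=m$ to write $T((m,y_i),(n_i,m)) = \widehat{[m,1]^{n_i-m}\,[1,m]^{m+y_i}}$, then absorb the embedded $[1,m]^m$ into $[m,1]^m$ to obtain the cleaner representative $\widehat{[m,1]^{n_i}\,[1,m]^{y_i}}$. The residue hypothesis together with $n_1+y_1 = n_2+y_2$ gives $n_1-y_2 = n_2-y_1 = qm$ for some integer $q$, which I take $\geq 0$ after swapping indices if necessary. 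Apply braid reflection to $[m,1]^{n_1}[1,m]^{y_1}$, then cyclically conjugate, to reach $\widehat{[m,1]^{y_1}[1,m]^{n_1}}$. Splitting $[1,m]^{n_1} = [1,m]^{qm}[1,m]^{y_2}$, moving the central factor $[1,m]^{qm}$ past $[m,1]^{y_1}$, and rewriting it as $[m,1]^{qm}$ produces $\widehat{[m,1]^{n_2}[1,m]^{y_2}}$, as desired. The ``in particular'' case $T((m,y),(n,m)) = T((m,n),(y,m))$ with $m<y$ corresponds to $(y_1,n_1,y_2,n_2) = (y,n,n,y)$, for which the residue condition $[n]_m = [n]_m$ is automatic.

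The essential ingredient---and the only real obstacle beyond mechanical exponent bookkeeping---is the pair of facts $[1,m]^m = [m,1]^m$ and the centrality of this full twist; everything else is a routine application of Eq.~\ref{brred2}, Eq.~\ref{Tlinkbrred1}, cyclic conjugation, and, in the second symmetry only, braid reflection. The natural distinction between the two parts is that the first can be settled entirely within the ordered form $\widehat{\gamma\,[1,m]^\ast\,[m,1]^\ast}$, whereas the second requires the reflection step to exchange the positions of the two differing exponents.
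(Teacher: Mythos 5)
Your proof is correct, and for the first symmetry it is essentially the paper's argument: both descend to $B_m$ via (\ref{brred2}) and then invoke the centrality of the full twist together with $[1,m]^m=[m,1]^m$. The paper does this in one pass, normalizing each word to the canonical form $\gamma\,[1,m]^{[y_i]_m}[m,1]^{[n_i]_m}[1,m]^{y_i-[y_i]_m+n_i-[n_i]_m}$ and observing the two normal forms coincide, whereas you package the same computation as an induction on single steps $(n,y)\mapsto(n+m,\,y-m)$; the difference is organizational, not mathematical. The two proofs genuinely diverge only on (\ref{Eq2tierTlinksMisRhoSymmetry2}): the paper disposes of it in one line by applying duality to $T((m,y_2),(n_2,m))$ (turning it into $T((m,n_2-m),(m+y_2,m))$) and then citing (\ref{Eq2tierTlinksMisRhoSymmetry1}), while you argue directly on braid words --- pass to the representative $\widehat{[m,1]^{n_i}[1,m]^{y_i}}$, apply braid reflection and cyclic conjugation to swap the roles of the two exponents, and shuttle a central power $[1,m]^{qm}$ across. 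Your route is self-contained (it never appeals to the Birman--Kofman duality theorem) at the cost of a somewhat longer manipulation; the paper's is shorter but imports duality as a black box. Your justification that $[m,1]^m=[1,m]^m$ (reflection gives conjugacy, and an element conjugate to a central element equals it) is valid and consistent with the toolkit the paper itself uses.
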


\begin{proof}
%\widehat{ \gamma {[1,m+x]}^{\rho} } = \widehat{  \gamma {[\rho,1]}^x {[1,m]}^{\rho} }
By (\ref{brred2}), we have $\widehat{\gamma [1,m]^{y_i} [1,n_i]^{m}} = \widehat{\gamma [1,m]^{y_i}  {[m,1]}^{n_i-m} {[1,m]}^{m} }$,
and this is $\widehat{\gamma [1,m]^{[y_i]_m}  {[m,1]}^{[n_i]_m} {[1,m]}^{y_i-[y_i]_m+n_i-[n_i]_m} }$,
which is thus the same link for $i=1,2$.

Eq.~\ref{Eq2tierTlinksMisRhoSymmetry2}: First apply duality to $T((m,y_2)(n_2,m))$ and then use 
Eq.~\ref{Eq2tierTlinksMisRhoSymmetry1}.
\end{proof}

Braid reflection implies
${[m,1]}^{{[n]}_m} {[1,m]}^{{[y]}_m}$ is conjugate to ${[m,1]}^{{[y]}_m} {[1,m]}^{{[n]}_m}$.

Eq.~\ref{Eq2tierTlinksMisRhoSymmetry1} tells us that for a fixed choice of $S$ and $\delta \in {[0,m-1]}$
the family of \mbox{T-links} given by 
$\{T((m,S-\lambda m - \delta),(\lambda m + \delta,m)): \lambda \in {[1, \lfloor (S- \delta-1)/m \rfloor]} \}$
represent a single link.

Furthermore $T((m,S-\lambda m - \delta),(\lambda m + \delta,m)) = T((m,\lambda m + \delta ),(S-\lambda m - \delta,m))$
for $\lambda \leq \lfloor (S- \delta-m)/m \rfloor$ by Eq.~\ref{Eq2tierTlinksMisRhoSymmetry2},
or by the prior observation plus duality.
These two families have disjoint parameter values except when $S-\lambda m - \delta = \lambda' m + \delta$,
i.e. $S - 2\delta=m(\lambda + \lambda')$ for suitable choices of 
$\lambda, \lambda' \in {[1,\,\lfloor (S- \delta-m)/m \rfloor]}$.

\begin{examp}
To illustrate the first observation, suppose $S=37$, $m=7$, $\delta=3$.  The set 
$\{ T((7,27),(10,7)); T((7,20),(17,7)); T((7,13),(24,7)); T((7,6),(31,7)) \}$ represents
a single \mbox{T-link}.

The second observation shows that
$T((7,27),(10,7)) = T((7,10),(27,7))$ and 
$T((7,20),(17,7)) = T((7,17),(20,7))$ and
$T((7,13),(24,7)) = T((7,24),(13,7))$.

These two families are disjoint, since $31 = 7 (\lambda + \lambda')$ has no integral solution.
\end{examp}

Cor.~8, \cite{12}, gives a formula for the braid index of a \mbox{T-link} in terms of 
the form, $L=T((r_1, s_1), \ldots, (r_k, s_k))$ and its dual form 
$T((\overline{r_1}, \overline{s_1}), \ldots, (\overline{r_k}, \overline{s_k}))$. With
$i_0=\min\{i: r_i \geq \overline{r_{k-i}} \} $, and 
$j_0=\min\{j: \overline{r_j} \geq r_{k-j} \}$, Cor.~8, \cite{12} gives 
$b(L)=\min(r_{i_0}, \overline{r_{j_0}})$.
The following is a simple but useful observation:

\begin{prop}
Suppose $L=T((r_1, s_1), \ldots, (r_k, s_k))$.

When $r_{i_0} = \overline{r_{k-i_0}}$, we have $j_0 = k-i_0$ and $b(L)= r_{i_0}$.

When $r_{i_0} > \overline{r_{k-i_0}}$, we have $j_0 = 1+k-i_0$ and $b(L)=\min(r_{i_0}, \overline{r_{1+k-i_0}})$.

Furthermore, $r_i \geq i+1$ and $\overline{r_i} \geq i+1$.
\label{PropBraidIndexTlink}
\end{prop}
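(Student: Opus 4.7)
The plan is to reduce everything to the single monotone function
$f(i)=r_i-\overline{r_{k-i}}$ for $0\le i\le k$ (with the standing
convention $r_0=\overline{r_0}=0$), and to read off both $i_0$ and
$j_0$ from its sign pattern.

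First I would verify that $f$ is strictly increasing: the consecutive
difference is $f(i+1)-f(i)=(r_{i+1}-r_i)+s_{i+1}$, which is at least $1$
since $r_i\le r_{i+1}$ and $s_{i+1}\ge 1$. Together with the boundary
values $f(0)=-\overline{r_k}<0$ and $f(k)=r_k>0$, this yields a unique
sign change in $\{1,\ldots,k\}$.

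Next I would rewrite both indices through $f$. By definition,
$i_0=\min\{i:f(i)\ge 0\}$, so $f$ is negative below $i_0$ and nonnegative
from $i_0$ on. The defining condition for $j_0$ rearranges to
$f(k-j)\le 0$, so $j_0=k-i^{\ast}$ where $i^{\ast}$ is the largest $i$
with $f(i)\le 0$. The case split in the proposition is now automatic:
if $f(i_0)=0$ (i.e., $r_{i_0}=\overline{r_{k-i_0}}$), then $i^{\ast}=i_0$
and $j_0=k-i_0$, so $b(L)=\min(r_{i_0},\overline{r_{k-i_0}})=r_{i_0}$;
if instead $f(i_0)>0$, then $i^{\ast}=i_0-1$ and $j_0=1+k-i_0$, giving
$b(L)=\min(r_{i_0},\overline{r_{1+k-i_0}})$ directly from Cor.~8 of \cite{12}.

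For the final pair of inequalities, I would unfold the two sequences
separately: using the tier convention $r_i<r_{i+1}$ together with
$r_1\ge 2$ gives $r_i\ge r_1+(i-1)\ge i+1$ by induction; and since
$s_k\ge 2$ while each $s_j\ge 1$ for $j<k$, we obtain
$\overline{r_i}=s_k+s_{k-1}+\cdots+s_{k-i+1}\ge 2+(i-1)=i+1$. The only
mild obstacle is to remember that the $s_k\ge 2$ hypothesis is what
upgrades the bound on $\overline{r_i}$ from $i$ to $i+1$; everything
else in the argument is bookkeeping on the monotone $f$.
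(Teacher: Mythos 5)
Your proof is correct and complete. The paper offers no argument for this proposition (it is presented as "a simple but useful observation"), so there is nothing to compare against; your reduction to the strictly increasing function $f(i)=r_i-\overline{r_{k-i}}$, with $f(i+1)-f(i)=(r_{i+1}-r_i)+s_{i+1}\geq 1$ and boundary values $f(0)=-\overline{r_k}<0$, $f(k)=r_k>0$, is exactly the natural bookkeeping that makes the claimed dichotomy for $i_0$ and $j_0$ and the formula $b(L)=\min(r_{i_0},\overline{r_{j_0}})$ from Cor.~8 of \cite{12} immediate. You are also right to flag that the two final inequalities rest on the tier convention $r_i<r_{i+1}$ with $r_1\geq 2$ and on the normalization $s_k\geq 2$, respectively; both conventions are stated in the paper's notation section and are needed here.
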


\begin{cor}
Suppose $L=T((r_1, s_1), \ldots, (r_k, s_k))$. We have $k \leq 2 b(L)-2$.

When $k=2 b(L)-2$ we have $i_0=b(L)-1=j_0$ and $r_{b(L)-1}=b(L)=\overline{r_{b(L)-1}}$;
indeed $r_i = i+1 = \overline{r_{i}}$ for $i \leq b(L)-1$.
\label{cormaxtiers}
\end{cor}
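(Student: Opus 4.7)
My plan is to exploit Prop.~\ref{PropBraidIndexTlink}, which supplies both a case split on the relationship between $r_{i_0}$ and $\overline{r_{k-i_0}}$ and the bounds $r_i \geq i+1$, $\overline{r_i} \geq i+1$. The strategy is to show that equality $k = 2b(L)-2$ is only compatible with the first case of that proposition, and then to read off the resulting rigidity.

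First I would handle the case $r_{i_0} = \overline{r_{k-i_0}}$, where $b(L)$ equals the common value. Applying the two bounds $r_{i_0} \geq i_0+1$ and $\overline{r_{k-i_0}} \geq k - i_0 + 1$ and adding gives $2b(L) \geq k+2$. Next, in the strict case $r_{i_0} > \overline{r_{k-i_0}}$, I would push to a strict inequality $2b(L) \geq k+3$. We have $b(L) = \min(r_{i_0},\overline{r_{1+k-i_0}})$; the strict inequality $r_{i_0} > \overline{r_{k-i_0}} \geq k-i_0+1$ promotes $r_{i_0}$ to at least $k-i_0+2$, while the minimality of $i_0$ (together with $r_{i_0-1}\geq i_0$) promotes $\overline{r_{1+k-i_0}}$ to at least $i_0+1$ when $i_0\geq 2$; the boundary subcase $i_0=1$ is immediate from $\overline{r_k}\geq k+1$. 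Together with the standard bounds on each, this yields $b(L) \geq \max(i_0+1,\,k-i_0+2) \geq (k+3)/2$, so the strict case can never realize equality.

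Finally, to deduce the equality portion, I would note that $k=2b(L)-2$ forces the first case with every inequality tight. Tightness in $i_0+1 \leq b(L)$ and $k-i_0+1 \leq b(L)$ combined with $2b(L)=k+2$ yields $i_0 = k/2 = b(L)-1$ and $b(L) = r_{i_0} = \overline{r_{k-i_0}}$, while Prop.~\ref{PropBraidIndexTlink} then gives $j_0 = k - i_0 = b(L)-1$. For the assertion $r_i = i+1$ and $\overline{r_i} = i+1$ for all $i \leq b(L)-1$, I would invoke strict monotonicity of the tier indices: the chain $2 \leq r_1 < r_2 < \cdots < r_{b(L)-1} = b(L)$ has $b(L)-1$ distinct integers each bounded below by $i+1$, pinning every value; the identical argument applies to the dual sequence.

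The step I expect to be the main obstacle is the strict case of Prop.~\ref{PropBraidIndexTlink}, because one must juggle three separate lower bounds (the default $r_i\geq i+1$, the amplification from $r_{i_0}>\overline{r_{k-i_0}}$, and the amplification from the minimality of $i_0$), and verify the boundary $i_0=1$ so the minimality argument is not vacuous.
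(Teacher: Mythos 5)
Your proof is correct and follows essentially the same route as the paper: both arguments rest on the dichotomy of Prop.~\ref{PropBraidIndexTlink} together with the bounds $r_i \geq i+1$ and $\overline{r_i} \geq i+1$, and your inequalities $2b(L)\geq k+2$ (equal case) and $2b(L)\geq k+3$ (strict case) are just a repackaging of the paper's $k=i_0+j_0\leq 2b(L)-2$ and $k=i_0+j_0-1\leq 2b(L)-3$. One small point in your favor: you explicitly justify the final rigidity claim $r_i=i+1=\overline{r_i}$ via the strict monotonicity of the tier parameters, a step the paper's proof leaves implicit.
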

\begin{proof}
We must have $i_0 \leq b(L)-1$ or $j_0 \leq b(L)-1$ by Cor.~8, \cite{12} and Prop.~\ref{PropBraidIndexTlink}.

First consider the case $j_0 \geq b(L)$, so that $i_0 \leq b(L)-1$.
As $b(L)=r_{i_0} \geq \overline{r_{k-i_0}} \geq k-i_0+1$, we have $k \leq 2b(L)-2$. 
Now $k=2b(L)-2$ implies $i_0 = b(L)-1$ and $b(L)= \overline{r_{k-i_0}}$, so
$j_0=k-i_0$ by Prop.~\ref{PropBraidIndexTlink}. Thus $j_0= b(L)-1$ contrary to assumption.
A similar argument applies when $i_0 \geq b(L)$, so we are left with the cases that $i_0,j_0 \leq b(L)-1$.

When $r_{i_0} > \overline{r_{k-i_0}}$, we have $j_0 = 1+k-i_0$, i.e. $k=i_0+j_0-1 \leq 2 b(L)-3$.
When $r_{i_0} = \overline{r_{k-i_0}}$, we have $j_0 = k-i_0$, so that $k \leq 2 b(L)-2$, with equality exactly when $i_0=j_0=b(L)-1$.
\end{proof}

Cor.~\ref{cormaxtiers} shows that \mbox{T-links} with braid index two must have two or fewer tiers;
those with braid index three must have four or fewer tiers.
For a fixed choice of braid index, $b$, the \mbox{T-link} with the maximum number of tiers, $2 b-2$, has the 
form $T((2,s_1), (3,s_2),\ldots, (b,s_{b-1}), (r_{b},1),\ldots, (r_{2 b-3},1),(r_{2 b-2},2))$.

%%%%%%%%%%%%%%%%%%%%%%%%%%%%%%%%%%%%%%%%%%%
\section{The Jones polynomial and other properties of T-links}
\label{SectionTlinks}
\subsection{The Jones polynomial for two tier \mbox{T-links} with $b(L) \leq 3$}
By Cor.~8 \cite{12}, in order for the braid index to be two, a two tier \mbox{T-link} must have the form
$T( (2,y), (n,2))$.
Eq.~\ref{Tlinkbrred1mis2} tells us that these are just $T(2,n+y)$. 

In order for the braid index to be three, a two tier \mbox{T-link} must have one of the
following forms, $T( (m,y ), (n,\rho) )$:
\begin{enumerate}
\item $T( (2,y ), (3,\rho) )$, with $3 \leq \rho$, (see Section~\ref{TwoTierMis2Nis3}),
\item $T( (2,y ), (n,3) )$, (see Section~\ref{TwoTierMis2Rhois3}),
\item $T( (3,y), (n,3))$, (see Section~\ref{TwoTierMis3Rhois3}),
\item $T( (3,y), (n,2))$, (see Section~\ref{TwoTierMis3Rhois2}),
\item $T( (m,1), (n,2))$, with $3 < m$, (see Section~\ref{TwoTierYis1Rhois2}),
\end{enumerate}

\subsubsection{The Jones polynomial for two tier \mbox{T-links}  with $m=2$, $\rho=3$}
\label{TwoTierMis2Rhois3}
Eq.~\ref{Tlinkbrred1mis2} applied to $T( (2,y ), (n,3) )$ tells us these are just $T((2,y),(3,n))$.
This is analyzed in Section~\ref{TwoTierMis2Nis3}.

\subsubsection{The Jones polynomial for two tier \mbox{T-links}  with $m=3$, $\rho=2$}
\label{TwoTierMis3Rhois2}
By duality we have $T( (3,y ), (n,2) ) = T( (2,n-3 ), (y+2,3) )$.
Eq.~\ref{Tlinkbrred1mis2} applied to the second form tells us these are just $T( (2,n-3 ), (3,y+2) )$.
This is analyzed in Section~\ref{TwoTierMis2Nis3}.

\begin{examp}
\label{ExhyperbolicTlinkmis3rhois2}
$\mathbf{k}7_{39} = T((3,3),(11,2))$ is a hyperbolic Lorenz knot, \cite{12}, \cite{23}.
% verified Jones polynomial against \cite{23}; 
%arXiv version has wrong representation - gives $T(11,2,3,1)$  which corresponds to $T((3,1),(11,2))$ in this notation
\end{examp}

\subsubsection{The Jones polynomial for two tier \mbox{T-links}  with $m=3$, $\rho=3$}
\label{TwoTierMis3Rhois3}
By duality we have $T( (3,y ), (n,3) ) = T( (3,n-3 ), (y+3,3) )$.
Eq.~\ref{Tlinkrel1} tells us these are just $T(3,n+y)$ when $3|n$ or $3|y$.

Restating (\ref{Vfor3brTorusLink3comp}) and (\ref{Vfor3brTorusKnots}) 
for the case that $3|n$ or $3|y$, we have $T( (3,y ), (n,3) ) = T(3,n+y)$ and 
\begin{eqnarray*}
V_{T( (3,y ), (n,3) )}(t) &=& t^{n+y-1} \{1+ t^2+ 2t^{n+y+1} \} , \mbox{ when } 3|n \mbox{ and } 3|y, \\
&=& t^{n+y-1} \{1+ t^2 - t^{n+y+1}   \}   , \mbox{ when } 3 \not | (n+y).
\end{eqnarray*}

%%%%%%%%%%%%%%%%%%  General Case %%%%%%%%%%%%%%%%%%%%%%%%%%%%%%%%%%%%%%%%%%%%%%%%%%%

To calculate the Jones polynomial when $n,y \not \equiv 0 \mbox{ mod }3$, note that (\ref{Tlinkbrred1}) tells us that 
$T((3,y),(n,3)) = \widehat{{[3,1]}^{n-3} {[1,3]}^{3+y}}$.
The braid properties discussed in Section~\ref{sectionBraidPropsTlinks} allow us to write
$T((3,y),(n,3)) = 
%\widehat{{[3,1]}^{{[n]}_3} {[1,3]}^{y+3a}}= 
\widehat{   {[3,1]}^{{[n]}_3} {[1,3]}^{y+n-{[n]}_3}}$.
%where $a= \lfloor n/3 \rfloor$ .

When ${[n]}_3 = 1={[y]}_3$, we have $T((3,y),(n,3)) = \widehat{ [3,1][1,3] [1,3]^{y-[y]_3+n-[n]_3}  } $, and this is 
$\widehat{ \sigma_1^2 \sigma_2^2 [1,3]^{y-[y]_3+n-[n]_3}  } $, so we may apply (\ref{Jones3brfulltwists1x2y3a}) with $w=2y+2n$ to get
\begin{eqnarray*}
V_{T((3,y),(n,3))} &=& t^{y+n-1} \{ ( 1 + t^{2})  + t^{y+n} ( 1+t^{2}) \}.
\end{eqnarray*}
This class contains no torus links.

%%%%%%%%%%%%%%%%%%%%%%%%%%%%%%%%%%%%%%%%%%

When ${[n]}_3 = 1$ and ${[y]}_3=2$, 
we have $T((3,y),(n,3)) = \widehat{ \sigma_1^5 \sigma_2^1  [1,3]^{y-[y]_3+n-[n]_3}  } $, 
so we may apply (\ref{Jones3brfulltwists1x2y3a}) with $w=2y+2n$ to get
\begin{eqnarray*}
V_{T((3,y),(n,3))} &=& t^{y+n-1} \{  ( 1 + t^{2}) - t^{y+n} ( 1 -t+ t^2)  \}.
\end{eqnarray*}
The same result applies when ${[n]}_3 = 2$ and ${[y]}_3=1$ by Eq.~\ref{Eq2tierTlinksMisRhoSymmetry2}.
This class contains no torus links.

%%%%%%%%%%%%%%%%%%%%%%%%%%%%%%%%%%%%%%%%%%%%%%%%%%%%%%%%%%%%%%%%%%%%%%%%%%%%%%%%%%%%%%%

When ${[n]}_3 = 2 ={[y]}_3$, 
we have $T((3,y),(n,3)) = \widehat{ \sigma_1^2   [1,3]^{y-[y]_3+n-[n]_3 +3}  } $, 
so we may apply (\ref{Jones3brfulltwists1x2y3a}) with $w=2y+2n$ to get
\begin{eqnarray*}
V_{T((3,y),(n,3))} &=& t^{y+n-1} \{  ( 1 + t^{2})  + t^{y+n}  (1+ t^{2}) \}.
\end{eqnarray*}
This class contains no torus links.

Hence, in all cases $T( (3,y ), (n,3) )$ has a representation as 
$\widehat{ \sigma_1^a \sigma_2^b [1,3]^z}$ with $z \equiv 0 \mbox{ mod } 3$.
The detailed description is given in the following proposition.

\begin{prop}
The family of two tier \mbox{T-links} $T( (3,y ), (n,3) )$ are partitioned into six families 
according to their representation by $\sigma_1^a \sigma_2^b [1,3]^z$ with $z \equiv 0 \mbox{ mod } 3$ 
and $z \geq 3$ and $a \geq b \geq 0$:
\begin{enumerate}
\item $a=b=0$ are the torus links $T(3,z) = T( (3, x ), (z-x,3) )$ 
whenever $x \equiv 0 \mbox{ mod } 3$ and $3 \leq x \leq z-6$,
\item $a=b=1$ are the torus links $T(3,z+1)$ which is the same as
   \begin{enumerate}
   \item $T( (3,1+x ), (z-x,3) )$ whenever $x \equiv 0 \mbox{ mod } 3$ and $0 \leq x \leq z-6$,
   \item $T( (3, x), (1+z-x,3) )$ whenever $x \equiv 0 \mbox{ mod } 3$ and $3 \leq x \leq z-3$,
   \end{enumerate}
\item $a=3$ and $b=1$ are the torus links $T(3,z+2)$ which is the same as
   \begin{enumerate}
   \item $T( (3,2+x ), (z-x,3) )$ whenever $x \equiv 0 \mbox{ mod } 3$ and $0 \leq x \leq z-6$,
   \item $T( (3,x ), (2+z-x,3) )$ whenever $x \equiv 0 \mbox{ mod } 3$ and $3 \leq x \leq z-3$,
   \end{enumerate}
\item $a=b=2$ are $T((3,z-2),(4,3) = T((3,1+x),(1+z-x,3))$ 
whenever $x \equiv 0 \mbox{ mod } 3$ and $0 \leq x \leq z-3$,
which includes no torus links,
\item $a=5$ and $b=1$ are $T((3,z-1),(4,3)$, which includes no torus links and is the same as
   \begin{enumerate}
   \item $ T((3,2+x),(1+z-x,3))$
   whenever $x \equiv 0 \mbox{ mod } 3$ and $0 \leq x \leq z-3$,
   \item $ T((3,1+x),(2+z-x,3))$
   whenever $x \equiv 0 \mbox{ mod } 3$ and $0 \leq x \leq z-3$,
   \end{enumerate}
\label{prop2tierTlinkscase51}
\item $a=2$ and $b=0$ are $T((3,z-4),(5,3) = T((3,x-1),(2+z-x,3))$ 
whenever $x \equiv 0 \mbox{ mod } 3$ and $3 \leq x \leq z-3$,
which includes no torus links.
\end{enumerate}
\label{prop2tierTlinks}
\end{prop}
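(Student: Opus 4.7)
The plan is to organize the computations already carried out in the paragraphs preceding the proposition into a clean case analysis based on the residues $[n]_3$ and $[y]_3$, and then appeal to Theorem~\ref{ThmJonesCompleteInv} both for uniqueness of the canonical form $\sigma_1^{a}\sigma_2^{b}[1,3]^{z}$ and for the identification of torus links within the family. First I would handle the three residue pairs with $3\mid n$ or $3\mid y$. Equation~\ref{Tlinkrel1} (applied directly when $3\mid n$, or after duality when $3\mid y$ and $3\nmid n$) collapses $T((3,y),(n,3))$ to the torus link $T(3,n+y)$. I would then factor $[1,3]^{n+y}$ as $[1,3]^{r}\cdot[1,3]^{n+y-r}$ with $r=(n+y)\bmod 3\in\{0,1,2\}$, bringing the remainder into the canonical form $\sigma_1^{a}\sigma_2^{b}$ with $a\ge b\ge 0$ via the braid relation $\sigma_1\sigma_2\sigma_1=\sigma_2\sigma_1\sigma_2$, cyclic conjugation in the closure, and braid reflection. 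This yields cases (1), (2), (3) of the proposition with canonical triples $(0,0,z)$, $(1,1,z)$, $(3,1,z)$, where $z=n+y-r$ in each instance. Equation~\ref{Eq2tierTlinksMisRhoSymmetry2} then merges the two symmetric residue pairs $\{3\mid n,\ [y]_3=r\}$ and $\{[n]_3=r,\ 3\mid y\}$ for $r=1,2$, producing the doubled parameter listings in cases (2) and (3).

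Next I would treat the three remaining residue pairs by exactly the braid simplifications sketched just before the proposition. The base words $[3,1][1,3]$, $[3,1][1,3]^{2}$, and $[3,1]^{2}$ --- appearing respectively when $[n]_3=[y]_3=1$, when $\{[n]_3,[y]_3\}=\{1,2\}$, and when $[n]_3=[y]_3=2$ --- reduce under braid relations, cyclic conjugation in the closure, and centrality of $[1,3]^{3}$ in $B_{3}$ to $\sigma_1^{2}\sigma_2^{2}$, $\sigma_1^{5}\sigma_2$, and $\sigma_1^{2}[1,3]^{3}$, giving cases (4), (5), (6) with canonical triples $(2,2,n+y-2)$, $(5,1,n+y-3)$, $(2,0,n+y-1)$. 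Once more Eq.~\ref{Eq2tierTlinksMisRhoSymmetry2} merges the two symmetric residue pairs for case (5). In the last case one additionally uses $[1,3]\,\sigma_1^{2}\cdot[1,3]^{n+y-2}\cong\sigma_1^{2}[1,3]^{n+y-1}$ under cyclic conjugation, which is legal because $n+y-1\equiv 0\pmod 3$ guarantees centrality of the trailing factor.

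The parameter ranges then follow mechanically: writing $x$ for the appropriate integer translate of $y$ indicated in the statement and using $y\ge 1$, $n\ge 4$ --- with the stronger $n\ge 6$ whenever $3\mid n$ --- pins down the bounds on $x$, while the congruence $x\equiv 0\pmod 3$ is forced by the residue class under consideration. Uniqueness of the canonical representation across the six cases, and therefore the identification of cases (1)-(3) as precisely the torus links in the family, is immediate from Theorem~\ref{ThmJonesCompleteInv}. The main potential obstacle is not any single braid manipulation but the bookkeeping: one must confirm that every pair $(y,n)$ in the two-tier range is caught in exactly one case, that the listed parameter ranges are tight at both endpoints, and that the duplicate listings in cases (2), (3), (5) really are the same link rather than accidentally overlapping --- all of which is ensured by Eq.~\ref{Eq2tierTlinksMisRhoSymmetry2} on the one hand and Theorem~\ref{ThmJonesCompleteInv} on the other.
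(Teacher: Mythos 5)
Your proposal is correct and follows essentially the same route as the paper, which offers no separate proof environment for this proposition but derives it from exactly the computations you cite: the reduction $T((3,y),(n,3))=\widehat{[3,1]^{[n]_3}[1,3]^{y+n-[n]_3}}$, the residue-by-residue simplification to the canonical words $\sigma_1^a\sigma_2^b[1,3]^z$, Eq.~\ref{Tlinkrel1} with duality for the torus cases, Eq.~\ref{Eq2tierTlinksMisRhoSymmetry2} for merging the symmetric residue pairs, and Props.~\ref{propJonesUniqueTwistedTorusSum3a}/Thm.~\ref{ThmJonesCompleteInv} for uniqueness of the canonical triple and the identification of the torus links. The only quibble is cosmetic: your third ``base word'' $[3,1]^2$ sits in front of $[1,3]^{n+y-2}$ rather than a central power, but your subsequent cyclic-conjugation step handles this correctly.
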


\subsubsection{The Jones polynomial for two tier \mbox{T-links}  with $y=1$, $\rho=2$}
\label{TwoTierYis1Rhois2}
By duality we have $T( (m,1), (n,2)) =  T( (2,n-m), (3,m) )$.
The second form is analyzed in Section~\ref{TwoTierMis2Nis3}.

\subsubsection{The Jones polynomial for two tier \mbox{T-links}  with $m=2$, $n=3$}
\label{TwoTierMis2Nis3}
By Cor.~8 \cite{12}, when $\rho=2$, the braid index is 2; otherwise the braid index is 3, as $\rho \geq 2 = m$.
By duality we have $T( (2,y ), (3,\rho) ) = T( (\rho,1 ), (y+\rho,2) )$.
The Jones polynomial for this family is described by Cor.~\ref{CorJones3brfulltwistsTlinks}.

By Props.~\ref{propJonesUniqueTwistedTorusSum3aplus1} and \ref{propJonesUniqueTwistedTorusSum3aplus2},
there is a representation $\sigma_1^{1+y} \sigma_2 [1,3]^{\rho-1}$ when $\rho \equiv 1 \mbox{ mod } 3$,
and a representation $\sigma_1^{3+y} \sigma_2 [1,3]^{\rho-2}$ when $\rho \equiv 2 \mbox{ mod } 3$.
Thus the only equal links are $T( (2,2+y ), (3,\rho-1) )= T( (2,y ), (3,\rho) )$
whenever $\rho \equiv 2 \mbox{ mod } 3$.

\begin{examp} 
\label{ExhyperbolicTlinksmis2nis3}
The following are hyperbolic Lorenz knots, \cite{12}:
\begin{enumerate}
\item $\mathbf{k}3_{1} = T((2,4),(3,4)) \leftrightarrow \sigma_1^{5} \sigma_2 [1,3]^{3}$,
% T((2,2),(3,4))= 10-124 is NOT hyperbolic - typo in Birman/Kofman paper
% T((2,4),(3,4))= 12n-0242 is hyperbolic and has desired Jones polynomial
% \mathbf{k}3_{1} DOESN'T match Jones polynomial on arXiv for \cite{23}
\item $\mathbf{k}4_{3} = T((2,2),(3,8)) \leftrightarrow \sigma_1^{5} \sigma_2 [1,3]^{6}$,
% \mathbf{k}4_{3} matches Jones polynomial on arXiv for mirror image of \cite{23}
\item $\mathbf{k}5_{5} = T((2,2),(3,11)) \leftrightarrow \sigma_1^{5} \sigma_2 [1,3]^{9}$, 
\item $\mathbf{k}5_{11} = T((2,6),(3,4)) \leftrightarrow \sigma_1^{7} \sigma_2 [1,3]^{3}$,
% \mathbf{k}5_{5} matches Jones polynomial on arXiv
% \mathbf{k}5_{11} matches Jones polynomial on arXiv
\item $\mathbf{k}6_{5} = T((2,2),(3,14)) \leftrightarrow \sigma_1^{5} \sigma_2 [1,3]^{12}$, 
\item $\mathbf{k}6_{19} = T((2,6),(3,5)) \leftrightarrow \sigma_1^{9} \sigma_2 [1,3]^{3}$,
% \mathbf{k}6_{5} matches Jones polynomial on arXiv
% \mathbf{k}6_{19} matches Jones polynomial on arXiv
\item $\mathbf{k}7_{5} = T((2,4),(3,16)) \leftrightarrow \sigma_1^{5} \sigma_2 [1,3]^{15}$, 
\item $\mathbf{k}7_{68} = T((2,6),(3,10)) \leftrightarrow \sigma_1^{7} \sigma_2 [1,3]^{9}$, 
\item $\mathbf{k}7_{90} = T((2,6),(3,8)) \leftrightarrow \sigma_1^{9} \sigma_2 [1,3]^{6}$.
% \mathbf{k}7_{5} matches Jones polynomial on arXiv for mirror image of \cite{23}
% \mathbf{k}7_{68} matches Jones polynomial on arXiv for mirror image of \cite{23}
% \mathbf{k}7_{90} matches Jones polynomial on arXiv for mirror image of \cite{23}
\end{enumerate}

Note that $\mathbf{k}7_{39} = T((3,3),(11,2)) = T((2,8),(5,3)) = T((2,8),(3,5))$ and 
$T((2,8),(3,5)) \leftrightarrow \sigma_1^{11} \sigma_2 [1,3]^{3}$.
Also $\sigma_1^{5} \sigma_2 [1,3]^{3a} = T((3,3a-1),(4,3))$ by Prop.~\ref{prop2tierTlinks} case~\ref{prop2tierTlinkscase51}.
\end{examp}

%%%%%%%%%%%%%%%%%%%%%%%%%%%%%%%%%  3-tier T-links with braid index 3 %%%%%%%%%%%%%%%%%
%%%%%%%%%%%%%%%%%%%%%%%%%%%%%%%%%  3-tier T-links with braid index 3 %%%%%%%%%%%%%%%%%

\subsection{The Jones polynomial for 3-tier T-links with braid index 3}
\label{SS3tier}

The 3-tier T-links have the form $T((r_1, s_1), (r_2, s_2), (r_3, s_3))$, 
with dual form $T((s_3, r_3-r_2), (s_3+s_2, r_2-r_1), (s_3+s_2+s_1, r_1))$, \cite{12}.

First consider the case when $r_1 = s_3$. Using Cor.~8, \cite{12}, we have
$i_0=\min\{i: r_i \geq \overline{r_{3-i}} \} = 2$, and 
$j_0=\min\{j: \overline{r_j} \geq r_{3-j} \} = 2$.
To obtain a braid index of $3$ we need $3=\min\{r_2, \overline{r_{2}} \}$.
This implies $r_1=2$, with either $r_2=3$ or $s_2=1$.
In the former case we have $T((2, s_1), (3, s_2), (r_3, 2))$ with dual form
$T((2, r_3-3), (2+s_2, 1), (2+s_2+s_1, 2))$.
In the latter case we have $T((2, s_1), (r_2, 1), (r_3, 2))$, 
with dual form $T((2, r_3-r_2), (3, r_2-2), (3+s_1, 2))$.

As these four forms are readily seen to be interchangeable, we study the form 
$T((2, s_1), (3, s_2), (r_3, 2)) = \widehat{[1, 2]^{s_1} [1, 3]^{s_2} [2,1]^{r_3-3} [1, 3]^2}$,
by Prop.~\ref{PropBraidRel}. This is just
\begin{enumerate}
\item $ % \widehat{\sigma_1^{s_1+r_3-3} [1, 3]^{s_2+2} } =
\widehat{\sigma_1^{s_1+r_3} \sigma_2 [1, 3]^{s_2} }$, i.e. $T((2,s_1+r_3-1),(3,s_2+1))$
when $s_2 \equiv 0 \mbox{ mod } 3$,
\item $ %\widehat{\sigma_1^{s_1} (\sigma_1 \sigma_2) \sigma_1^{r_3-3} [1, 3]^{s_2+1}}= 
\widehat{\sigma_1^{s_1} \sigma_2^{r_3-3} [1, 3]^{s_2+2}}$,
when $s_2 \equiv 1 \mbox{ mod } 3$,
\label{Vfor3tiersymmetrics2is1mod3}
   \begin{enumerate}
   \item this is the \mbox{T-link} $T((2,s_1-1),(3,s_2+3))$ when $r_3=4$,
   \item this is the \mbox{T-link} $T((2,r_3-4),(3,s_2+3))$ when $s_1=1$,
   \end{enumerate}
\item $ %\widehat{\sigma_1^{s_1} (\sigma_1 \sigma_2)^2 \sigma_1^{r_3-3} [1, 3]^{s_2}}= 
\widehat{\sigma_1^{s_1+1} \sigma_2^{r_3-2} [1, 3]^{s_2+1}}$
when $s_2 \equiv 2 \mbox{ mod } 3$.
\label{Vfor3tiersymmetrics2is2mod3}
\end{enumerate}
Hence $T((2, s_1), (3, s_2), (r_3, 2))$ always has a representation as $\widehat{ \sigma_1^{x} \sigma_2^y [1,3]^{z} }$
with $z \equiv 0 \mbox{ mod } 3$. 
The Jones polynomial for case~\ref{Vfor3tiersymmetrics2is1mod3} when $s_1>1$ and $r_3>4$ 
is described by (\ref{Jones3brfulltwists1x2y3aA}) and its
coefficient vector is described in Section~\ref{SSCoeffVectorszis0mod3}; 
this also applies to case~\ref{Vfor3tiersymmetrics2is2mod3}.
The following result summarizes all the non-trivial forms that represent the same link.

\begin{prop}
The following is an exhaustive list of the 3-tier \mbox{T-links} with $r_1 = s_3$ and
braid index 3 which have no 2-tier representation. 
These have a braid representation $\sigma_1^{x} \sigma_2^y [1, 3]^{z} $, or
$\sigma_1^{y} \sigma_2^x [1, 3]^{z} $, 
where $x,y \geq 2$ and $(x,y) \neq (2,2)$ and $z \equiv 0 \mbox{ mod } 3$ with $z \geq 3$.
\begin{enumerate}
\item $T((2, x), (3, z-2), (3+y, 2))$ or $T((2, y), (3, z-2), (3+x, 2))$ from \ref{Vfor3tiersymmetrics2is1mod3},
\item dual to prior item: $T((2, y), (z, 1), (x+z, 2))$ or $T((2, x), (z, 1), (y+z, 2))$,
\item $T((2, x-1), (3, z-1), (2+y, 2))$ or $T((2, y-1), (3, z-1), (2+x, 2))$ from \ref{Vfor3tiersymmetrics2is2mod3}
\item dual: $T((2, y-1), (z+1,1), (x+z, 2))$ or $T((2, x-1), (z+1,1), (y+z, 2))$.
\end{enumerate}
\label{prop3tierSymdups}
\end{prop}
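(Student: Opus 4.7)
The plan is to repackage the case analysis in the paragraphs preceding the proposition and then apply duality and braid reflection to produce the eight listed T-link representations. The preceding discussion already reduces any 3-tier T-link $L$ with $r_1=s_3$ and braid index three to the canonical form $T((2,s_1),(3,s_2),(r_3,2))$ with $s_1,s_2\ge 1$ and $r_3\ge 4$ (the latter needed for genuine three tiers), and expresses $L$ as $\widehat{\sigma_1^{x}\sigma_2^{y}[1,3]^{z}}$ with $z\equiv 0\pmod 3$ and $z\ge 3$ in three subcases indexed by $s_2\bmod 3$. The $s_2\equiv 0\pmod 3$ subcase already yields a 2-tier link and is excluded by hypothesis, so the remaining task is to read off the parameter dictionary in the other two subcases, handle the degenerate boundary values, and then dualize.

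Next I would carry out the parameter identifications. In the $s_2\equiv 1\pmod 3$ subcase, the braid word $\widehat{\sigma_1^{s_1}\sigma_2^{r_3-3}[1,3]^{s_2+2}}$ is already flagged as 2-tier when $s_1=1$ or $r_3=4$; removing these degenerate choices yields $x:=s_1\ge 2$, $y:=r_3-3\ge 2$, $z:=s_2+2$, producing item~(i). In the $s_2\equiv 2\pmod 3$ subcase, setting $x:=s_1+1$, $y:=r_3-2$, $z:=s_2+1$ makes $x\ge 2$ automatic and forces $y\ge 2$ from $r_3\ge 4$, producing item~(iii). In each subcase, braid reflection swaps $\sigma_1$ and $\sigma_2$, hence $x$ and $y$, supplying the second T-link listed in the same item. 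Items~(ii) and~(iv) then follow from direct substitution into the dual formulas $\overline{r_i}=\sum_{k+1-i}^{k}s_j$ and $\overline{s_i}=r_{k+1-i}-r_{k-i}$ recalled in Section~\ref{ssNotation}.

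The subtle point is the exclusion $(x,y)=(2,2)$. Substituting these values in the $s_2\equiv 1\pmod 3$ subcase gives $T((2,2),(3,1),(5,2))$, and in the $s_2\equiv 2\pmod 3$ subcase gives $T((2,1),(3,2),(4,2))$; both yield the common braid word $\widehat{\sigma_1^{2}\sigma_2^{2}[1,3]^{3}}$, which by the analysis in Section~\ref{TwoTierMis3Rhois3} (with $[n]_3=[y]_3=1$ and $y+n=5$) equals the 2-tier link $T((3,1),(4,3))$ and must therefore be removed. The main obstacle I anticipate is precisely this boundary bookkeeping: one must verify that $s_1=1$, $r_3=4$, and $(x,y)=(2,2)$ are the only parameter combinations that collapse to a 2-tier link, so that no spurious 3-tier family is dropped and the listing is genuinely exhaustive.
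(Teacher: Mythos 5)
Your proposal is correct and follows essentially the same route as the paper, which presents this proposition as a summary of the immediately preceding case analysis on $s_2 \bmod 3$ combined with the duality formulas $\overline{r_i}=\sum_{k+1-i}^{k}s_j$, $\overline{s_i}=r_{k+1-i}-r_{k-i}$. Your parameter dictionaries ($x=s_1$, $y=r_3-3$, $z=s_2+2$ in the first subcase; $x=s_1+1$, $y=r_3-2$, $z=s_2+1$ in the second) and your identification of the degenerate values $s_1=1$, $r_3=4$, $(x,y)=(2,2)$ agree with the paper; the one step to make fully explicit is that excluding \emph{all} other 2-tier representations rests on the classification of 2-tier braid-index-three T-links in the earlier sections together with the uniqueness of the $\sigma_1^{x}\sigma_2^{y}[1,3]^{z}$ form, which you correctly invoke for the $(2,2)$ case.
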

Note that if we map $T((2, r_3-r_2), (3, r_2-2), (3+s_1, 2))$ to $\sigma_1^{x} \sigma_2^y [1, 3]^{z} $,
we obtain $T((2, y), (z, 1), (x+z, 2))$, which appears in the list above, as does the dual form.
The form for mapping $T((2, r_3-r_2), (3, r_2-2), (3+s_1, 2))$ to $\sigma_1^{y} \sigma_2^x [1, 3]^{z} $,
and its dual, also appear in the list above.

\begin{examp}
$\mathbf{k}7_{99} =T((2,2),(3,2),(5,2))$ is a hyperbolic Lorenz knot, \cite{12}.
By case~\ref{Vfor3tiersymmetrics2is2mod3} preceding Prop.~\ref{prop3tierSymdups}
it is generated by $\sigma_1^{3} \sigma_2^3 [1, 3]^{3} $.
\end{examp}

\medskip

Now consider the case when $r_1 > s_3$, hence $r_1 \geq 3$ and $i_0 = 1$ or $2$.
Prop.~\ref{PropBraidIndexTlink} says that in case $i_0=1$ and $r_1> \overline{r_2}$
we have $j_0=3$. In order for the braid index to be three, we must have $3=r_1$, as 
$\overline{r_3} \geq 4$. Thus $s_3=2$ and $\overline{r_2} =s_3+s_2 \geq 3$ which
contradicts the assumption $r_1> \overline{r_2}$.
If $i_0=2$, we must have $j_0=2$ as $j_0=1$ implies $\overline{r_1}=s_3 \geq r_2$.
In order for the braid index to be three, we must have $\overline{r_2}=3$ and $\overline{r_1}=s_3 =2$.
This contradicts the assumption $i_0=2$, as $3 \leq r_1 = \overline{r_2}$.

The prior paragraph shows that the only viable case is $i_0=1$ and $r_1= \overline{r_2}$,
so that $j_0=2$. In order for the braid index to be three, we must have $s_3=2$, $s_2=1$ and $r_1 = 3$.
This gives rise to the \mbox{T-link} $T((3, s_1), (r_2, 1), (r_3, 2))$, 
with dual form $T((2, r_3-r_2), (3, r_2-3), (3+s_1, 3))$.
Prop.~\ref{PropBraidRel} shows the latter may be represented by the braid word 
$[1,2]^{r_3-r_2} [1,3]^{r_2-3} [3,1]^{s_1} [1,3]^{3} $.
We have the following representations:
\begin{enumerate}
\item $[1,2]^{r_3-r_2} [1,3]^{s_1+r_2}$, i.e. $T((2,r_3-r_2),(3,s_1+r_2))$ when $s_1 \equiv 0 \mbox{ mod } 3$,
\item $[1,2]^{r_3-r_2+2}[1,3]^{s_1+r_2-1}$, i.e. $T((2,r_3-r_2+2),(3,s_1+r_2-1))$ when $s_1 \equiv 2 \mbox{ mod } 3$,
\item when $s_1 \equiv 1 \mbox{ mod } 3$,
   \begin{enumerate}
   \item $\sigma_1^{r_3-r_2+1} \sigma_2 [1,3]^{s_1+r_2-1}$, i.e. $T((2,r_3-r_2),(3,s_1+r_2))$ when $r_2 \equiv 0 \mbox{ mod } 3$,
   \item $\sigma_1^{r_3-r_2+2} \sigma_2^2 [1,3]^{s_1+r_2-2}$ when $r_2 \equiv 1 \mbox{ mod } 3$,
   \label{Vfor3tierAsymmetrics1is1mod3r2is1mod3}
   \item $\sigma_1^{r_3-r_2+5} \sigma_2 [1,3]^{s_1+r_2-3}$, i.e. $T((2,r_3-r_2+4),(3,s_1+r_2-2))$ when $r_2 \equiv 2 \mbox{ mod } 3$.
   \end{enumerate}
\end{enumerate}

To see the result when $s_1 \equiv 2 \mbox{ mod } 3$, first note that $[1,2]^{r_3-r_2} [1,3]^{s_1+r_2-2} [3,1]^{2}$ is a 
braid representative.
For the case $s_1 \equiv 1 \mbox{ mod } 3$ and $r_2 \equiv 2 \mbox{ mod } 3$, note that 
$[1,2]^{r_3-r_2} (\sigma_1\sigma_2)^2(\sigma_2\sigma_1) [1,3]^{s_1+r_2-3}$ is a representative, 
which is equivalent to $[1,2]^{r_3-r_2+2}    \sigma_1\sigma_2 \sigma_1^2 [1,3]^{s_1+r_2-3}$.

The case when $r_1 < s_3$ gives rise to a dual form with $\overline{r_1} < \overline{s_3}$,
in which case the prior analysis applies to the dual form.
Thus the 3-tier T-links of the form $T((r_1, s_1), (r_2, s_2), (r_3, s_3))$ with braid index three 
always have a representation as $\widehat{ \sigma_1^{x} \sigma_2^y [1,3]^{z} }$
with $z \equiv 0 \mbox{ mod } 3$.
The Jones polynomial for case~\ref{Vfor3tierAsymmetrics1is1mod3r2is1mod3}
is described by (\ref{Jones3brfulltwists1x2y3a}) and its
coefficient vector is described in Section~\ref{SSCoeffVectorszis0mod3}.
The following result summarizes all the true 3-tier forms, item
\ref{Vfor3tierAsymmetrics1is1mod3r2is1mod3}, that represent the same link.

\begin{prop}
The following is an exhaustive list of the 3-tier \mbox{T-links} with $r_1 \neq s_3$ and
braid index 3 which have no 2-tier representation.
These have a braid representation $\sigma_1^{x} \sigma_2^y [1, 3]^{z} $, or
$\sigma_1^{y} \sigma_2^x [1, 3]^{z} $, 
where $x,y \geq 2$ and $z \equiv 0 \mbox{ mod } 3$ with $z \geq 3$.
In fact, $\min(x,y)=2$, and $3 \leq \max(x,y)=M$. 
For any choice of $r_2 \equiv 1 \mbox{ mod } 3$ with $4 \leq r_2 \leq z+1$ we have:

\begin{enumerate}
\item $T((2, M-2), (3, r_2-3), (z+5-r_2, 3))$, or
\item dual to prior item: $T((3, z+2-r_2), (r_2, 1), (M+r_2-2, 2))$.
\end{enumerate}
\label{prop3tierAsymdups}

Note that these are a subset of the links described by Prop.~\ref{prop3tierSymdups}.
\end{prop}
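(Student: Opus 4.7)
The plan is to collect and tabulate the outcomes of the case analysis carried out in the paragraphs immediately preceding this proposition, and then to verify the parameter ranges and the duality assertion by direct calculation.

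First I would recall from that analysis that for $r_1 \neq s_3$, the only sub-case in which $T((r_1,s_1),(r_2,s_2),(r_3,s_3))$ has braid index three without collapsing to a 2-tier T-link (through one of the reductions itemised just before the proposition) is the regime forced by $i_0=1$, $r_1 = \overline{r_2}$, i.e.\ $s_3 = 2$, $s_2 = 1$, $r_1 = 3$, together with $s_1 \equiv 1 \mbox{ mod } 3$ and $r_2 \equiv 1 \mbox{ mod } 3$ (case~\ref{Vfor3tierAsymmetrics1is1mod3r2is1mod3}). The derived braid word is $\sigma_1^{r_3-r_2+2}\sigma_2^{2}[1,3]^{s_1+r_2-2}$, so letting $x := r_3-r_2+2$, $y := 2$ and $z := s_1+r_2-2$ produces a representation of the stated form. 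All other residue combinations for $(s_1, r_2) \bmod 3$ were explicitly shown earlier to give 2-tier collapses and are therefore excluded.

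Next I would verify the parameter constraints. The equality $y = 2$ gives $\min(x,y) = 2$, and since $r_3 > r_2$ for a genuine 3-tier T-link one obtains $M = x = r_3-r_2+2 \geq 3$. The congruence $r_2 \equiv 1 \mbox{ mod } 3$ together with $r_2 > r_1 = 3$ forces $r_2 \geq 4$, while $s_1 \geq 1$ yields $r_2 \leq z+1$, and $s_1 \equiv r_2 \equiv 1 \mbox{ mod } 3$ gives $z \equiv 0 \mbox{ mod } 3$. Solving back for the original parameters gives $s_1 = z+2-r_2$ and $r_3 = M+r_2-2$, producing item~2 directly. Plugging these into the duality formula $T((r_1,s_1),(r_2,s_2),(r_3,s_3)) = T((s_3, r_3-r_2), (s_2+s_3, r_2-r_1), (s_1+s_2+s_3, r_1))$ yields $T((2, M-2), (3, r_2-3), (z+5-r_2, 3))$, which is item~1. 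For the subset claim, the link in question has braid representative $\widehat{\sigma_1^M \sigma_2^2 [1,3]^z}$; substituting $(x,y) \mapsto (M,2)$ into Prop.~\ref{prop3tierSymdups} item~1 produces $T((2,M),(3,z-2),(5,2))$, the symmetric 3-tier representation of the same link already obtained from case~\ref{Vfor3tiersymmetrics2is1mod3} of the symmetric analysis, so every link listed here also appears in Prop.~\ref{prop3tierSymdups}.

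The main obstacle I anticipate is bookkeeping rather than any deep step: matching the residues modulo 3, confirming the sharpness of the ranges $r_2 \in [4, z+1]$, $M \geq 3$, $z \geq 3$, and ensuring that no further combination sneaks in an unnoticed 2-tier collapse. One should also verify that distinct values of $r_2$ in the asserted range genuinely represent the same link for fixed $(M, z)$; this is immediate from Thm.~\ref{ThmJonesCompleteInv} applied to $\widehat{\sigma_1^M \sigma_2^2 [1,3]^z}$, which shows the link is determined by $M$ and $z$ alone.
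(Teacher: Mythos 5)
Your proposal is correct and follows essentially the same route as the paper: the proposition is stated as a summary of the case analysis in the immediately preceding paragraphs (the reduction to $s_3=2$, $s_2=1$, $r_1=3$ with $s_1\equiv r_2\equiv 1 \bmod 3$ yielding $\sigma_1^{r_3-r_2+2}\sigma_2^2[1,3]^{s_1+r_2-2}$), and your substitutions $x=r_3-r_2+2$, $y=2$, $z=s_1+r_2-2$, the range checks, and the duality computation match what the paper does. The appeal to Thm.~\ref{ThmJonesCompleteInv} for the independence from $r_2$ is harmless but unnecessary, since the reduced braid word already depends only on $(M,z)$.
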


%%%%%%%%%%%%%%%%%%%%%%%%%%%%%%%%%  4-tier T-links with braid index 3 %%%%%%%%%%%%%%%%%
%%%%%%%%%%%%%%%%%%%%%%%%%%%%%%%%%  4-tier T-links with braid index 3 %%%%%%%%%%%%%%%%%

\subsection{The Jones polynomial for 4-tier T-links with braid index three}
Cor.~\ref{cormaxtiers} shows that $T((2, s_1), (3, s_2), (r_3, 1),  (r_4, 2))$
is the general form for 4-tier \mbox{T-links} with braid index 3. 
%with dual form $T((2, r_4-r_3), (3, r_3-3), (s_4+ s_3+s_2, 1), (s_4+s_3+s_2+s_1, 2))$, \cite{12}.
Proposition~\ref{PropBraidRel} tells us this has a representation as 
%$[1,2]^{s_1} [1,3]^{s_2} [1,r_3] [1,r_4]^2$.
$[1,2]^{s_1} [1,3]^{s_2} [1,r_3] [2,1]^{r_4-r_3} [1,r_3]^2=$ 
$[1,2]^{s_1} [1,3]^{s_2} \sigma_2^{r_4-r_3} [1,r_3]^3$.
Invoke Prop.~\ref{PropBraidRel} again to obtain $[1,2]^{s_1} [1,3]^{s_2} \sigma_2^{r_4-r_3} [3,1]^{r_3-3} [1,3]^3$,
and this may be rewritten as 
$\sigma_1^{s_1} [1,3]^{[s_2]_3} \sigma_2^{r_4-r_3} [3,1]^{[r_3]_3} [1,3]^{s_2-[s_2]_3+r_3-[r_3]_3}$.

We have the following representations when $s_2 \equiv 0 \mbox{ mod } 3$:
\begin{enumerate}
\item $\sigma_1^{s_1} \sigma_2^{r_4-r_3} [1,3]^{s_2 +r_3}$,
when $r_3 \equiv 0 \mbox{ mod } 3$,
   \begin{enumerate}
   \item this is $T((2,r_4-r_3-1),(3,s_2 +r_3+1))$ when $s_1=1$,
   \item this is $T((2,s_1-1),(3,s_2 +r_3+1))$ when $r_4-r_3=1$,
   \end{enumerate}
\item $\sigma_1^{s_1+1} \sigma_2^{r_4-r_3+1} [1,3]^{s_2 +r_3-1}$,
when $r_3 \equiv 1 \mbox{ mod } 3$,
\item $\sigma_1^{s_1 +r_4-r_3+3}  \sigma_2  [1,3]^{s_2 +r_3-2}$, i.e. $T((2,s_1 +r_4-r_3+2), (3,s_2 +r_3-1))$
when $r_3 \equiv 2 \mbox{ mod } 3$.
\end{enumerate}
To see the result when $r_3 \equiv 2 \mbox{ mod } 3$, note that $\sigma_1^{s_1} \sigma_2^{r_4-r_3} [3,1]^{2} [1,3]^{s_2 +r_3-2}$
is just %$\sigma_1^{s_1} \sigma_2^{r_4-r_3} \sigma_1 \sigma_2\sigma_1 \sigma_1 [1,3]^{s_2 +r_3-2}$.
$\sigma_1^{s_1} (\sigma_1 \sigma_2 \sigma_1^{r_4-r_3})  \sigma_1 \sigma_1 [1,3]^{s_2 +r_3-2}$.

\medskip

We have the following representations when $s_2 \equiv 1 \mbox{ mod } 3$:
\begin{enumerate}
\item $\sigma_1^{s_1+1} \sigma_2^{r_4-r_3+1} [1,3]^{s_2-1+r_3}$,
when $r_3 \equiv 0 \mbox{ mod } 3$,
\item $\sigma_1^{s_1+2}   \sigma_2^{r_4-r_3+2} [1,3]^{s_2-1+r_3-1}$,
when $r_3 \equiv 1 \mbox{ mod } 3$,
\item $\sigma_1^{s_1+r_4-r_3+5} \sigma_2 [1,3]^{s_2-1+r_3-2}$, i.e. $T((2,s_1 +r_4-r_3+4), (3,s_2 +r_3-2))$
when $r_3 \equiv 2 \mbox{ mod } 3$.
\end{enumerate}

We have the following representations when $s_2 \equiv 2 \mbox{ mod } 3$:
\begin{enumerate}
\item $\sigma_1^{s_1+r_4-r_3+3} \sigma_2 [1,3]^{s_2-2+r_3}$, $T((2,s_1 +r_4-r_3+2), (3,s_2 +r_3-1))$
when $r_3 \equiv 0 \mbox{ mod } 3$,
\item $\sigma_1^{s_1+r_4-r_3+5} \sigma_2 [1,3]^{s_2-2+r_3-1}$, i.e. $T((2,s_1 +r_4-r_3+4), (3,s_2 +r_3-2))$
when $r_3 \equiv 1 \mbox{ mod } 3$,
\item $\sigma_1^{s_1 +r_4-r_3+2} [1,3]^{3+s_2-2+r_3-2}$, i.e. $T((2,s_1 +r_4-r_3+2),(3,s_2+r_3-1))$
when $r_3 \equiv 2 \mbox{ mod } 3$.
\end{enumerate}

The Jones polynomial for cases which are not identified as two-tier \mbox{T-links} are
described by (\ref{Jones3brfulltwists1x2y3a}) and the
coefficient vector is described in Section~\ref{SSCoeffVectorszis0mod3}.
The prior calculations are summarized next.
Recall that $\overline{s_{1}} = r_{k}-r_{k-1}$.

\begin{prop}
A 4-tier \mbox{T-link} with braid index 3 has a 2-tier or 3-tier form.

%If $L_1$ and $L_2$ are 4-tier \mbox{T-links} with braid index 3 with
Suppose $L_i = T((2, s_{1,i} ), (3, s_{2,i} ), (r_{3,i}, 1),  (r_{4,i} , 2))$ for $i=1,2$ and
$\{ s_{1,1}, \overline{s_{1,1}} \} = \{ s_{1,2}, \overline{s_{1,2}} \}$
and $\{ [s_{2,1}]_3, [r_{3,1}]_3 \} = \{ [s_{2,2}]_3, [r_{3,2}]_3 \}$
and $ s_{2,1} + r_{3,1} = s_{2,2} + r_{3,2} $.

It follows that $L_1=L_2$.
\label{prop4tierdups}
\end{prop}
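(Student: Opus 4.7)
The plan is to establish both claims by inspection of the case analysis immediately preceding Prop.~\ref{prop4tierdups}, which exhibits, for every residue pair $([s_2]_3,[r_3]_3)$, a braid representation of $L=T((2,s_1),(3,s_2),(r_3,1),(r_4,2))$ either in the form $\widehat{\sigma_1^{x}\sigma_2^{y}[1,3]^{z}}$ with $z\equiv 0\mbox{ mod }3$, or as an explicit two-tier T-link. The first claim is then immediate: every four-tier T-link with braid index three admits a two-tier or three-tier form, because any $\widehat{\sigma_{1}^{x}\sigma_{2}^{y}[1,3]^{z}}$ is a T-link of at most three tiers by the discussion of Section~\ref{SS3tier} (cf.\ Prop.~\ref{prop3tierSymdups} and Prop.~\ref{prop3tierAsymdups}).

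For the second claim, set $S = s_1 + (r_4-r_3)$ and $T = s_2 + r_3$; both are invariants under the stated hypotheses. I would tabulate, for each of the nine residue pairs $([s_2]_3,[r_3]_3)$, the triple $(x,y,z)$ (or the pair of two-tier parameters) produced by the preceding case analysis. The crucial observation is that the resulting template depends only on the unordered pair $\{[s_2]_3,[r_3]_3\}$: the pairs $(0,1)$ and $(1,0)$ both yield $(s_1+1,\,r_4-r_3+1,\,T-1)$; the pairs $(0,2)$, $(2,0)$, and $(2,2)$ all yield the two-tier link $T((2,S+2),(3,T-1))$; and $(1,2)$ and $(2,1)$ both yield $T((2,S+4),(3,T-2))$. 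The diagonal cases $(0,0)$ and $(1,1)$ give $(s_1,\,r_4-r_3,\,T)$ and $(s_1+2,\,r_4-r_3+2,\,T-2)$, respectively, each of which depends only on the multiset $\{s_1,r_4-r_3\}$ and on $T$.

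Under $\{[s_{2,1}]_3,[r_{3,1}]_3\}=\{[s_{2,2}]_3,[r_{3,2}]_3\}$ the two links $L_1,L_2$ fall into the same template; under $s_{2,1}+r_{3,1}=s_{2,2}+r_{3,2}$ the value of $z$ (or the two-tier parameters) is shared; and under $\{s_{1,1},\overline{s_{1,1}}\}=\{s_{1,2},\overline{s_{1,2}}\}$ the unordered pair $\{x,y\}$ coincides. For the three-tier outputs, Prop.~\ref{propJonesUniqueTwistedTorusSum3a} together with braid reflection (which swaps the roles of $\sigma_1$ and $\sigma_2$ and hence swaps $x$ and $y$) yields $L_1=L_2$; the two-tier outputs are settled by direct comparison, since all parameters of the two-tier form are determined by $S$ and $T$ alone.

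The main obstacle is not conceptual but organizational: one must check that in each of the three mixed residue cases $\{0,1\}$, $\{0,2\}$, $\{1,2\}$, swapping which of $s_2$ or $r_3$ carries which residue genuinely produces the same braid-word template. Once this symmetry in the case table is verified, the conclusion reduces to the already-established equivalence $\sigma_1^x\sigma_2^y[1,3]^z \cong \sigma_1^y\sigma_2^x[1,3]^z$ and the uniqueness statement of Prop.~\ref{propJonesUniqueTwistedTorusSum3a}.
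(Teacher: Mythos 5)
Your proposal is correct and follows essentially the same route as the paper, which proves this proposition implicitly by the case analysis immediately preceding it ("The prior calculations are summarized next"): the nine residue cases collapse to templates depending only on the unordered pair $\{[s_2]_3,[r_3]_3\}$, on $S=s_1+(r_4-r_3)$ as a multiset $\{s_1,\overline{s_1}\}$, and on $T=s_2+r_3$, with the swapped-exponent case handled by centrality of $[1,3]^z$ and braid reflection as in Prop.~\ref{propJonesUniqueTwistedTorusSum3a}. Your tabulated templates match the paper's list exactly, so no gap remains.
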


\subsection{Classification result for three-braid T-links}
The results in the prior sections may be combined to give us the following result,
which extends the classification result for knots in $\mathcal{L}_3$
by R.~Bedient, \cite{4}.
\begin{thm}
Suppose $\beta(x,y,z)=\sigma_1^x \sigma_2^y [1,3]^z$ with $z \equiv 0 \mbox{ mod } 3$, and $z \geq 3$ and $x\geq y \geq 0$.

If $L$ is a \mbox{T-link} of braid index three, then it has a unique representation as some $\beta(x,y,z)$ and
$L$ has a \mbox{T-link} representation with three or fewer tiers:
\begin{enumerate}
\item when $x=y=0$, we have $L=T(3,z)$,
\item when $x>y=0$, we have $L=T((2,x),(3,z))$, with no torus links present,
\item when $x=y=1$, we have $L=T(3,z+1)$,
\item when $x=3$ and $y=1$, we have $L=T(3,z+2)$,
\item when $x >y=1$, we have $L=T((2,x-1),(3,z+1))$, with no torus links present for $x \neq 3$,
\item when $x=y=2$, we have $L = T((3,z-2),(4,3))$, which is not a torus link,
\item when $x,y \geq 2$, we have $L=T((2,x-1),(3,z-1),(2+y,2))$, with no torus links present. 
The only 2-tier \mbox{T-link} present is when $x=y=2$.
\end{enumerate}
The closure of any such braid word, $\beta(x,y,z)$, is a \mbox{T-link} of braid index three.
\label{thmTlinkbr3}
\end{thm}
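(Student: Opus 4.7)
The plan is to assemble the classification from the tier-by-tier analyses carried out earlier in Section~\ref{SectionTlinks}, combined with the Jones-polynomial uniqueness results of Section~\ref{SSTwist3}. First I would establish existence of a $\beta(x,y,z)$ representation for every T-link $L$ of braid index three. By Cor.~\ref{cormaxtiers}, such a link has at most four tiers, so I can argue by tier count. The one-tier torus links $T(3,m)$ map to $\beta(0,0,m)$, $\beta(1,1,m-1)$, or $\beta(3,1,m-2)$ according to $m \mbox{ mod } 3$, using the identifications already recorded for $T((3,y),(n,3))$. The five two-tier families are each handled in Section~4.1 via Prop.~\ref{PropBraidRel}; the three-tier case is handled in Section~\ref{SS3tier}; and the four-tier case preceding Prop.~\ref{prop4tierdups} likewise reduces to $\beta(x,y,z)$, which simultaneously proves the ``three or fewer tiers'' clause. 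Throughout, the centrality of $[1,3]^3 \in B_3$ and braid reflection are used to shift $z$ into the target range $z \equiv 0 \mbox{ mod } 3$, $z \geq 3$, and to enforce $x \geq y$.

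For uniqueness of the normalized triple, I would invoke Theorem~\ref{ThmJonesCompleteInv}, which states that the Jones polynomial is a complete invariant on the family $\{\widehat{\beta(x,y,z)}\}$. Combined with Props.~\ref{propJonesUniqueTwistedTorusSum3a}--\ref{propJonesUniqueTwistedTorusSum3aplus2}, this implies that two normalized triples $(x,y,z)$ yield the same link precisely when they coincide. The classification into items~1--7 is then a matter of translating the output of the existence step back into T-link notation, with Theorem~\ref{ThmJonesCompleteInv} identifying exactly which patterns $(x,y)$ produce a torus link.

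The converse direction---that every admissible $\beta(x,y,z)$ closes to a T-link of braid index three---follows because each item of the classification exhibits a T-link form. The braid index is at most three by construction, and at least three since the Jones polynomial of a two-braid link has the compact form~(\ref{Vfor2brUsingA}), which cannot match the block structure displayed in Cor.~\ref{CorJones3brfulltwists} when $z \geq 3$.

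The main obstacle will be the bookkeeping in the classification step: several $\beta$-triples produce torus links whose Jones polynomials coincide with those of other, simpler parameter patterns, so one must cross-reference Props.~\ref{prop2tierTlinks}, \ref{prop3tierSymdups}, \ref{prop3tierAsymdups}, and \ref{prop4tierdups} to confirm that no braid-index-three T-link is missed and no link is counted twice. In particular the torus subcases $x=y=1$ and $x=3,y=1$ overlap awkwardly with the generic two-tier pattern $x>y=1$, and item~7 (three-tier forms) must be verified to exhaust the non-torus cases when $x,y \geq 2$.
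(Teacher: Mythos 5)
Your proposal is correct and follows essentially the same route as the paper, which offers no separate proof but simply assembles Cor.~\ref{cormaxtiers}, the tier-by-tier reductions of Sections~4.1--4.3, and the Jones-polynomial uniqueness results (Props.~\ref{propJonesUniqueTwistedTorusSum3a}--\ref{propJonesUniqueTwistedTorusSum3aplus2} and Thm.~\ref{ThmJonesCompleteInv}) exactly as you describe. The only cosmetic difference is your argument that the braid index is at least three via the shape of the two-braid Jones polynomial, where the paper implicitly relies on the braid-index formula of Cor.~8 of \cite{12} applied to the exhibited \mbox{T-link} forms; either works.
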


Note that we cannot allow $z=0$, since such a braid word represents the connected sum of two 
elementary torus links, $T_x \sharp T_y$, while Cor.~1, \cite{12}, indicates that \mbox{T-links} are prime,
and of course in this setting $x \neq 1$ and $y \neq 1$.

It remains open to find what characteristics of a general \mbox{T-link} lead to a Jones polynomial 
with coefficients which are both ''sparse'' and ''low'' in value. ''Sparse'' would refer to 
relatively few non-zero values within each block, as \cite{22} shows that the blocks separate as 
the number of full twists increases.
It would be interesting to
know when the blocks in the coefficient vector are disjoint, what is their length, 
when are the inter-block coefficients non-zero when full twists on an even number of strands are added,
and what is their value, even if the full Jones polynomial could not be calculated.

The family of Lorenz links of braid index three has the interesting property that
the Jones polynomial distinguishes distinct links within this family.
This is not the general behavior across Lorenz links as there are 
distinct Lorenz knots that have the same Jones polynomial (see \cite{12}).
It was shown in Prop.~2.25, \cite{25}, that the Jones polynomial distinguishes distinct
connected sums of elementary torus links.

\begin{rem}
It would be interesting to know whether the Jones polynomial distinguishes distinct links within the family
$\{ \widehat{ [1,n]^z \prod_{i=1}^{n-1} \sigma_i^{x_i}  }  : 
z \equiv 0 \mbox{ mod } n,\, \mbox{and } z,\,x_i \geq 0 \mbox{ for all } i\}$, as this 
is true for $z=0$, or $n=2, 3$.
\end{rem}

\begin{rem}
Cor.~\ref{cormaxtiers} shows that \mbox{T-links} with braid index $b$ must have $2b-2$ or fewer tiers.
However for both $b=2, 3$ the \mbox{T-link} with the maximum number of tiers, 
$T((2,s_1), (3,s_2),\ldots, (b,s_{b-1}), (r_{b},1),\ldots, (r_{2 b-3},1),(r_{2 b-2},2))$,
has a representation with fewer tiers.
It would be interesting to establish whether this is always true. A related question is 
whether there is always some link that has a representation with $2b-3$ tiers but no fewer tiers suffice
as this is true for $b \leq 3$.
\end{rem}

\section*{Acknowledgments}
%This section should come before the References. Funding 
%information may also be included here.
The author would like to thank J.~Birman for her suggestion to investigate the
Jones polynomial of Lorenz links, which leads to many other interesting questions.
I.~Kofman made the helpful suggestion to look at R.~Bedient's paper, \cite{4}.

\end{document}